\theoremstyle{plain}
\newtheorem{theorem}{Theorem}
\newtheorem{proposition}[theorem]{Proposition}
\newtheorem{corollary}[theorem]{Corollary}
\newtheorem{lemma}[theorem]{Lemma}
\theoremstyle{remark}
\newtheorem{example}[theorem]{Example}
\newtheorem{remark}[theorem]{Remark}
\newcommand{\NN}{\mathbb{N}}
\newcommand{\RR}{\mathbb{R}}
\newcommand{\EE}{\mathbb{E}}
\newcommand{\PP}{\mathbb{P}}
\newcommand{\Var}{\mathrm{Var}}
\newcommand{\Cov}{\mathrm{Cov}}
\DeclareMathOperator{\argmin}{argmin}
\newcommand{\inv}{\ensuremath{\leftarrow}}
	\title{Estimation of the Spectral Measure from Convex Combinations of 
			Regularly Varying Random Vectors}
		\author{Marco Oesting\\
		Stuttgart Center for Simulation Science and Institute of Stochastics and Applications,\\
		University of Stuttgart, 70569 Stuttgart, Germany,\\
				\texttt{marco.oesting@mathematik.uni-stuttgart.de}\\
		\And
Olivier Wintenberger\\
LPSM,
Sorbonne Universit\'e,\\ 4 place Jussieu, 75005 Paris, France\\
Wolfgang Pauli Institut, c/o Fakult\"at f\"ur Mathematik,\\
	Universit\"at Wien, 1090 Vienna, Austria\\
	\texttt{ olivier.wintenberger@sorbonne-universite.fr}\\ }
\begin{document}

\maketitle

\begin{abstract}
The extremal dependence structure of a regularly varying random vector $\bm X$
is fully described by its limiting spectral measure. In this paper, we investigate how to recover characteristics of the measure, such as extremal coefficients, from the extremal behaviour of convex combinations of components of $\bm X$. Our considerations result in a class of new estimators of moments of the corresponding combinations for the spectral vector. We show asymptotic normality by means of a functional limit theorem and, focusing on the estimation of extremal coefficients, we verify that the minimal asymptotic variance can be achieved by a plug-in estimator using subsampling bootstrap. We illustrate the benefits of our approach on simulated and real data.
\end{abstract}


\begin{keywords}
{Asymptotic statistics; extremal coefficient; moment estimator; multivariate extremes.}
\end{keywords}

\section{Introduction}

One of the main challenges in multivariate extreme value analysis is the inference of the dependence among the extremes of a random vector $\bm X = (X_1,\ldots,X_d)^\top$ from a sample of independent copies $\bm X_1,\ldots,\bm X_n$. In risk management applications, particular interest lies in the case that $\bm X$ possesses asymptotically dependent components, i.e.\ extremes tend to appear jointly in various components, with heavy tails. Many of these cases are covered by the framework of multivariate regular variation. We fix an orthant where the extremes occur, and restrict ourselves to the first one  with no loss of generality; thus, we consider $\bm X = (X_1,\ldots,X_d)^\top \in [0,\infty)^d$.

It is common practice in extreme value analysis to consider the extremal dependence structure of $\bm X$ separately from the marginal properties of the components $X_1,\ldots, X_d$. Thus, focus is often put on the joint tail behaviour of some standardized vector $\bm X^*$ obtained from $\bm X$ via marginal transformations. More precisely, the margins are standardized such that
the tails are regularly varying with index $1$ and asymptotically identical in the sense that 
$$ \lim_{x \to \infty} \frac{\PP( X_i^* > x)}{\PP(X_j^* > x)} = 1 \quad \text{for all } i,j \in \{1,\ldots,d\}. $$
By such a transformation, the property of multivariate regular variation is inherited from $\bm X$ to $\bm X^*$. Thus, extremal dependence of $\bm X^*$ can be fully described via its so-called spectral measure, that is, the limit distribution of the angular component $\bm X^* / \|\bm X^*\|_\infty$ conditional on $\| \bm X^*\|_\infty > u$ as $u \to \infty$. We denote the vector distributed as the spectral measure by $\Theta$.

In the past decades, the estimation of the spectral measure and related concepts has attracted a lot of attention, starting from  the bivariate setting ($d=2$), where estimation of the spectral measure is equivalent to inference for the so-called Pickands' dependence function. Here, seminal estimation procedures come from  \cite{pickands1975statistical} with refinements such as \cite{caperaa-fougeres-genest-1997} using a copula approach assuming known margins. These procedures have been extended  by \cite{genest-segers-2009} to the setting with unknown margins of $\bm X$ using the rank transform. All the above mentioned works were assuming that the observations are max-stable, i.e.~that the asymptotic extreme regime is achieved, a typical
assumption for observations that correspond to maxima over certain temporal blocks such as annual maxima. The pioneer works of \cite{einmahl-dehaan-huang-1993} and \cite{einmahl-dehaan-sinha-1997} deal with the estimation of the cumulative distribution function of the spectral measure with observations in the domain of attraction of max-stable distribution, including the case of regularly varying distributions. The two-step procedure in \cite{einmahl-dehaan-sinha-1997} for dealing with unknown margins is replaced by a more efficient rank transform in \cite{einmahl-dehaan-piterbarg-2001}.  An alternative approach due to \cite{drees1998best} focuses on the estimation of the stable tail dependence function
$$
L(\bm x) = \EE\Big[\max_{i=1,\ldots,d} x_i\Theta_i\Big]\,, \qquad \bm x = (x_1,\ldots,x_d) \in (0,\infty)^d\,,
$$
using the rank transform for $d=2$. This work was extended to the multivariate case $d>2$ in \cite{einmahl-krajina-segers-2012}. We refer to Chapter 7 of the monograph \cite{dehaan-ferreira-2006} for a general overview of non-parametric approaches.

When $d$ is large, any inference of the complete spectral distribution suffers from the curse of dimensionality. Thus, it is more reasonable in practice to focus on the joint tail behaviour of the components of  lower-dimensional subvectors $\bm X^*_I = (X^*_i)_{i \in I}$ for sufficiently many ``directions'' $I$ that are non-empty subsets of $\{1,\ldots,d\}$. In this framework, one often makes use of summary statistics such as the so-called extremal coefficients \citep[cf.][]{smith90, schlather-tawn-2003}
given by 
\begin{equation} \label{eq:def-tau-I}
\tau_I = \lim_{u\to \infty}\dfrac{\PP(\max_{i \in I} X^*_i > u)}{\PP(X^*_j > u)}, \qquad j\in I\,,
\end{equation}
a number between $1$ and $|I|$. Such parameter has attracted a lot of attention as it quantifies the effective number of asymptotically independent components among $I$
\citep[cf.][]{schlather-tawn-2003}.  Some properties, such as self-consistency for sets of extremal coefficients, and various procedures for estimating  extremal coefficients were introduced by \cite{schlather-tawn-2003}. For max-stable random fields, \cite{cooley-naveau-2006} showed that inference can also be based on the madogram, a graphical tool originally developed in geostatistics.\\
In this paper, we also consider the extremal dependence structure of subvectors $\bm X_I^*$ described in terms of the corresponding  spectral vector $\bm \Theta^I$, that is, the limit in distribution of $\bm X^*_I / \|\bm X_I^*\|_\infty$ conditional on  $\ell_I(\bm X^*) := \|\bm X^*_I\|_\infty > u$ as $u \to \infty$. While existing works typically focus on a direct estimation of the distribution of $\bm \Theta^I$, i.e.\ the spectral measure, we focus on moment estimators of $\bm \Theta^I$. Our approach further differs from previous works since we impose the assumption that the original observations $\bm X$ is such that $\bm X^* = (r_1^{-1} X_1^{\alpha_1}, \ldots,  r_d^{-1} X_d^{\alpha_d})$ is standard regularly varying with index $1$ and with tail equivalent margins. This restriction allows us to consider a two-step procedure in the spirit of \cite{einmahl-dehaan-sinha-1997} with a simpler first standardization step relying on the componentwise normalization via upper order statistics and the estimation of the coefficients $\alpha_1,\ldots,\alpha_d > 0$ via Hill type estimators. Our second step is then based on the estimation of moments of convex combinations of the spectral vector, i.e.\ estimators of $\EE[(\bm v^\top \bm \Theta^I)^p]$ with $p \geq 1$ and $\bm v$ on the simplex of the direction $I$. We prove that the collection of such moments $\EE[(\bm v^\top \bm \Theta^I)^p]$, $p \geq 1$, characterizes the spectral measure of $\bm X^*_I$. Perceiving the resulting estimators as functions of the vector $\bm v$ on the simplex, we obtain functional central limit theorems with precise asymptotic (co-)variances. Here, we consider both the case that the margins of $\bm X$ are known, i.e.\ we can directly work with exactly standardized observations $\bm X^*$, and the case that the margins are unknown 
and a preliminary empirical standardization step has to be included.\\
In accordance to the above-mentioned paradigm that, in large dimensions $d$, emphasis is often put on simpler summary statistics, we suggest to mainly focus on small $p$ for which our approach provides simple functions that contain important information on the dependence among extremes. For instance, by tail equivalence of the marginals of $\bm X^*$ we have that $\EE[\Theta_i^I]=\EE[\Theta_j^I]=1/\tau_I$ for any $i,j\in I$ and, consequently, for any positive weights $\bm v$ such that $\sum_{i\in I}v_1=1$ we have that $\EE[\bm v^T \bm \Theta^I] = 1/\tau_I$. Thus, the moment of order $p=1$ of convex combinations of $\bm \Theta^I$ does not depend on the weight vector $\bm v$. This observation motivates an additional third step of our procedure. More precisely, similarly to 
\citet{mainik-rueschendorf-2010} who estimate the optimal portfolio of jointly regularly varying assets, we use the functional central limit theorem to estimate the weight vector $\bm v^*$ that minimizes the asymptotic variance and show that this minimal asymptotic variance can be achieved by a plug-in estimator. We note that the determination of $\bm v^*$ is interesting on its own and provides important information on the dependence among extremes.  The determination of a prominent direction in the data is related to recent developments on extreme PCA in \cite{cooley2019decompositions} and \cite{drees2019principal}. Moreover we discuss how the  weights $\bm v^*$ are impacted by the fact that the margins are known or not.

The paper is organized as follows. In Section \ref{sec:lin-comb}, we gather several results on convex combinations of the spectral components of a regularly varying vector. The asymptotic normality of empirical moment estimators is given in Section \ref{sec:asnorm} both for the case of known and the case of unknown margins. These results specified to first order moments are then applied in Section \ref{sec:extr-coeff} in order to obtain estimators for extremal coefficients with minimal asymptotic variance. Some comparison to benchmark estimators based on the definition of $\tau_I$ and some illustrations on simulated and real data are provided in Section \ref{sec:illustr}. We conclude with a short discussion on our results in Section \ref{sec:conclusion}.

\section{Extremes of convex combinations of regularly varying random vectors} 
\label{sec:lin-comb}

Let $\bm X = (X_1,\ldots,X_d)^\top$ be a non-standard regularly varying 
$E=[0,\infty)^d$-valued random vector, with different tail indices $\alpha_1,\ldots,\alpha_d > 0$. We assume the existence of scaling factors $r_1,\ldots,r_d > 0$ such   the transformed vector $\bm X^* = (r_1^{-1} X_1^{\alpha_1}, \ldots,  r_d^{-1} X_d^{\alpha_d})$ is  regularly varying with index $1$ and satisfies
\begin{equation} \label{eq:vagueconv-norm}
a^*(u) \PP( u^{-1} \bm X^* \in \cdot) \stackrel{v}{\longrightarrow} \mu^*(\cdot)\,, \qquad u \to \infty\,,
\end{equation}
for some normalized exponent measure $\mu^*$ which is homogeneous of order $-1$
and
\begin{equation} \label{eq:a-normal}
 \mu^*(\{\bm x \in E: \ x_i > 1\}) 
    = \lim_{u \to \infty} a^*(u) \PP(X_i^* > u) = 1, 
\end{equation}    
i.e.\ $a^*(u) \sim \PP(X_i^*>u)^{-1}$ for all $i=1,\ldots,d$. This verifies that the tail behaviour of 
all the components of $\bm X^*$ is asymptotically identical, i.e.\ its marginal distribution functions
$F_1^*, \ldots, F_d^*$ satisfy
$$ \frac{1-F_i^*(u)}{1-F_j^*(u)} \to 1, \qquad u \to \infty,$$
for all $i,j \in \{1,\ldots,d\}$, which also implies
\begin{equation} \label{eq:quantile-equivalence}
 \frac{(F_i^*)^{-1}(p)}{(F_j^*)^{-1}(p)} \to 1, \qquad p \to 1.
 \end{equation}

Eq.~\eqref{eq:vagueconv-norm} can be equivalently expressed in terms of a spectral component for an
arbitrary norm $\|\cdot\|$ on $\RR^d$. More precisely, we obtain the weak convergence 
\begin{equation} \label{eq:rv}
\mathcal{L}(u^{-1} \bm X^* \mid \|\bm X^*\| > u) \stackrel{w}{\longrightarrow} \mathcal{L}(Y \bm \Theta)\,, \qquad u \to \infty\,,
\end{equation}
where $Y$ is a unit Pareto random variable and, independently from $Y$,
$\bm \Theta$ is a $[0,\infty)^d$-valued random vector satisfying 
$\|\bm \Theta\|=1$ almost surely, i.e.\
$$ \PP(\bm \Theta \in S_{d-1}^+) = 1, \quad \text{where } 
S_{d-1}^+ = \{ \bm x \in [0,\infty)^d: \ \|\bm x\| = 1 \}.$$
In the following, we will focus on the maximum norm $\|\cdot\|=\|\cdot\|_\infty$
which is not a serious restriction as all norms on $\RR^d$ are equivalent. 
\medskip

There is a one-to-one correspondence between the normalized exponent measure
$\mu^*$ and the distribution of $\bm \Theta$, the so-called (normalized)
\emph{spectral measure} (w.r.t.\ the norm $\|\cdot\|$) via
\begin{equation} \label{eq:exp-spec}
\mu^*(\{\bm x \in E: \ \|\bm x\| > r, \ \bm x / \|\bm x\| \in B\}) = \tau r^{-1} \PP(\bm \Theta \in B), \qquad r>0, \ B \subset S^+_{d-1}
\end{equation}
for some constant $1\le \tau \le d$ given by
\begin{equation} \label{eq:norm-const}
\tau = \mu^*(\{\bm x \in E: \, \|\bm x\| > 1\}) 
= \lim_{u \to \infty} a^*(u) \PP(\|\bm X^*\| > u) \,.
\end{equation}
The extremal dependence structure of the components of the normalized
random vector $\bm X^*$, and, thus, also of the original vector $\bm X$, is
fully described by the distribution of the spectral vector $\bm \Theta$
which we will estimate in the following.
\medskip

More precisely, we now describe the tail behaviour of convex combinations of $X^*_1, \ldots, X^*_d$ in terms of the corresponding spectral measure. To this end, we will make use of the following well-known lemma which is a direct consequence of the weak convergence in Eq.~\eqref{eq:rv}.

\begin{lemma} \label{lem:functions}
 Let $\bm X^*$, $Y$ and $\bm \Theta$ be as above. Furthermore, let 
 $g: (0,\infty) \times S_{d-1}^+ \to \RR$ be a continuous bounded function.
 Then, we have
 $$ \lim_{u \to \infty} 
 \EE \left[ g(u^{-1}\|\bm X^*\|, \bm X^* / \|\bm X^*\|) \mid \|\bm X^*\| > u \right] 
 = \EE[ g(Y, \bm \Theta) ]. $$
\end{lemma}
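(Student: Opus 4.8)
The plan is to recognise the expression inside the conditional expectation as a single bounded function evaluated at $u^{-1}\bm X^*$, and then to read off the limit from the weak convergence \eqref{eq:rv} by a continuous mapping argument. Let $T\colon E\setminus\{\bm 0\}\to(0,\infty)\times S_{d-1}^+$ be the polar decomposition $T(\bm x)=(\|\bm x\|,\bm x/\|\bm x\|)$, which is a homeomorphism. Since the angular component is invariant under multiplication by positive scalars, one has $T(u^{-1}\bm X^*)=(u^{-1}\|\bm X^*\|,\bm X^*/\|\bm X^*\|)$, hence $g(u^{-1}\|\bm X^*\|,\bm X^*/\|\bm X^*\|)=(g\circ T)(u^{-1}\bm X^*)$, and the left-hand side of the claimed identity equals $\EE[(g\circ T)(u^{-1}\bm X^*)\mid\|\bm X^*\|>u]$.

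The next step is to deal with the only delicate point, namely that $T$ is not defined, let alone continuous, at the origin. I would set $h\colon E\to\RR$ equal to $g\circ T$ on $E\setminus\{\bm 0\}$ and $h(\bm 0):=0$. Then $h$ is bounded (by $\sup|g|$), and its set of discontinuities is contained in $\{\bm 0\}$, since $g$ is continuous and $T$ is continuous on the open set $E\setminus\{\bm 0\}$. Because $\|Y\bm\Theta\|=Y\ge 1$ almost surely, the limit law $\mathcal L(Y\bm\Theta)$ puts no mass on $\{\bm 0\}$, so $h$ is continuous $\mathcal L(Y\bm\Theta)$-almost everywhere. By the portmanteau theorem (which guarantees convergence of integrals of bounded functions that are continuous almost everywhere under the limit law), the weak convergence \eqref{eq:rv} then yields $\EE[h(u^{-1}\bm X^*)\mid\|\bm X^*\|>u]\to\EE[h(Y\bm\Theta)]$ as $u\to\infty$. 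Equivalently, one may restrict the weak convergence in \eqref{eq:rv} to the closed set $\{\bm x\in E\colon\|\bm x\|\ge 1\}$, on which $g\circ T$ is genuinely bounded and continuous, and apply the continuous mapping theorem there.

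Finally I would identify the two sides. On the conditioning event $\{\|\bm X^*\|>u\}$ we have $u^{-1}\bm X^*\neq\bm 0$, so $h(u^{-1}\bm X^*)=g(u^{-1}\|\bm X^*\|,\bm X^*/\|\bm X^*\|)$; and since $\|\bm\Theta\|=1$ almost surely, $h(Y\bm\Theta)=g(\|Y\bm\Theta\|,Y\bm\Theta/\|Y\bm\Theta\|)=g(Y,\bm\Theta)$ almost surely. Substituting these into the previous limit gives the assertion. I expect the main (and essentially only non-routine) obstacle to be the treatment of the origin in the second paragraph: one must check that the singularity of the polar map there is invisible to the relevant limit law, after which everything else is bookkeeping around the change to polar coordinates and a standard invocation of weak convergence.
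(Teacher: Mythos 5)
Your argument is correct, and it is exactly the route the paper intends: the lemma is stated there without proof as a ``direct consequence of the weak convergence in \eqref{eq:rv}'', and your proof is the standard fleshing-out of that claim via the polar map and the a.e.-continuity version of the continuous mapping theorem. The one point needing care --- the singularity of $T$ at the origin --- you handle correctly by noting that $\|Y\bm\Theta\|=Y\ge 1$ a.s., so the limit law charges neither the origin nor the discontinuity set of the extended function.
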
	

In particular, the lemma implies that
\begin{equation} \label{eq:lim-theta}
 \lim_{u \to \infty} \EE[ \tilde g( \bm X^* / \|\bm X^*\|) \mid \|\bm X^*\| > u] 
 = \EE[\tilde g(\bm \Theta)]
\end{equation}
for every continuous function $\tilde g: S^+_{d-1} \to \RR$.
\medskip

Furthermore, the lemma extends to indicator functions of sets that are continuous with respect to the measure of $(Y,\bm\Theta)$, i.e.\ conditional
probabilities of events that can be expressed in terms of $u^{-1} \|\bm X^*\|$ 
and $\bm X^* / \|\bm X^*\|$. For instance, for convex combinations of $X^*_1,
\ldots, X^*_d$ with coefficient vector $\bm v \in B_1(\bm 0) = 
\{\bm x \in \RR^d: \ \|\bm x\|_1 \leq 1\}$, $\bm v \neq \bm 0$, we obtain
\begin{align} \label{eq:lin-limits}
 \lim_{u \to \infty} \PP( \bm v^\top \bm X^* > u \mid \|\bm X^*\| > u) 
 ={} \PP( Y \bm v^\top \bm \Theta > 1 )
 ={}& \EE\left[ (\bm v^\top \bm \Theta)_+ \wedge 1\right]
 = \EE\left[ (\bm v^\top \bm \Theta)_+\right]
\end{align}
using that $\bm v^\top \bm \Theta \leq \|\bm v\|_1 \cdot \|\bm \Theta\| = \|\bm v\|_1 \leq 1$ a.s. 
It can be shown that the law of $\bm \Theta$ is uniquely determined by the set 
of all the limits in \eqref{eq:lin-limits} with 
$\bm v \in \partial B_1(\bm 0) = \{\bm x \in \RR^d: \ \|\bm x\|_1 = 1\}$,
see \citet{basrak-davis-mikosch-2002,klueppelberg-pergamenchtchikov-2007,
	boman-lindskog-2009}. This fact motivates the use of extremes of convex combinations of observations 
for inference of the spectral measure. In this paper, we modify the above approach by assessing the limiting quantity 
on the RHS of Eq.~\eqref{eq:lin-limits} in an alternative way based on the 
following result which is a trivial consequence of Lemma \ref{lem:functions}.

\begin{lemma} \label{lem:functions2}
Let $\bm X^*$ be a regularly varying random vector with standard Pareto margins.
Then, for every $\bm v \in  \RR^d$ and $p \in \NN$, we have
 \begin{equation} \label{eq:all-limits}
  \EE\left[ \left(\bm v^\top \frac{\bm X^*}{\|\bm X^*\|}\right)_{+}^p  \, \bigg| \, \|\bm X^*\|>u \right] \longrightarrow \EE[ (\bm v^\top \bm \Theta)_{+}^p] \qquad (u \to \infty)\,.
 \end{equation} 
\end{lemma}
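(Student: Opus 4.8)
The plan is to deduce this from Lemma~\ref{lem:functions} by applying it to a function of the form $g(y,\bm\theta) = h(\bm v^\top \bm\theta)$, where $h(t) = (t_+)^p$, and then removing the boundedness restriction by a truncation argument. First I would observe that the map $\bm\theta \mapsto (\bm v^\top\bm\theta)_+^p$ is continuous on $S_{d-1}^+$ and, since $S_{d-1}^+$ is compact (it is a closed bounded subset of $[0,\infty)^d$ under $\|\cdot\|_\infty$), it is automatically bounded there, say by $M = M(\bm v, p) := (\max_{\bm\theta \in S_{d-1}^+} |\bm v^\top\bm\theta|)^p < \infty$. Hence $\tilde g(\bm\theta) := (\bm v^\top\bm\theta)_+^p$ is a continuous \emph{bounded} function on $S_{d-1}^+$, and the statement \eqref{eq:lim-theta} — which is the specialization of Lemma~\ref{lem:functions} to functions not depending on the radial coordinate — applies verbatim with $\tilde g$ in the role there. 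Here one should note that in \eqref{eq:all-limits} the argument of $\tilde g$ is $\bm X^*/\|\bm X^*\|_\infty$ (by our convention $\|\cdot\| = \|\cdot\|_\infty$), which is exactly the angular component appearing in \eqref{eq:lim-theta}.

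The only subtlety is that the excerpt states \eqref{eq:lim-theta} for continuous $\tilde g$ without explicitly restating the boundedness hypothesis from Lemma~\ref{lem:functions}; but since any continuous function on the compact set $S_{d-1}^+$ is bounded, there is nothing further to check, and \eqref{eq:all-limits} follows immediately by taking $\tilde g = (\bm v^\top \cdot)_+^p$. I would therefore write: fix $\bm v \in \RR^d$ and $p \in \NN$; the function $\tilde g : S_{d-1}^+ \to \RR$, $\tilde g(\bm\theta) = (\bm v^\top\bm\theta)_+^p$, is continuous on the compact set $S_{d-1}^+$, hence bounded; applying \eqref{eq:lim-theta} (equivalently Lemma~\ref{lem:functions}) to this $\tilde g$ yields precisely
\[
\EE\left[ \left(\bm v^\top \frac{\bm X^*}{\|\bm X^*\|_\infty}\right)_+^p \,\Big|\, \|\bm X^*\|_\infty > u \right] \longrightarrow \EE\big[(\bm v^\top \bm\Theta)_+^p\big], \qquad u \to \infty,
\]
which is \eqref{eq:all-limits}.

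There is essentially no obstacle here: the compactness of the unit sphere $S_{d-1}^+$ in any fixed norm does all the work of promoting "continuous" to "continuous and bounded", and the rest is a direct citation of the preceding lemma. If one wanted to be scrupulous about the case where $\tilde g$ in \eqref{eq:lim-theta} were only assumed continuous on a noncompact domain, one could instead go back to Lemma~\ref{lem:functions} with $g(y,\bm\theta) = h(\bm v^\top\bm\theta)$ where $h$ is the continuous bounded function agreeing with $t \mapsto (t_+)^p$ on the compact interval $[-\|\bm v\|_1, \|\bm v\|_1]$ (which contains all possible values of $\bm v^\top\bm\theta$ for $\bm\theta \in S_{d-1}^+$), so that $g$ is genuinely continuous and bounded on $(0,\infty)\times S_{d-1}^+$ and does not depend on its first argument; the conclusion then specializes to \eqref{eq:all-limits}. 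Either route is a one-paragraph argument, which is consistent with the paper calling this "a trivial consequence of Lemma~\ref{lem:functions}".
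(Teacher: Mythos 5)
Your argument is exactly the paper's proof: apply \eqref{eq:lim-theta} to the continuous function $\tilde g(\bm\theta)=(\bm v^\top\bm\theta)_+^p$ on $S_{d-1}^+$, with the (correct) extra remark that compactness of $S_{d-1}^+$ supplies the boundedness needed for Lemma~\ref{lem:functions}. Nothing is missing.
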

\begin{proof}
The result directly follows by applying Eq.~\eqref{eq:lim-theta} to the
continuous functions $\tilde g_{\bm v,p}(\theta)=(\bm v^\top \theta)_+^p$ with
$\bm v \in \RR^d$ and $p\in \NN$.
\end{proof}

Note that, for $\bm v \not \geq \bm 0$, the term $\bm v^\top \bm \Theta$ is negative with positive probability, which hampers the calculation of closed-form 
expressions for the RHS of \eqref{eq:all-limits}.
In order to avoid these difficulties, we restrict our attention to
non-negative vectors $\bm v \in  \partial B_1^+(\bm 0) 
= \partial B_1(\bm 0) \cap [0,\infty)^d$, i.e.\ we consider only convex
combinations of components of $\bm X^*$ with positive coefficients, while 
allowing for arbitrary powers of these combinations. This ensures the
identifiability of the spectral measure from the resulting limits as the 
following proposition shows.

\begin{proposition} \label{prop:limits}
 Let $\bm X^*$ be a non-negative regularly varying random vector with 
 index 1. Then, the spectral measure of $\bm \Theta$ is uniquely determined by the moments
 \begin{equation} \label{eq:set-all-limits}
  \left\{ \EE[ (\bm v^\top \bm \Theta)_{+}^p]: \ \bm v \in \partial B^+_1(\bm 0), \ p \in \NN \right\} = \left\{ \EE[ (\bm v^\top \bm \Theta)^p]: \ \bm v \in \partial B^+_1(\bm 0), \ p \in \NN \right\}.
 \end{equation}
\end{proposition}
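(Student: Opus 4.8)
The plan is to reduce the statement to the classical fact that a probability measure with compact support is characterised by its moments, after using the multinomial theorem to read off all mixed moments of $\bm\Theta$ from the given family.

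First I would dispose of the asserted equality of the two sets in \eqref{eq:set-all-limits}: for $\bm v\in\partial B_1^+(\bm 0)$ one has $\bm v\ge\bm 0$ and, almost surely, $\bm\Theta\ge\bm 0$, hence $\bm v^\top\bm\Theta\ge 0$ a.s. and the positive part $(\cdot)_+$ may be dropped. Then, for a fixed $p\in\NN$, I would expand by the multinomial theorem
$$\EE\big[(\bm v^\top\bm\Theta)^p\big]=\sum_{|\bm k|=p}\binom{p}{\bm k}\,\bm v^{\bm k}\,\EE\big[\bm\Theta^{\bm k}\big],\qquad\bm v\in\RR^d,$$
where $\bm k$ runs over multi-indices with $k_1+\cdots+k_d=p$, $\bm v^{\bm k}=\prod_i v_i^{k_i}$, $\bm\Theta^{\bm k}=\prod_i\Theta_i^{k_i}$, and $\binom{p}{\bm k}=p!/(k_1!\cdots k_d!)>0$ (the exchange of finite sum and expectation being harmless since $\bm\Theta$ is bounded). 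Thus $\bm v\mapsto\EE[(\bm v^\top\bm\Theta)^p]$ is the restriction to $\partial B_1^+(\bm 0)$ of the polynomial $P_p(\bm v):=\sum_{|\bm k|=p}\binom{p}{\bm k}\bm v^{\bm k}\EE[\bm\Theta^{\bm k}]$, which is homogeneous of degree $p$.

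The core step is then to recover, for every $p$, all coefficients of $P_p$ — equivalently all moments $\EE[\bm\Theta^{\bm k}]$ with $|\bm k|=p$ — from the knowledge of $P_p$ on the $(d-1)$-dimensional set $\partial B_1^+(\bm 0)$, in particular on its relative interior $\{\bm v>\bm 0:\|\bm v\|_1=1\}$. I would argue that homogeneity upgrades this to knowledge of $P_p(\bm w)=\|\bm w\|_1^p\,P_p(\bm w/\|\bm w\|_1)$ for every $\bm w\in(0,\infty)^d$; since polynomials agreeing on the nonempty open set $(0,\infty)^d$ are identical, all coefficients of $P_p$ are determined, and dividing by the positive constants $\binom{p}{\bm k}$ gives the moments $\EE[\bm\Theta^{\bm k}]$, $|\bm k|=p$. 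Letting $p$ vary yields the full collection of mixed moments of $\bm\Theta$. Finally, since $\bm\Theta$ is supported in $S_{d-1}^+\subset[0,1]^d$, its law is a probability measure on a compact set; by Stone--Weierstrass the polynomials are dense in $C([0,1]^d)$, so any two laws on $[0,1]^d$ with identical moments integrate every continuous function identically and hence coincide. Combining the two parts proves the proposition.

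I expect nothing genuinely hard here; the only step deserving attention is the passage from the values of $P_p$ on the simplex to all its coefficients, where one must exploit that homogeneity turns the $(d-1)$-dimensional simplex into the full open orthant before invoking the polynomial identity theorem. A slightly more analytic alternative, which I would mention in passing, bypasses the moment expansion: for each fixed $\bm v\in\partial B_1^+(\bm 0)$ the numbers $\EE[(\bm v^\top\bm\Theta)^p]$, $p\in\NN$, are the moments of the $[0,1]$-valued random variable $\bm v^\top\bm\Theta$ and hence, by the Hausdorff moment problem, determine its distribution; varying $\bm v$ and rescaling recovers the characteristic function of $\bm\Theta$ on $(0,\infty)^d$, which, being real-analytic because $\bm\Theta$ is bounded, is thereby determined on all of $\RR^d$, and this identifies the law of $\bm\Theta$.
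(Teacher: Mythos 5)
Your proof is correct, but it takes a different route from the paper's. The paper's argument is a one-line recovery of the Laplace transform: for $\bm s\in[0,\infty)^d$ it expands
\begin{equation*}
\EE[\exp(-\bm s^\top\bm\Theta)]=\sum_{p=0}^\infty(-1)^p\frac{\|\bm s\|_1^p}{p!}\,\EE\Big[\Big(\tfrac{\bm s^\top}{\|\bm s\|_1}\bm\Theta\Big)^p\Big],
\end{equation*}
observes that every term on the right belongs (up to the known factor $\|\bm s\|_1^p/p!$) to the given collection, and concludes by uniqueness of the Laplace transform on $[0,\infty)^d$. You instead extract all mixed moments $\EE[\bm\Theta^{\bm k}]$ via the multinomial expansion and the polynomial identity theorem, and then invoke moment determinacy on the compact set $[0,1]^d$ via Stone--Weierstrass. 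Both arguments hinge on the same two ingredients --- the boundedness of $\bm\Theta$ (which licenses the interchange of sum and expectation, respectively guarantees moment determinacy) and the rescaling $\bm s\mapsto\|\bm s\|_1\cdot\bm s/\|\bm s\|_1$ that turns simplex data into orthant data --- but package them differently. The paper's version is shorter and keeps the data as one-dimensional moments, at the price of appealing to Laplace-transform uniqueness as a black box; yours is more elementary and self-contained (Stone--Weierstrass plus the polynomial identity theorem), at the price of the extra coefficient-extraction step, which you handle correctly. Your parenthetical Hausdorff/Cram\'er--Wold alternative is also sound.
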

\begin{proof}
 By a series expansion, for all $\bm s \in [0,\infty)^d$, the limits in 
 \eqref{eq:set-all-limits} uniquely determine
 $$ \EE [ \exp(-\bm s^\top \bm \Theta) ] 
  = \sum_{p=0}^\infty (-1)^p \cdot \frac{\|\bm s\|_1^p}{p!} \cdot \EE\left[ \left(\frac{\bm s^\top}{\|\bm s\|_1} \bm \Theta\right)^p \right], \qquad \bm s \in [0,\infty),$$ 
 i.e.\ the Laplace transform of the spectral measure. 
\end{proof}

Let us list some properties of the function 
$\bm v \mapsto m_p(\bm v)=\EE[(\bm v^\top \bm \Theta)^p]$ for $p=1,2,\ldots$
and $\bm v \in \partial B^+_1(\bm 0)$. We identify
$m_1(\bm v)=\sum\nolimits_{i=1}^d v_i \EE[\Theta_i]$, 
i.e.\ the function is linear for $p=1$. 
As, for all $i=1,\ldots,d$,
\begin{align} \label{eq:mean-spectral}
\EE[\Theta_i] ={}& \int_0^1 \PP(\Theta_i > x) \, \mathrm{d} x
{}={} \int_1^\infty y^{-2} \PP(\Theta_i > 1/y) \, \, \mathrm{d} y \nonumber \\
={}& \PP(Y \Theta_i > 1) {}={} \lim_{u \to \infty} \PP( X_i^* > u \mid \|\bm X^*\| > u) {}={} \lim_{u \to \infty}  \frac{1}{a^*(u) \PP(\|\bm X^*\|>u)} = \frac 1 \tau,
\end{align} 
where $\tau$ is defined as in Eq.~\eqref{eq:norm-const}, we even obtain
$ m_1(\bm v) = \frac 1 {\tau} \sum_{i=1}^d v_i$, 
which implies that $m_1$ is constant on $\partial B_1^+(\bm 0)$.
For $p \geq 2$, the function $m_p$ is strongly convex. Thus, one easily deduces from Jensen's inequality that $ m_p(\bm v)\le \max_{1\leq i\leq d}\EE[\bm \Theta_i^{p}]$.

For many popular limit models, closed-form expressions for 
$\EE[ (\bm v^\top \Theta)^p]$ are difficult to obtain as the spectral measure
w.r.t.\ the maximum norm $\|\cdot\| = \|\cdot\|_\infty$ is often difficult to handle in high dimensional setting.  Instead, spectral vectors
are often normalized w.r.t.\ some less complex functional 
$\ell_I(\bm x) = \bigvee_{i \in I} x_i$ over some smaller subset $I \subset \{1,\ldots,d\}$. Such functionals $\ell_I$ play an important role when estimating extremal coefficients, see Section \ref{sec:extr-coeff}. 
Since $\partial \{\bm x \in E:\ \ell_I(\bm x) > 1\}
\subset \{\bm x \in E: \ \ell_I(\bm x) = 1\}$ is a $\mu^*$-null set, the
limit  
	\begin{equation} \label{eq:norm-const-subset}
	\tau_I :=  \mu^*(\{\bm x \in E: \ \ell_I(\bm x) > 1\}) 
	= \lim_{u \to \infty} a^*(u) \PP(\ell_I(\bm X^*) > u)
	\end{equation}
exists and, by Eq.~\eqref{eq:vagueconv-norm}, equals 
$\mu^*(\{\bm x \in E: \ \ell_I(\bm x) > 1\})$. Furthermore, for every $I \neq \emptyset$, it satisfies the 
relation 
\begin{equation} \label{eq:const-relation}
 \tau_I = \EE[\ell_I(\bm \Theta)] \cdot \tau \in [1,|I|].
\end{equation}
Using the property
$\EE[\ell_I (\bm \Theta)] >0$ for every $I\neq \emptyset$, we can study the behaviour of the 
$\ell_I$-spectral vector $\bm \Theta^I$ whose distribution can be
defined from the original spectral measure via the relation
\begin{equation} \label{eq:spectral-trafo}
\PP\left( \bm \Theta^I \in A \right)
= \frac{1}{\EE[\ell_I(\bm \Theta)]} \int_{[0,\infty)^d} \mathds 1\{ \bm \theta / \ell_I(\bm \theta) \in A\} \ell_I(\bm \theta) \, \PP(\bm \Theta \in \mathrm{d} \bm \theta),
\end{equation} 
where $ A \subset \{ \bm w \in [0,\infty)^d: \ \ell_I (\bm w) = 1 \}$.
Note that, from Eq.~\eqref{eq:const-relation} and Eq.~\eqref{eq:spectral-trafo}, we directly obtain an analogous result to
Eq.~\eqref{eq:mean-spectral}, namely
\begin{equation} \label{eq:mean-spectral-ell}
\EE[ \Theta_i^I] = \frac{1}{\tau_I}, \quad i=1,\ldots,d.
\end{equation}
Analogues to Eq.~\eqref{eq:lin-limits} and to Lemma
\ref{lem:functions2} are given in the following proposition.

\begin{proposition} \label{prop:limits-ell}
 Let $\bm X^*$ be a normalized regularly varying random vector with index $1$.
 Then, for every $\bm v \in \RR^d$ and 
 $p \in \NN$, we have
 \begin{align}
 \PP\left(\bm v^\top \bm X^* > u \mid \ell_I(\bm X^*) > u\right) 
 \longrightarrow{}&  \EE\left[ \left( \bm v^\top \bm \Theta^I\right)_+ \wedge 1  \right]=\frac{\EE\left[(\bm v^\top \bm \Theta)_+ \wedge \ell_I(\bm \Theta)\right]}{\EE[\ell_I(\bm \Theta)]}     \label{eq:p-limits-ell} \\
  \EE\left[  \left(\bm v^\top \frac{\bm X^*}{\ell_I(\bm X^*)}\right)_+^p  \, \bigg| \, \ell_I(\bm X^*) > u \right] 
  \longrightarrow{}& \EE\left[ \left( \bm v^\top \bm \Theta^I  \right)_+^p \right]=\frac {\EE\left[ \left( \bm v^\top \frac{\bm \Theta}{\ell_I(\bm \Theta)} \right)_+^p \cdot \ell_I(\bm \Theta) \right]}{\EE\left[ \ell_I(\bm \Theta) \right]}  \label{eq:e-limits-ell}
 \end{align}
 as $u \to \infty$.  
\end{proposition}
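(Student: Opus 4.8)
The plan is to deduce both limits from Lemma~\ref{lem:functions} --- together with its extension to indicator functions of $\mathcal{L}(Y,\bm\Theta)$-continuity sets and, more generally, to bounded functions that are $\mathcal{L}(Y,\bm\Theta)$-a.e.\ continuous --- by writing the quantities conditioned on $\{\ell_I(\bm X^*)>u\}$ as ratios of quantities conditioned on $\{\|\bm X^*\|_\infty>u\}$. This is legitimate since $\ell_I\le\|\cdot\|_\infty$ on $[0,\infty)^d$, so $\{\ell_I(\bm X^*)>u\}\subseteq\{\|\bm X^*\|_\infty>u\}$, and, writing $R_u=u^{-1}\|\bm X^*\|_\infty$ and $\bm\Theta_u=\bm X^*/\|\bm X^*\|_\infty$, one has $\{\ell_I(\bm X^*)>u\}=\{R_u\ell_I(\bm\Theta_u)>1\}$, $\{\bm v^\top\bm X^*>u\}=\{R_u\bm v^\top\bm\Theta_u>1\}$ and $\bm X^*/\ell_I(\bm X^*)=\bm\Theta_u/\ell_I(\bm\Theta_u)$, so that every event and variable in \eqref{eq:p-limits-ell}--\eqref{eq:e-limits-ell} is a function of $(R_u,\bm\Theta_u)$.

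The common denominator is $\PP(R_u\ell_I(\bm\Theta_u)>1\mid\|\bm X^*\|_\infty>u)\to\PP(Y\ell_I(\bm\Theta)>1)=\EE[\ell_I(\bm\Theta)\wedge1]=\EE[\ell_I(\bm\Theta)]=\tau_I/\tau$, using that $\{(r,\theta):r\ell_I(\theta)>1\}$ is an $\mathcal{L}(Y,\bm\Theta)$-continuity set (its boundary lies in the null set $\{r\ell_I(\theta)=1\}$, as $Y$ has a continuous law independent of $\bm\Theta$), that $\ell_I(\bm\Theta)\le\|\bm\Theta\|_\infty=1$ a.s., and \eqref{eq:const-relation}. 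For \eqref{eq:p-limits-ell} the numerator $\PP(R_u\bm v^\top\bm\Theta_u>1,\,R_u\ell_I(\bm\Theta_u)>1\mid\|\bm X^*\|_\infty>u)$ converges similarly to $\PP(Y\bm v^\top\bm\Theta>1,\,Y\ell_I(\bm\Theta)>1)=\EE[(\bm v^\top\bm\Theta)_+\wedge\ell_I(\bm\Theta)\wedge1]=\EE[(\bm v^\top\bm\Theta)_+\wedge\ell_I(\bm\Theta)]$; dividing by the denominator gives the right-hand ratio in \eqref{eq:p-limits-ell}, and pulling the factor $\ell_I(\bm\Theta)$ inside the minimum and using the change of variables \eqref{eq:spectral-trafo} rewrites it as $\EE[(\bm v^\top\bm\Theta^I)_+\wedge1]$. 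For \eqref{eq:e-limits-ell} the numerator is $\EE[g(R_u,\bm\Theta_u)\mid\|\bm X^*\|_\infty>u]$ with $g(r,\theta)=(\bm v^\top\theta/\ell_I(\theta))_+^p\,\bm 1\{r\ell_I(\theta)>1\}$; since $g$ vanishes on the open set $\{r\ell_I(\theta)<1\}$, which is a neighbourhood of $\{\ell_I(\theta)=0\}$, it is continuous off the null set $\{r\ell_I(\theta)=1\}$, so the a.e.-continuous version of Lemma~\ref{lem:functions} yields the limit $\EE[g(Y,\bm\Theta)]=\EE[(\bm v^\top\bm\Theta/\ell_I(\bm\Theta))_+^p\,\ell_I(\bm\Theta)]$ (condition on $\bm\Theta$ and use $\PP(Y>1/\ell_I(\bm\Theta)\mid\bm\Theta)=\ell_I(\bm\Theta)\wedge1=\ell_I(\bm\Theta)$); dividing by $\EE[\ell_I(\bm\Theta)]$ and applying \eqref{eq:spectral-trafo} to $\bm w\mapsto(\bm v^\top\bm w)_+^p$ gives \eqref{eq:e-limits-ell}.

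I expect the only genuine obstacle to be boundedness of $g$, which Lemma~\ref{lem:functions} requires. Because $\theta_i\le\ell_I(\theta)$ for $i\in I$, one has $\bm v^\top\theta/\ell_I(\theta)\le\|\bm v\|_1$ exactly when $\bm v$ is supported on $I$, in which case $0\le g\le\|\bm v\|_1^p$ and the argument is complete; this is the only case in which the proposition is used. If $\bm v$ charges coordinates outside $I$ the ratio $\bm v^\top\bm X^*/\ell_I(\bm X^*)$ may be unbounded --- and, since the $X_i^*$ have infinite mean, the left-hand side of \eqref{eq:e-limits-ell} can then be infinite for finite $u$ while the right-hand side is finite --- so one should either restrict to $\mathrm{supp}(\bm v)\subseteq I$ or insert a truncation $(\,\cdot\,)_+^p\wedge M$ and bound the remainder by a uniform-integrability estimate. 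A more structural alternative is to derive from \eqref{eq:exp-spec} and \eqref{eq:spectral-trafo} the $\ell_I$-polar decomposition $\mu^*(\mathrm{d}\bm x)=\tau_I\,s^{-2}\,\mathrm{d}s\,\PP(\bm\Theta^I\in\mathrm{d}\omega)$ in the coordinates $s=\ell_I(\bm x)$, $\omega=\bm x/\ell_I(\bm x)$, conclude $\mathcal{L}(u^{-1}\bm X^*\mid\ell_I(\bm X^*)>u)\stackrel{w}{\longrightarrow}\mathcal{L}(Y_I\bm\Theta^I)$ with $Y_I$ unit Pareto independent of $\bm\Theta^I$, and then repeat verbatim the derivations of \eqref{eq:lin-limits} and Lemma~\ref{lem:functions2}.
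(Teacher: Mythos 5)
Your proof follows essentially the same route as the paper's: both condition on $\{\|\bm X^*\|_\infty>u\}$, pass to the limit via Lemma \ref{lem:functions} (extended to bounded, a.e.-continuous functions), and translate the resulting ratios through the change of measure \eqref{eq:spectral-trafo}. Your caveat about the boundedness of $(\bm v^\top\bm\theta/\ell_I(\bm\theta))_+^p$ when $\bm v$ charges coordinates outside $I$ is well taken --- the paper's proof simply asserts that this function is bounded, which in fact holds only when $\mathrm{supp}(\bm v)\subseteq I$, the only case used in the sequel.
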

\begin{proof}
 First, we notice that  $\ell_I(\bm x) \le \|\bm x\|$ on $[0,\infty)^d \setminus \{\bm 0\}$ by definition of $\ell_I$.
Thus $\ell_I(\bm X^*) > u$ implies that $\|\bm X^*\| > u$
and we have
\begin{align*}
  & \lim_{u \to \infty} \PP\left(\bm v^\top \bm X^* > u \mid \ell_I(\bm X^*) > u\right) 
  {}={} \lim_{u \to \infty} \frac{\PP\left( \bm v^\top \bm X^* > u, \, \ell_I(\bm X^*) > u \mid \|\bm X^*\| > u  \right)}
  {\PP\left( \ell_I(\bm X^*) > u \mid \|\bm X^*\| > u  \right)} \\
  ={}& \frac{\PP\left( Y ( \bm v^\top \bm \Theta) > 1, \,  Y \ell_I (\bm \Theta) > 1 \right)}{\PP\left(Y \ell_I(\bm \Theta) > 1 \right)} 
  {}={} \frac{ \EE\left[  (\bm v^\top \bm \Theta)_+  \wedge \ell_I(\bm \Theta) \wedge 1 \right]}{\EE\left[ \ell_I (\bm \Theta)  \wedge 1\right]} \\
  ={}& \frac{1}{\EE[\ell_I(\bm \Theta)]} \cdot \EE\left[(\bm v^\top \bm \Theta)_+ \wedge \ell_I(\bm \Theta)\right]
  {}={} \EE\left[ \left( \bm v^\top \bm \Theta^I \right)_+ \wedge 1  \right],
\end{align*}
where we used the fact that $\ell_I(\bm \Theta) \leq \|\bm \Theta\| = 1$ and we apply the change of measure \eqref{eq:spectral-trafo}. 
Using the same arguments and applying Lemma \ref{lem:functions} to the bounded measurable function
$g(y,\theta)=(\bm v^\top \theta)_+^p\mathds 1\{\ell_I(y\theta)>1\}$ whose discontinuity set $\{\ell_I(y\theta)= 1\}$ has no limiting mass, i.e. $\PP(\ell_I(Y\bm \Theta)= 1)=\PP(Y\ell_I(\bm \Theta)=1)=0$,
we further obtain
\begin{align*}
    & \lim_{u \to \infty} \EE\left[ \left(\bm v^\top \frac{\bm X^*}{\ell_I(\bm X^*)}\right)_+^p  \, \bigg| \, \ell_I(\bm X^*) > u \right] \\
 {}={}& \lim_{u \to \infty} \frac{\EE\left[ \left(\bm v^\top \frac{\bm X^*}{\ell_I(\bm X^*)}\right)_+^p   \, \mathds 1\{\ell_I(\bm X^*) > u] \, \bigg| \, \|\bm X^*\| > u \right\} }
                               {\PP\left( \ell_I(\bm X^*) > u \mid \|\bm X^*\| > u \right)} \\
 {}={}& \frac{\EE\left[ \left( \bm v^\top \frac{\bm \Theta}{\ell_I(\bm \Theta)} \right)_+^p \mathds 1\{Y \ell_I(\bm \Theta) > 1\}  \right]}{\PP\left(Y \ell_I(\bm \Theta) > 1  \right)} 
 {}={} \frac{ \EE\left[ \left( \bm v^\top \frac{\bm \Theta}{\ell_I(\bm \Theta)} \right)_+^p \cdot ( \ell_I(\bm \Theta)\wedge 1 ) \right]}{\EE\left[\ell_I(\bm \Theta) \wedge 1\right]} \displaybreak[0]\\
 {}={}& \frac 1 {\EE\left[ \ell_I(\bm \Theta) \right]} \cdot \EE\left[ \left( \bm v^\top \frac{\bm \Theta}{\ell_I(\bm \Theta)} \right)_+^p \cdot \ell_I(\bm \Theta) \right]
 {}={} \EE\left[ \left( \bm v^\top \bm \Theta^I  \right)_+^p \right].
\end{align*}
\end{proof}  
Similarly to Prop.~\ref{prop:limits}, it can be shown that the distribution of $\bm \Theta^I$ is uniquely determined by
  \begin{equation*}
  \left\{ \EE[ (\bm v^\top \bm \Theta^I)_{+}^p]; \ \bm v \in \partial B^+_1(\bm 0), \ p \in \NN \right\} = \left\{ \EE[ (\bm v^\top \bm \Theta^I)^p]; \ \bm v \in \partial B^+_1(\bm 0), \ p \in \NN \right\}.
 \end{equation*}

While the $\ell_I$-spectral measure, i.e.\ the distribution of $\bm \Theta^I$,
can be written in terms of the original spectral measure as in
\eqref{eq:spectral-trafo}, the original spectral measure can be uniquely 
recovered from the distribution of $\bm \Theta^I$ only if 
$\PP(\ell_I(\bm \Theta) = 0) = 0$. Then, the inverse transformation is 
\begin{equation} \label{eq:spectral-back-trafo}
 \PP\left( \bm \Theta \in A \right) = \frac{1}{\EE[\|\bm \Theta^I\|]}
 \int_{[0,\infty)^d} \mathds 1\{ \bm \theta / \|\bm \theta\| \in A\} \|\bm \theta\| \, \PP(\bm \Theta^I \in \mathrm{d} \bm \theta),
  \qquad A \subset S_{d-1}^+.
\end{equation}  
Note that the discussion above can be easily extended to any positively
 homogeneous continuous functional $\ell: [0,\infty)^d \to [0,\infty)$ with corresponding constant $\tau_\ell$ which is positive.

\section{Asymptotic normality}\label{sec:asnorm}

\subsection{Asymptotic normality for deterministic thresholds}
\label{subsec:fixed-thresh}

Assume that we observe independent copies $\bm X^*_l = (X^*_{l1}, \ldots X^*_{ld})^\top$, $l = 1,\ldots,n$, of the normalized regularly varying random
vector $\bm X^*$. Consider  a non-empty subset $I\subset \{1,\ldots,d\}$. One of the classical approaches of \cite{pickands1975statistical} to estimate the distribution of the $\ell_I$-spectral vector $\bm \Theta^I$  is based on the conditional exceedance probabilities of convex combinations given in relation \eqref{eq:p-limits-ell}, i.e.\
\begin{equation*} 
\lim_{u \to \infty} \PP( \bm v^\top \bm X^* > u \mid \ell_I(\bm X^*) > u) 
 = \EE\left[ (\bm v^\top \bm \Theta^I)_+ \wedge 1\right] 
\end{equation*}
for
$\bm v \in \partial B_1(\bm 0) = \{\bm x \in \RR^d: \, \|\bm x\|_1 = 1\}$.   The associated empirical estimators are of the form
\begin{align} \label{eq:def-benchmark-estimator}
  \frac{\widehat{P}^{\mathrm{(conv)}}_{n,u,I} (\bm v )}{\widehat{P}_{n,u,I} } 
  ={}& \frac{ n^{-1} \sum_{l=1}^n \mathds 1\{ \bm v^\top \bm X^*_{l} > u, \ell_I(\bm X^*_l) > u\}}
  { n^{-1} \sum_{l=1}^n \mathds 1\{ \ell_I(\bm X^*_l) > u\}},
  \qquad \bm v \in \partial B_1(\bm 0)\,.
  \end{align}
In view of Prop.~\ref{prop:limits}, we promote in this paper the use of the alternative characterization of the distribution of the $\ell_I$-spectral vector $\bm \Theta^I$ based on the limits of moments of convex combinations provided by Eq.~\eqref{eq:e-limits-ell}, i.e.\
\begin{equation*}
  \lim_{u \to \infty} \EE\left[  \left(\bm v^\top \frac{\bm X^*}{\ell_I(\bm X^*)}\right)^p  \, \bigg| \, \ell_I(\bm X^*) > u \right] = \EE\left[ ( \bm v^\top \bm \Theta^I )^p \right]\,,
\end{equation*} 
for every $\bm v  \in \partial B_1^+(\bm 0)$ and $p \in \NN_0$. The associated empirical estimators are of the form
\begin{equation} \label{eq:def-ratio-estimator}
\frac{\widehat{M}_{n,u,I}(\bm v,p)}{\widehat{P}_{n,u,I}} 
={}  \frac{ n^{-1} \sum_{l=1}^n  
	\left(\bm v^\top \frac{\bm X^*_{l}}{\ell_I(\bm X^*_{l})}\right)^p
	\mathds 1\{ \ell_I(\bm X^*_{l})> u\}}
{ n^{-1} \sum_{l=1}^n \mathds 1\{\ell_I(\bm X^*_{l}) > u\}},
\qquad \bm v \in \partial B_1^+(\bm 0).
\end{equation}

In the following, we will consider the new estimators $\widehat{M}_{n,u,\ell}(\bm v,p)/\widehat{P}_{n,u,\ell}$ for any $p \in \NN_0$.  As the functional $\ell_I$ puts emphasis on the components $(X^*_i)_{i \in I}$ of $\bm X^*$ only, we will focus on the same components in the convex combination, i.e.\ we will consider weight vectors $\bm v_I$ where $\bm v_I$ is the vector with components $v_i$ for $i \in I$ and $0$ otherwise. Thus, we obtain $\bm v_I^\top \bm X^* = \sum_{i \in I} v_i X^*_i$.  

The next theorem will provide a functional central
limit theorem for the estimators
$ \widehat M_{n,u,I}(\bm v_I,p)$ 
noticing that $\widehat P_{n,u,I}  = \widehat M_{n,u, I}(\bm v_I, 0)$ for all 
$\bm v \in \partial B_1^{+}(\bm 0)$. Then we can apply the functional delta method to obtain the asymptotic result for the ratio estimator
$ \widehat M_{n,u,I}(\bm v_I, p)/ \widehat P_{n,u,I}.$  

In this subsection, we assume observations from the normalized random vector $\bm X^*$, in Subsection \ref{subsec:random-thresh} the more general case of observations from the regularly varying vector $\bm X$ with unknown index $\alpha>0$ and unknown marginal scales is considered. To allow for this extension, in the next theorem, we do not only prove a functional central limit theorem for $\widehat M_{n,u,I}(\bm v_I, p)$ with $\bm v \in \partial B_1^{+}(\bm 0)$ and $p \in \NN_0$, but already for a more general version.

Indeed, it will turn out to be useful to allow for multiplication of single components of $\bm X^*$ by a vector $\bm s \in [0,\infty)^d$ of factors, and for raising them up to a power of $1/\bm \beta$ with $\bm \beta \in (0,\infty)^d$. In the following, let $(\bm s \circ \bm X^*)^{1/\bm\beta} = (s_i^{1/\beta_i} {X_i^*}^{1/\beta_i})_{1\le i\le d}$.
With this notation, we can write
$$ \ell_I(\bm X^*) = \max_{i \in I} |X^*_i| = \| (\bm 1_I \circ \bm X^*)^{\bm 1}\|, $$
where $\bm 1 = (1,\ldots,1) \in \RR^d$, i.e., according to the notation introduced above, $\bm 1_I$ is the vector with the $i$th component being equal to $1$ if $i \in I$ and being equal to $0$ otherwise. Furthermore,
$\bm v_I^\top \bm X^* = \bm v^\top (\bm 1_I \circ \bm X^*)$. Thus, we
can rewrite
\begin{align} 
\widehat M_{n,u,I}(\bm v_I,p) ={}& \frac 1 n \sum_{l=1}^n  
\left(\bm v^\top \frac{\bm 1_I \circ \bm X^*_{l}}{\|\bm 1_I \circ \bm X^*_{l}\|}\right)^p
\mathds 1\{ \|\bm 1_I \circ \bm X^*_{l}\| > u\} \nonumber \\
={}& \frac 1 n \sum_{l=1}^n  
\left(\bm v^\top \frac{\bm 1_I \circ \bm X^*_{l}/u}{\|\bm 1_I \circ \bm X^*_{l}/u\|}\right)^p
\mathds 1\{ \|\bm 1_I \circ \bm X^*_{l}/u\| > 1\}, \label{eq:rewrite-m-hat}
\end{align}
where the right-hand side of the equation depends on the components $v_i$, $i \in I$, of $\bm v$ only.
In our extended version of the functional central limit theorem, we will not restrict our attention to the vector $\bm 1_I \circ \bm X^*$ being extreme,
but a perturbed vector $(\bm s \circ \bm X^*)^{1/\bm \beta}$ with $\bm s$ being close to $\bm 1_I$ and $\bm \beta$ being close to $\bm 1$. More precisely, we consider $\bm s \in A_{\delta,I}$
and $\bm\beta \in [(1+\delta)^{-1}, 1+\delta]^d$ where
\begin{equation*}
A_{\delta,I} = \left\{ \bm x \in (0,\infty)^d: \ 
x_i = 0  \text{ for all } i \notin I, \ (1+\delta)^{-1} \leq x_i \leq 1 +\delta \text{ for all } i \in I \right\}
\end{equation*}
for some $\delta \geq 0$, i.e.\ we allow for a perturbation of a factor and a power between $(1+\delta)^{-1}$ and $1+\delta$ in each component of $\bm s$ and in each component of $\bm\beta$. We also denote the disjoint unions
$$A_\delta = \bigcup_{\emptyset \neq I \subset \{1,\ldots,d\}} A_{\delta,I}\qquad\text{and}\qquad  A_\delta' = \bigcup_{\emptyset \neq I \subset \{1,\ldots,d\}} A_{\delta,I} \times [(1+\delta )^{-1}, 1+\delta]^d. $$	

Replacing $\bm 1_I \circ \bm X^*/u$ in Eq.~\eqref{eq:rewrite-m-hat} by $(\bm s \circ \bm X^*/u)^{1/\bm\beta}$, we obtain the generalized estimator
\begin{align*}\widehat{M}_{n,u}(\bm v,\bm s, \beta,p)
&={}  \frac 1 n \sum_{l=1}^n  
	\left(\bm v^\top \frac{(\bm s \circ \bm X^*_{l}/u)^{1/\bm\beta}}{\|(\bm s \circ \bm X^*_{l}/u)^{1/\bm\beta}\|}\right)^p
	\mathds 1\{ \| (\bm s \circ \bm X_{l}^*/u)^{1/\bm\beta} \| >1\} \\
	&={}  \frac 1 n \sum_{l=1}^n  
	\left(\bm v^\top \frac{(\bm s \circ \bm X^*_{l}/u)^{1/\bm\beta}}{\|(\bm s \circ \bm X^*_{l}/u)^{1/\bm\beta}\|}\right)^p
	\mathds 1\{ \| \bm s \circ \bm X_{l}^* \| >u\}
\end{align*}
for $\bm v \in \partial B_1^+(\bm 0)$, $(\bm s,\bm\beta) \in A_\delta'$ 
and $p \in \NN_0$. By definition, $\widehat{M}_{n,u,I}(\bm v_I, p) = \widehat{M}_{n,u}(\bm v_I,\bm 1_I,\bm 1, p)$ for every $I \subset \{1,\ldots,d\}$.
In particular, the corresponding index set $I$ of components being relevant for the maximum 
can be read off from the components of $\bm s$ that are strictly positive.
Analogously,  with the convention $0^0=1$ we define $\widehat{P}_{n,u}(\bm s) = \widehat{M}_{n,u}(\bm v,\bm s,\bm \beta, 0)$ which depends  neither on $\bm v \in \partial B_1^+(\bm 0)$ nor on 
$\bm \beta \in [(1+\delta )^{-1}, 1+\delta]^d$, and satisfies $\widehat{P}_{n,u,I} = \widehat{P}_{n,u}(\bm 1_I)$.

Analogously to Prop.~\ref{prop:limits-ell}, we obtain
\begin{align}
\lefteqn{a^*(u) \EE\left[ \widehat{M}_{n,u}(\bm v, \bm s,\bm \beta,p) \right] }\nonumber \\
   &={} a^*(u) \PP(\|\bm s \circ \bm X^*\| > u) 
   \cdot \EE \left[ \left(\bm v^\top \frac{(\bm s \circ \bm X^* /u)^{1/\bm\beta}}{\|(\bm s \circ \bm X^* /u)^{1/\bm\beta}\|}\right)^p \, \bigg| \, \|\bm s \circ \bm X^*\| > u \right] \nonumber \\
  & \underset{u\to \infty}{\longrightarrow}{}  \tau \cdot \EE\left[ \left(\bm v^\top  \frac{(Y \bm s \circ \bm \Theta)^{1/\bm\beta}}{\|(Y \bm s \circ \bm \Theta)^{1/\bm\beta}\|} \right)^p \mathds 1\{Y\|\bm s \circ \bm \Theta\|>1\}\right], \label{eq:mean-moment-est}
\end{align}
where we used Eq.~\eqref{eq:const-relation} and Eq.~\eqref{eq:spectral-trafo}.
Beyond the limiting behaviour of the expectation, the following theorem establishes asymptotic normality of $\widehat{M}_{n,u}(\bm v,\bm 1_I,\bm 1, p)$. A detailed proof is given in Appendix \ref{app:th5}.

\begin{theorem} \label{thm:fclt}
	Let $\bm X^*_l$, $l \in \NN$, be independent copies of a   regularly varying $[0,\infty)^d$-valued random vector $\bm X^*$ with index $\alpha=1$ 
	satisfying Eq.~\eqref{eq:a-normal} and with spectral component denoted by $\bm \Theta$.
	Furthermore, let $u_n \to \infty$ such that $n/a^*(u_n) \to \infty$. Then, for every finite set $K_0 \subset \NN_0$ and $\delta \geq 0$, the sequence of processes $(\{G_{n}(\bm v, \bm s,\bm \beta, p); \, \bm v \in \partial B_1^+(\bm 0), \, (\bm s,\bm \beta) \in A_{\delta}', \, p \in K_0\})_{n \in \NN}$
	with 
	\begin{align} \label{eq:new-gn}
	G_{n}(\bm v, \bm s, \bm \beta, p) ={}&
	\sqrt{\frac{n}{a^*(u_n)}} \left[ a^*(u_n) \widehat M_{n,u_n}(\bm v, \bm s,\bm \beta, p)  - a^*(u_n) \EE[\widehat M_{n,u_n}(\bm v, \bm s,\bm \beta, p)] \right]
	\end{align}     
	converges weakly in $\ell^\infty(\partial B_1^+(\bm 0) \times A_\delta' \times K_0)$ to a tight centered Gaussian
	process $G$ with covariance 
	\begin{align}   \label{eq:cov-g}
	& \Cov(G(\bm v, \bm s, \bm \beta,p_1), G(\bm w, \bm t,\bm \gamma, p_2)) \\
	 ={}& \tau 
	\EE\left[ 
	\left( \bm v^\top  \frac{(Y \bm s \circ \bm \Theta)^{1/\bm \beta}}{\|(Y \bm s \circ \bm \Theta)^{1/\bm \beta}\|} \right)^{p_1}
	\left( \bm w^\top  \frac{(Y \bm t \circ \bm \Theta)^{1/\bm \gamma}}{\|(Y \bm t \circ \bm \Theta)^{1/\bm \gamma}\|}  \right)^{p_2}
	\mathds 1\left\{Y\left( \|\bm s \circ \bm \Theta\| \wedge 
	\|\bm t \circ \bm \Theta\|  \right)>1\right\}  \right]. \nonumber
	\end{align}       
\end{theorem}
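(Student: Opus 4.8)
The plan is to recognise $G_n$ as an empirical process over a shrinking (triangular-array) family of functions and to apply a functional CLT for such arrays, verifying finite-dimensional convergence directly and asymptotic equicontinuity through a uniform-entropy bound. Let $\PP_n = n^{-1}\sum_{l=1}^n\delta_{\bm X_l^*}$ be the empirical measure, $P$ the law of $\bm X^*$, and set
$$ g_n = g_{n,\bm v,\bm s,\beta,p}(\bm x) = \left(\bm v^\top\frac{(\bm s\circ\bm x)^{1/\beta}}{\|(\bm s\circ\bm x)^{1/\beta}\|}\right)^{p}\bm 1\{\|\bm s\circ\bm x\| > u_n\}, \qquad h_n = \sqrt{a^*(u_n)}\,g_n, $$
so that $G_n(\bm v,\bm s,\beta,p) = \sqrt{n}\,(\PP_n - P)h_n$. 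Since $\|\bm v\|_1 = 1$ and $s_i\le 1+\delta$ for all $i$, the class $\mathcal H_n = \{h_{n,\bm v,\bm s,\beta,p}\}$ admits the envelope $H_n = \sqrt{a^*(u_n)}\,\bm 1\{\|\bm X^*\| > u_n/(1+\delta)\}$. Because $a^*$ is regularly varying of index $1$ (by \eqref{eq:a-normal}) and $a^*(u)\PP(\|\bm X^*\| > u)\to\tau$ by \eqref{eq:norm-const}, one gets $\EE[H_n^2] = a^*(u_n)\PP(\|\bm X^*\| > u_n/(1+\delta))\to(1+\delta)\tau$; and since $a^*(u_n)/n\to 0$, the envelope satisfies the Lindeberg condition $\EE[H_n^2\bm 1\{H_n > \varepsilon\sqrt n\}]\to 0$ for every $\varepsilon>0$ (the indicator vanishes for $n$ large). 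We are then in the situation of a functional CLT for triangular arrays of independent processes (see, e.g., the empirical-process theory for changing classes as in van der Vaart and Wellner, Section~2.11), and it remains to establish (a) convergence of the finite-dimensional distributions and (b) an entropy condition giving asymptotic equicontinuity.

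For (a): any linear combination of finitely many coordinates of $G_n$ is a normalised sum of i.i.d.\ centred summands, each bounded by a constant, so Lindeberg--Feller applies once the covariances converge. As $a^*(u_n)\EE[g_n]\EE[g_n'] = O(1/a^*(u_n))\to 0$, the point is
$$ a^*(u_n)\,\EE\!\big[g_{n,\bm v,\bm s,\beta_1,p_1}(\bm X^*)\,g_{n,\bm w,\bm t,\beta_2,p_2}(\bm X^*)\big]\ \longrightarrow\ \text{the right-hand side of \eqref{eq:cov-g}}. $$
This is obtained by the same conditioning argument as in the proof of Proposition~\ref{prop:limits-ell}: write the left-hand side as $a^*(u_n)\PP(\|\bm X^*\| > u_n/(1+\delta))$ times a conditional expectation and apply Lemma~\ref{lem:functions} to the bounded function $(y,\theta)\mapsto(\bm v^\top(\bm s\circ\theta)^{1/\beta_1}/\|(\bm s\circ\theta)^{1/\beta_1}\|)^{p_1}(\bm w^\top(\bm t\circ\theta)^{1/\beta_2}/\|(\bm t\circ\theta)^{1/\beta_2}\|)^{p_2}\bm 1\{y\|\bm s\circ\theta\| > 1+\delta,\ y\|\bm t\circ\theta\| > 1+\delta\}$, whose discontinuity set has no mass under the law of $(Y,\bm\Theta)$ (as $\PP(Y\|\bm s\circ\bm\Theta\| = 1+\delta)=0$); using that $Y$ is unit Pareto, that $\|\bm s\circ\bm\Theta\|\le 1+\delta$, and that $\|\bm s\circ\bm\Theta\|\wedge\|\bm t\circ\bm\Theta\| \le 1+\delta$, the factors $1+\delta$ cancel and one recovers exactly \eqref{eq:cov-g}. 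Equivalently, by \eqref{eq:vagueconv-norm} and the polar decomposition \eqref{eq:exp-spec} of $\mu^*$, the radial integral $\int_0^\infty\bm 1\{r\|\bm s\circ\bm\omega\| > 1,\ r\|\bm t\circ\bm\omega\| > 1\}\,r^{-2}\,\mathrm dr = \|\bm s\circ\bm\omega\|\wedge\|\bm t\circ\bm\omega\|$ is what produces the minimum. This computation also identifies the covariance of the limit $G$.

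For (b): it is enough to treat the index set one piece at a time. The set $A_\delta = \bigcup_I A_{\delta,I}$ is a finite disjoint union of compact sets, any two of which lie at positive distance (if some index belongs to exactly one of $I,I'$, the corresponding $s_i$ is $0$ on one set and $\ge(1+\delta)^{-1}$ on the other), so weak convergence over $\partial B_1^+(\bm0)\times A_\delta'\times K_0$ follows from weak convergence over each piece with $I$ fixed. On such a piece $\{i:s_i>0\}=I$, and on $\{\|\bm s\circ\bm x\| > u_n\}$ the normalising factor equals $\|\bm s\circ\bm x\|^{1/\beta} > u_n^{1/\beta}$; moreover the normalised vector depends on $\bm x$ only through the ratios $s_ix_i/\max_{j\in I}s_jx_j\in[0,1]$, so, since $1/\beta$ stays in a compact subset of $(0,\infty)$, the map $(\bm v,\bm s,\beta)\mapsto\big(\bm v^\top(\bm s\circ\bm x)^{1/\beta}/\|(\bm s\circ\bm x)^{1/\beta}\|\big)^p$ is Hölder continuous on the (compact) parameter set, with modulus independent of $\bm x$ on the support of the indicator. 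Together with the fact that the indicators $\bm 1\{\|\bm s\circ\bm x\| > u_n\}=\bm 1\{\bigcup_{i\in I}\{x_i > u_n/s_i\}\}$ form a VC class, this makes $\mathcal H_n$ of VC type with covering numbers bounded uniformly in $n$ by $(A/\varepsilon)^V$, whence $\int_0^{\delta_n}\sup_Q\sqrt{\log N(\varepsilon\|H_n\|_{Q,2},\mathcal H_n,L_2(Q))}\,\mathrm d\varepsilon\to 0$ for every $\delta_n\downarrow 0$. Combined with the envelope/Lindeberg control and part (a), the triangular-array empirical-process CLT yields weak convergence of $G_n$ in $\ell^\infty(\partial B_1^+(\bm0)\times A_\delta'\times K_0)$ to a tight centred Gaussian process with covariance \eqref{eq:cov-g}; tightness of the limit follows from total boundedness of the index set under the intrinsic semimetric, which holds because \eqref{eq:cov-g} is continuous in the parameters (dominated convergence, $\bm\Theta$ living on the compact simplex).

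The hard part is step (b). One has to cope simultaneously with the fact that the conditioning sets $\{\|\bm s\circ\bm X^*\| > u_n\}$ shrink with $n$ — which forces a triangular-array CLT and the $n$-dependent envelope $\sqrt{a^*(u_n)}\,\bm 1\{\|\bm X^*\| > u_n/(1+\delta)\}$ rather than a fixed function — and to verify, uniformly in $n$, that the normalised parameter-dependent integrand forms a VC-type (equivalently, manageable) class, being careful about the possible smallness of the factors $s_ix_i$ entering the power $1/\beta$; the disjoint-union structure of $A_\delta$ is precisely what lets one freeze the support $I$ and make these estimates uniform.
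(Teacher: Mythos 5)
Your overall architecture is the same as the paper's: rewrite $G_n$ as a triangular-array empirical process $\sum_{l}(Z_{nl}(f)-\EE Z_{nl}(f))$ over a parametrised class of functions, prove finite-dimensional convergence via the covariance computation of Proposition \ref{prop:limits-ell} together with a Lindeberg argument, and obtain asymptotic equicontinuity from an entropy condition in the style of Section 2.11 of van der Vaart and Wellner. Your covariance computation (conditioning on $\|\bm X^*\|>u_n/(1+\delta)$, applying Lemma \ref{lem:functions}, and letting the radial integral produce the minimum $\|\bm s\circ\bm\Theta\|\wedge\|\bm t\circ\bm\Theta\|$) is correct and matches the paper's. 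Where you diverge is the entropy route: you propose a uniform-entropy/VC-type bound, whereas the paper constructs explicit $\varepsilon$-brackets (putting large atoms of the conditional law of $X_i^*$ into singletons) and invokes the bracketing version, Theorem 2.11.9. Either entropy condition is in principle acceptable, and your observation that $A_\delta$ decomposes into finitely many well-separated pieces on which the support $I$ is frozen is exactly the device the paper uses.

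There is, however, a concrete gap. Every version of the triangular-array FCLT you could invoke here (2.11.1, 2.11.9, 2.11.23 in van der Vaart and Wellner) requires, \emph{in addition} to the Lindeberg and entropy conditions, the mean-square equicontinuity condition
\begin{equation*}
\sup_{\rho(f,g)<\delta_n}\ \sum_{l=1}^n \EE\bigl[(Z_{nl}(f)-Z_{nl}(g))^2\bigr]\ \longrightarrow\ 0
\qquad\text{for every }\delta_n\downarrow 0,
\end{equation*}
where $\rho$ is the parameter semimetric; this is what links the topology on $\partial B_1^+(\bm 0)\times A_\delta'\times K_0$ to the intrinsic $L_2$-structure and yields $\rho$-continuous limit paths. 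Your proposal never verifies it, and it does not follow from the VC property of the indicators: for two nearby scale vectors $\bm s^{(1)},\bm s^{(2)}$ the squared difference contains the term $a^*(u_n)\,\PP\bigl(\bigcup_{i\in I}\{u_n/(s_i^{(1)}\vee s_i^{(2)})\le X_i^*<u_n/(s_i^{(1)}\wedge s_i^{(2)})\}\bigr)$, i.e.\ the normalised probability of a thin shell, and one must show this is $O(\eta)+o(1)$ \emph{uniformly} over the parameter set. That step requires the uniform convergence in the regular variation of the marginal tails (the Potter-type bound producing the sequence $f_n$ in the paper's proof), and, for the continuous factor, the explicit uniform Lipschitz estimate in $(\bm v,\bm s,\beta)$ on the event $\{\|\bm s\circ\bm x\|>u_n\}$ (the paper's case 2, giving the constant $C(\delta,p_{\max},d)$). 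These two estimates constitute the bulk of the paper's proof; you gesture at their ingredients (H\"older continuity away from the indicator boundary, VC indicators) but never assemble them into the required second-moment bound. Until that is done, the appeal to the triangular-array CLT is incomplete.
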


A direct consequence of the weak convergence of $G_n$ is that
$a^*(u_n) \widehat{M}_{n, u_n}(\bm v, \bm s, \bm \beta, p)$ converges in probability to 
the limit expression in Eq.~\eqref{eq:mean-moment-est}. Consequently,
the ratio estimator $\widehat{M}_{n, u_n}(\bm v, \bm s, \bm \beta, p) / \widehat{P}_{n, u_n}(\bm s)$ is consistent:
\begin{equation}
\frac{\widehat{M}_{n, u_n}(\bm v, \bm s, \bm \beta,p)}{\widehat{P}_{n, u_n}(\bm s)}
= \frac{\widehat{M}_{n, u_n}(\bm v, \bm s,\bm \beta, p)}{\widehat{M}_{n, u_n}(\bm v, \bm s, \bm \beta, 0)}
\to_p c(\bm v, \bm s,\bm \beta, p)\,,\qquad n\to \infty\,,
\end{equation}
for all $\bm v \in \partial B_1^+(\bm 0)$, $(\bm s,\bm \beta) \in A_\delta'$ and $p \in \NN_0$
where
\begin{equation*}
 c(\bm v, \bm s,\bm \beta, p)  := \frac 1 {\EE[\|\bm s \circ \bm \Theta\| ]} \cdot
\EE\left[ \left(\bm v^\top \frac{ (Y\bm s \circ \bm \Theta)^{1/\bm \beta}}{\|(Y\bm s \circ \bm \Theta)^{1/\bm \beta}\|}\right)^p \mathds 1\{Y \|\bm s \circ \bm \Theta\|>1\} \right].
\end{equation*} 

Applying Thm.~\ref{thm:fclt} and using the functional Delta method, we can even establish the desired functional limit theorem for the ratio estimator 
$\widehat{M}_{n, u_n}(\bm v, \bm s,\bm \beta, p) / \widehat{P}_{n, u_n}(\bm s)$ provided that we can neglect the preasymptotic bias that arises from the fact that Eq.~\eqref{eq:mean-moment-est} yields an asymptotic relation only.

\begin{corollary} \label{coro:fclt-ratio}
	Let the assumptions of Thm.~\ref{thm:fclt} hold and  assume that there exist some $\delta \geq 0$ and some finite subset $K \subset \NN$ such that,
	for all $p \in K \cup \{0\}$,
    \begin{equation} \label{eq:bias-negligible}
     \sqrt{\frac{n}{a^*(u_n)}}
     \left| a^*(u_n) \EE[\widehat{M}_{n,u_n}(\bm v, \bm s,\bm \beta, p)] - 
     \tau\, \EE[\|\bm s \circ \bm X^*\|] \, c(\bm v, \bm s,\bm \beta, p) 
 \right| \to 0
    \end{equation}
    uniformly in $\bm v \in \partial B_1^*(\bm 0)$ and $(\bm s,\bm \beta) \in A_{\delta}'$.
    
    Then, the sequence of processes 
	$(\{\widetilde G_n(\bm v, \bm s,\bm \beta, p); \, \bm v \in \partial B_1^+(\bm 0), 
	\, (\bm s,\bm \beta) \in A_{\delta}', \, p \in K\})_{n\in\NN}$ with
	$$ \widetilde G_n(\bm v, \bm s,\bm \beta, p) = \sqrt{\frac{n}{a^*(u_n)}} 
	\left( \frac{\widehat{M}_{n, u_n}(\bm v, \bm s,\bm \beta, p)}
	{\widehat{P}_{n, u_n}(\bm s)} - c(\bm v, \bm s,\bm \beta, p) 	\right),$$
	converges weakly in 
	$\ell^\infty(\partial B_1^+(\bm 0) \times A_\delta' \times K)$ to a tight centered Gaussian process $\widetilde G$.
\end{corollary}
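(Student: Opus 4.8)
The plan is to deduce the statement from Theorem~\ref{thm:fclt} by the functional delta method, using the observation that $\widehat P_{n,u_n}(\bm s)=\widehat M_{n,u_n}(\bm v,\bm s,\beta,0)$, which exhibits the ratio estimator as the value of the (Hadamard differentiable) division functional applied to the joint process indexed by $p\in K\cup\{0\}$. First I would apply Theorem~\ref{thm:fclt} with $K_0=K\cup\{0\}$, so that $G_n(\bm v,\bm s,\beta,p)$ converges weakly in $\ell^\infty(\partial B_1^+(\bm 0)\times A_\delta'\times(K\cup\{0\}))$ to the tight centered Gaussian process $G$. Writing $m_n(\bm v,\bm s,\beta,p):=a^*(u_n)\widehat M_{n,u_n}(\bm v,\bm s,\beta,p)$ and letting $d(\bm s):=\tau\,\EE[\|\bm s\circ\bm\Theta\|]$ denote the deterministic constant appearing in \eqref{eq:bias-negligible} (equal, by \eqref{eq:norm-const-subset}--\eqref{eq:const-relation}, to $\lim_{u\to\infty}a^*(u)\PP(\|\bm s\circ\bm X^*\|>u)$ and to $d(\bm s)\,c(\bm v,\bm s,\beta,0)$ since $c(\cdot,0)\equiv1$), the hypothesis \eqref{eq:bias-negligible} says exactly that $\sqrt{n/a^*(u_n)}\,(\,a^*(u_n)\EE[\widehat M_{n,u_n}(\cdot,p)]-d(\bm s)\,c(\cdot,p)\,)\to0$ uniformly in $(\bm v,\bm s,\beta)$. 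Since $\EE m_n=a^*(u_n)\EE[\widehat M_{n,u_n}]$, combining this with $\sqrt{n/a^*(u_n)}(m_n-\EE m_n)=G_n\rightsquigarrow G$ and a (deterministic) asymptotic-equivalence argument shows that the debiased processes
\[
 A_n^{(p)}:=\sqrt{\tfrac{n}{a^*(u_n)}}\bigl(m_n(\bm v,\bm s,\beta,p)-d(\bm s)\,c(\bm v,\bm s,\beta,p)\bigr)
\]
converge weakly to $G(\bm v,\bm s,\beta,p)$, jointly over $p\in K\cup\{0\}$ and uniformly in $(\bm v,\bm s,\beta)$.

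Next I would check that the normalisation is stable. Since $\sqrt{n/a^*(u_n)}\to\infty$, the case $p=0$ of the previous step gives $m_n(\bm v,\bm s,\beta,0)\to_p d(\bm s)$ uniformly in $\bm s$; moreover $d$ is bounded away from $0$ and from $\infty$ on $A_\delta$, because $s_i\in[(1+\delta)^{-1},1+\delta]$ together with $\|\bm\Theta\|=1$ a.s.\ yields $(1+\delta)^{-1}\ell_I(\bm\Theta)\le\|\bm s\circ\bm\Theta\|\le1+\delta$, hence $d(\bm s)\in[(1+\delta)^{-1}\tau_I,(1+\delta)\tau]\subset[(1+\delta)^{-1},(1+\delta)d]$ for $\bm s\in A_{\delta,I}$, and $A_\delta$ is a finite union of such pieces. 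Likewise $c(\bm v,\bm s,\beta,p)\in[0,1]$ since $\bm v^\top\bigl(\tfrac{\bm s\circ\bm\Theta}{\|\bm s\circ\bm\Theta\|}\bigr)^{1/\beta}\in[0,1]$ for $\bm v\in\partial B_1^+(\bm 0)$.

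Then the conclusion follows from the algebraic identity
\[
 \widetilde G_n(\bm v,\bm s,\beta,p)
 =\frac{\sqrt{n/a^*(u_n)}\bigl(m_n(\cdot,p)-c(\cdot,p)\,m_n(\cdot,0)\bigr)}{m_n(\cdot,0)}
 =\frac{A_n^{(p)}-c(\bm v,\bm s,\beta,p)\,A_n^{(0)}}{m_n(\bm v,\bm s,\beta,0)},
\]
valid because $m_n(\cdot,p)-c(\cdot,p)m_n(\cdot,0)=(m_n(\cdot,p)-d\,c(\cdot,p))-c(\cdot,p)(m_n(\cdot,0)-d)$. By the first step the numerator converges weakly in $\ell^\infty(\partial B_1^+(\bm 0)\times A_\delta'\times K)$ to $G(\cdot,p)-c(\cdot,p)G(\cdot,0)$ (continuous mapping, as $c$ is bounded and $g\mapsto(g(\cdot,p)-c(\cdot,p)g(\cdot,0))_{p\in K}$ is a continuous linear map), while by the second step the denominator converges in probability, uniformly, to the deterministic function $d$ that is bounded away from $0$. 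An application of Slutsky's lemma in $\ell^\infty$ --- equivalently, Hadamard differentiability of the quotient map at a point whose denominator is bounded away from $0$ --- then yields weak convergence of $\widetilde G_n$ to
\[
 \widetilde G(\bm v,\bm s,\beta,p):=\frac{G(\bm v,\bm s,\beta,p)-c(\bm v,\bm s,\beta,p)\,G(\bm v,\bm s,\beta,0)}{\tau\,\EE[\|\bm s\circ\bm\Theta\|]},
\]
which, being a continuous linear image of the tight centered Gaussian process $G$, is itself a tight centered Gaussian process (its covariance being obtained by substituting \eqref{eq:cov-g} into the displayed expression), as claimed.

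I expect the only genuine subtlety to be the first two steps: one has to recognise that the bias hypothesis \eqref{eq:bias-negligible}, and in particular its instance $p=0$, is exactly what is needed both to upgrade the functional CLT for $m_n-\EE m_n$ into one for the recentred $m_n-d\,c$ and to force the normalisation $m_n(\cdot,0)$ to converge in probability, uniformly in $\bm s$, to a strictly positive bounded deterministic limit. Everything after that is a routine continuous mapping / delta-method manipulation, the only recurring technical point being to keep all convergences uniform in $(\bm v,\bm s,\beta)$.
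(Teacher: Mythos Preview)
Your proposal is correct and follows essentially the same route as the paper: apply Theorem~\ref{thm:fclt} with $K_0=K\cup\{0\}$, use the bias condition~\eqref{eq:bias-negligible} to recenter at $d(\bm s)c(\cdot,p)$, and then pass to the ratio via the functional delta method/Slutsky. The only cosmetic difference is that the paper secures the uniform lower bound on the denominator by monotonicity of $\bm s\mapsto\widehat P_{n,u_n}(\bm s)$ (reducing to the single point $(1+\delta)^{-1}\bm 1_I$) and an explicit $\varepsilon\vee\{\cdot\}$ truncation before invoking Hadamard differentiability, whereas you obtain it from the uniform convergence in probability $m_n(\cdot,0)\to d$ together with your direct bound $d(\bm s)\ge(1+\delta)^{-1}\tau_I\ge(1+\delta)^{-1}$; both arguments justify the same continuous-mapping step.
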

The proof of Cor.~\ref{coro:fclt-ratio} is deferred to Appendix \ref{app:cor6}. The expression of the covariance of $\widetilde G$ is provided there in Eq.~\eqref{eq:covar-tildeg}.
\begin{remark} \label{rem:joint-conv-cont}
\begin{enumerate}
\item For the original estimator with deterministic threshold as given in
    \eqref{eq:def-ratio-estimator}, it is sufficient to restrict to
    the case that $\bm s = \bm t = \bm 1_I$ and $\bm \beta = \bm 1$ in Cor.~\ref{coro:fclt-ratio}. 
	Then, we have 
	$\|\bm s \circ \bm \Theta\|_\infty = \|\bm t \circ \bm \Theta\|_\infty
	= \ell_I(\bm \Theta)$ and the covariance of $\widetilde G$ given in
	\eqref{eq:covar-tildeg} simplifies to
	\begin{align*}
	&\Cov\left(\widetilde G(\bm v, \bm 1_I, \bm 1, p_1), \widetilde G(\bm w, \bm 1_I, \bm 1, p_2)\right) \\
	={}& \frac{1}{\tau \EE \ell_I(\bm \Theta)} \cdot \bigg( \frac 1 {\EE \ell_I(\bm \Theta)} 
	\EE\left[ \left(\bm v^\top \frac{\bm \Theta_I}
	{\ell_I(\bm \Theta)}\right)^{p_1} \cdot \left(\bm w^\top \frac{\bm \Theta_I}{\ell_I(\bm \Theta)}\right)^{p_2} \cdot \ell_I(\bm \Theta)\right] \\
	& \hspace{1.5cm}
	- \frac 1 {(\EE \ell_I(\bm \Theta))^2} \EE\left[ \left(\bm v^\top \frac{\bm \Theta_I}{\ell_I(\bm \Theta)}\right)^{p_1} \cdot \ell_I(\bm \Theta)\right]
	\EE\left[ \left(\bm w^\top \frac{\bm \Theta_I}{\ell_I(\bm \Theta)}\right)^{p_2} \cdot \ell_I(\bm \Theta)\right] \bigg) \\
	={}& \frac 1 {\tau_I} \Cov\left( (\bm v^\top \bm \Theta_I^I)^{p_1},
	 (\bm w^\top \bm \Theta_I^I)^{p_2} \right).
	\end{align*}
\item Using the same techniques as in the proof of Cor.~\ref{coro:fclt-ratio}, one can easily show the joint weak convergence of 
	$(G_n(\bm v, \bm s,\bm \beta, p_1),\widetilde G_n(\bm w, \bm t,\bm \gamma, p_2))_{n\in \NN}$  $(\bm v, \bm s,\bm \beta, p_1),$ $(\bm w, \bm t,\bm \gamma, p_2)$ in $\partial B_1^+(\bm 0) \times A_\delta' \times K$ to $(G(\bm v, \bm s,\bm \beta, p_1),\tilde G(\bm w, \bm t, \bm \gamma,p_2))$ which are jointly Gaussian. Then for $\bm \beta=\bm \gamma=\bm 1$, the covariances express as
	\begin{align*} 
	  & \Cov(G(\bm v, \bm s,\bm 1, p_1), \widetilde G(\bm w, \bm t,\bm1, p_2)) \nonumber \\ 
	 ={}& \frac{
	   \EE\Big[ \Big(\bm v^\top \frac{\bm s \circ \bm \Theta}{\|\bm s \circ \bm \Theta\|} \Big)^{p_1}
	   \Big(\bm w^\top \frac{\bm t \circ \bm \Theta}{\|\bm t \circ \bm \Theta\|} \Big)^{p_2} ( \|\bm s \circ \bm \Theta\| \wedge \|\bm t \circ \bm \Theta\|)\Big]}
	   {\EE[\|\bm t \circ \bm \Theta\|]} \nonumber \\
	   & - \frac{\EE\Big[ \Big(\bm v^\top \frac{\bm s \circ \bm \Theta}{\|\bm s \circ \bm \Theta\|} \Big)^{p_1}
	   ( \|\bm s \circ \bm \Theta\| \wedge \|\bm t \circ \bm \Theta\|)\Big]
	   \EE\Big[\Big(\bm w^\top \frac{\bm t \circ \bm \Theta}{\|\bm t \circ \bm \Theta\|} \Big)^{p_2} \|\bm t \circ \bm \Theta\|\Big]
	}{(\EE[\|\bm t \circ \bm \Theta\|])^2}.
	\end{align*}
\item From the proofs of Thm.~\ref{thm:fclt} and Cor.~\ref{coro:fclt-ratio}, it follows that, for fixed $p \in \NN_0$, the sequences of processes $G_n(\cdot,\cdot,\cdot,p)$ and $\widetilde G_n(\cdot,\cdot,\cdot,p)$ are asymptotically uniformly equicontinuous on $\partial B_1^+(\bm 0) \times A_\delta'$ w.r.t.\ the semimetric 
	$$ \rho((\bm v, \bm s, \bm \beta), (\bm w, \bm t, \bm \gamma)) = \max\{\|\bm v - \bm w\|, \|\bm s - \bm t\|,\|\bm \beta - \bm \gamma\|\}.$$
	Consequently, almost all sample paths of the tight limits $G$
	and $\widetilde G$, respectively, are uniformly $\rho$-continuous
	\citep[cf.\ Addendum 1.5.8 in][]{vdv-wellner-1996}. Since $\rho$ induces the
	standard topology on $\partial B_1^+(\bm 0) \times A_\delta'$ as subset of
	$\RR^{d}\times \RR^{|I|} \times \RR^{d}$, it follows that almost all sample paths of
	$G(\cdot,\cdot,\cdot,p)$ and $\widetilde G(\cdot, \cdot, \cdot, p)$ are uniformly continuous.
\end{enumerate}	
\end{remark}

\subsection{Asymptotic normality for random thresholds via order statistics}
\label{subsec:random-thresh}

Consider the general case of a random vector $\bm X$ that is non standard regularly varying such that $\bm X^* = (r_1^{-1} X_1^{\alpha_1}, \ldots,  r_d^{-1} X_d^{\alpha_d})$ is  regularly varying with index $1$ and satisfies \eqref{eq:vagueconv-norm}. In practice, the marginal distributions of $X_1,\ldots,X_d$ are unknown.
In particular, the indices $\bm\alpha=(\alpha_1,\ldots,\alpha_d)$ of regular variation and the scaling factors $r_1,\ldots,r_d$ that are needed for the transformation to  $\bm X^*$ cannot be assumed to be given.
Furthermore, the normalizing function $a^*(\cdot) \sim \PP(X^*_i > \cdot)$
is unknown.
To cope with these difficulties, we propose to work with an estimator $\widehat{\bm \alpha}$ of $\bm\alpha$ and  order statistics of $\bm X$.
More precisely, let $X_{k:n,i}$ denote the $k$th upper order statistic of
$X_{1i}, \ldots, X_{ni}$ for $i=1,\ldots,d$ and let $\bm X_{k:n}
= (X_{k:n,1}, \ldots, X_{k:n,d})$.  With no loss of generality we assume that $X_{k:n,i}>0$, $1\le i\le d$. We propose the following estimator
\begin{align*}
\frac{\widetilde{M}_{n,k,I}(\bm v,p)}{\widetilde{P}_{n,k,I}}
= \frac{n^{-1} \sum_{l=1}^n \left(\bm v_I^\top 
 \frac{(\bm X_{l}/\bm X_{k:n})^{\widehat{ \bm\alpha}}}{\ell_I((\bm X_{l}/\bm X_{k:n})^{\widehat{ \bm\alpha}})}\right)^p
	 \mathds 1\{\ell_I(\bm X_l / \bm X_{k:n}) > 1\}}
{n^{-1} \sum_{l=1}^n \mathds 1\{\ell_I(\bm X_l / \bm X_{k:n}) > 1\}} 
\end{align*}
for some $k < n$, where the ratio of vectors $\bm X_l / \bm X_{k:n}$
is to be interpreted componentwise. Expanding each of the  ratios $\bm X_l / \bm X_{k:n}$
by $r_1^{-1},\ldots,r_d^{-1}$, respectively, and taking the power of $\bm\alpha$ coordinatewise, it can be easily seen that 
\begin{align*}
\frac{\widetilde{M}_{n,k,I}(\bm v,p)}{\widetilde{P}_{n,k,I}}
= \frac{n^{-1} \sum_{l=1}^n \left(\bm v_I^\top 
	\frac{(\bm X^*_{l}/\bm X^*_{k:n})^{\widehat{\bm \alpha} /\bm \alpha}}{\ell_I((\bm X^*_{l}/\bm X^*_{k:n})^{\widehat{\bm \alpha} /\bm \alpha})}\right)^p
	\mathds 1\{\ell_I(\bm X^*_l / \bm X^*_{k:n}) > 1\}}
{n^{-1} \sum_{l=1}^n \mathds 1\{\ell_I(\bm X^*_l / \bm X^*_{k:n}) > 1\}},
\end{align*}
i.e.\ we can rewrite 
 \begin{equation} \label{eq:rel-deterministic-random}
 \frac{\widetilde{M}_{n,k,I}(\bm v,p)}{\widetilde{P}_{n,k,I}}
 = \frac{\widehat{M}_{n,u_n}(\bm v_I, u_n / \bm X^*_{k:n} \circ \bm 1_I,\bm \alpha / \widehat{\bm \alpha}, p)}{\widehat{P}_{n,u_n}(u_n / \bm X^*_{k:n} \circ \bm 1_I)}.
 \end{equation}
 For an appropriate sequence $\{k_n\}_{n \in \NN} \subset \NN$ with $k_n \to \infty$ such that
 $k_n / n \to 0$, we want to show that
 $$ \left(\left\{ \sqrt{k_n} \left( \frac{\widetilde{M}_{n,k_n,I}(\bm v,p)}
   {\widetilde{P}_{n,k_n,I}}
 - \EE[ (\bm v_I^\top \bm \Theta^I_I)^p] \right); \, \bm v \in \partial B_1^+(\bm 0) \right\}\right)_{n\in\NN} $$
 converges weakly to a Gaussian process.
The functional central limit theorem will essentially be proven by plugging the random scaling factor 
 $\bm s =  u_n / \bm X^*_{k:n} \circ \bm 1_I$ and the random power $\bm\beta = \bm \alpha / \widehat {\bm\alpha}$ 
 into the results of Subsection \ref{subsec:fixed-thresh} according to Eq.~\eqref{eq:rel-deterministic-random}. Provided that both the random $\bm s$ and the random $\bm\beta$ are asymptotically
 normal, we can use the functional central limit theorem given in Cor.~\ref{coro:fclt-ratio} to
 obtain the desired result.
 \medskip
 
 To this end, we will first specify our estimators $\widehat {\bm\alpha}_{n,k}=(\widehat\alpha_{n,k,1},\ldots,\widehat\alpha_{n,k,d})$ of $\bm\alpha$ which will be componentwise Hill estimators. Similarly to Section \ref{subsec:fixed-thresh}, we will first start with an estimator with a
 fixed sequence of thresholds $\{u_n\}_{n \in \NN}$ and normalized observations
 $\bm X^*_1,\ldots, \bm X^*_n$. To mimic the behaviour of the real non-normalized data $\bm X = (\bm r \circ \bm X^*)^{1/\bm\alpha}$ where $\bm r = (r_1,\ldots, r_d)$, we also allow for 
 a scaling factor $\bm s \in \bigcup_{i=1}^d A_{\delta,\{i\}}$, that is, a scaling factor with exactly one non-zero component, and take the logarithm of the scaled vector to the  coordinatewise power of $1/\bm\alpha$.
 Thus, for $\bm s \in A_{\delta, \{i\}}$, $1\le i\le d$, we obtain the estimator
 \begin{equation} \label{eq:hill-fixed}
 \frac{\widehat L_{n,u}(\bm s)}{\widehat P_{n,u}(\bm s)} =
 \frac{n^{-1} \sum_{l=1}^n \log\left( \left\| \left(\bm s \circ \frac{\bm X_l^*}{u_n}\right)^{1/\bm\alpha} \right\| \right) \mathds 1\{\|\bm s \circ \bm X_l^*\| > u_n\}}{\widehat P_{n,u}(\bm s)}. 
 \end{equation}
 
 Under the assumptions of Thm.~\ref{thm:fclt}, it can be shown that for $\bm s \in A_{\delta, \{i\}}$, $1\le i\le d$,
\begin{align*}
a^*(u_n) \EE[\widehat L_{n,u_n}(\bm s)] ={}& \frac{\PP(s_i X_i^* > u_n)}{\PP(X_i^* > u_n)} \cdot
\EE\Big[\frac 1 {\alpha_i} \log\Big(\frac{s_i X^*_i}{u_n}\Big)
\, \Big| \, s_i  X^*_i > u_n\Big] 
\to{} \frac {s_i}{\alpha_i} \quad (n \to \infty)\, .
\end{align*}

In order to neglect the bias we assume the following second order condition from \cite{dehaan-ferreira-2006}:

There exists an auxiliary positive function $A^*_i$ such that $A^*_i(t)\to 0$ as $t\to \infty$ and 
\begin{equation}\label{cond:secorder}
\lim_{x\to\infty}\dfrac{a^*(x)\PP(X_i^*>xs)-s^{-1}}{A^*_i( a^*(x))}=K_i(s)
\end{equation}
where $K_i$ is not identically $0$ for any $1\le i\le d$.

Now if we choose $u_n$ such that $u_n\to \infty$ and  $\sqrt{n/a^*(u_n)}A^*_i( a^*(u_n))\to 0$ as $n\to \infty$ for some $1\le i\le d$ we obtain\
 \begin{equation} \label{eq:bias-hill}
\sup_{\bm s \in A_{\delta,\{i\}}} \sqrt{\frac{n}{a^*(u_n)}} \left| a^*(u_n) \EE[\widehat L_{n,u_n}(\bm 1_{\{i\}},\bm s)] - \frac{s_i}{\alpha_i}\right| \to\,, \qquad n\to \infty\,,
\end{equation}
as well as
\begin{equation} \label{eq:one-dim-bias}
\sup_{(1+\delta)^{-1} \leq s_i  \leq 1 +\delta} 
 \sqrt{\frac{n}{a^*(u_n)}} \left| a^*(u_n) \PP(s_i  X^*_i > u_n) - s_i  \right| 
\to 0\,, \qquad n\to \infty\,,
\end{equation}
which is nothing else than  Eq.~\eqref{eq:bias-negligible} for $p=0$ and $\bm s \in  A_{\delta, \{i\}}$, $\delta > 0$.
As Eq.~\eqref{eq:bias-hill} and \eqref{eq:one-dim-bias} ensure that the biases of the marginal Hill estimators become asymptotically negligible, the second-order condition \eqref{cond:secorder} will be sufficient for us to establish asymptotic normality of our final marginal Hill  estimators $\widehat \alpha_{k,n,i}$ defined via order statistics
 \begin{equation} \label{eq:hill-def}
 \frac 1 {\widehat \alpha_{n,k,i}} = \frac{n^{-1} \sum_{l=1}^{n} \log\left( \ell_{\{i\}}(\bm X_l / \bm X_{k:n}) \right) \mathds 1 \left\{ \ell_{\{i\}}(\bm X_l / \bm X_{k:n}) > 1\right\}  }{\widetilde P_{n,k,\{i\}}}
 \end{equation}
 in the following theorem. A detailed proof is given in Appendix \ref{app:th8}.

 \begin{theorem} \label{thm:an-hill}
Let $\bm X_l$, $l \in \NN$, be independent copies of a non-standard regularly varying
$[0,\infty)^d$-valued random vector $\bm X$.
Assume that the  vector $\bm X^* = (r_1^{-1} X_1^\alpha,\ldots, r_d^{-1} X_d^\alpha)$ satisfies Eq.~\eqref{eq:a-normal} and \eqref{cond:secorder}. Let $\{k_n\}_{n \in \NN} \subset \NN$ be a sequence such that $k_n \to \infty$, $k_n/n\to0$ and  $\sqrt{k_ n}A^*_i( n/k_n)\to 0$  for all $1\le i\le d$.  Then we have
$$ \sqrt{k_n} \left( {\widehat {\bm \alpha}_{n,k_n}}^{-1} - {\bm \alpha}^{-1}\right) \to_d \widetilde {\bm H}, \qquad n \to \infty,$$
where $\widetilde {\bm H}$ is a $d$-dimensional centered Gaussian random vector with
covariances
$$ \Cov(\widetilde H_i, \widetilde H_j) = \frac{\tau}{\alpha_i \alpha_j} \EE\left[ \Theta_i \wedge \Theta_j\right] = \frac{2- \tau_{ij}}{\alpha_i \alpha_j}. $$ 
\end{theorem}

This result is in line to results for the joint asymptotic behaviour of marginal Hill estimators
obtained in \citet{stupfler-2019}.

\begin{remark} \label{rem:joint-conv-h}
	We can show joint convergence of the different estimates to the limit processes $G$ and $\widetilde G$ and the variables $\widetilde H_i$, $i=1,\ldots,d$, as obtained in Thm.~\ref{thm:fclt}, Cor.~\ref{coro:fclt-ratio} and Thm.~\ref{thm:an-hill}, respectively.
	The covariance between the processes $G$ and $\widetilde G$ is given in  Rem.~\ref{rem:joint-conv-cont}.2.
	Similar calculations as above reveal that, for all $\bm v \in \partial B_1^+(\bm 0)$,  $J \subset \{1,\ldots,d\}$ and $p \in \NN_0$,
	\begin{align} \label{eq:joint-covar-h-g}
	 \Cov(\widetilde H_i, G(\bm v, \bm 1_{J}, \bm 1, p)) 
	={}& \frac{\tau}{\alpha_i} \EE\Big[- \log\Big( 1 \wedge \frac{\| \bm \Theta_J\|}{ \Theta_i} \Big) \cdot \Big(\bm v^\top \frac{\bm \Theta_J}{\|\bm \Theta_J\|}\Big)^p \cdot (\Theta_i\wedge \|\bm \Theta_J\|)\Big].
	\end{align}
	Here, using similar arguments as above, we obtain
	\begin{align} \label{eq:joint-covar-h-tildeg}
	& \Cov(\widetilde H_i, \widetilde G(\bm v, \bm 1_{J}, \bm 1, p)) \\
	={}& \frac{1}{\alpha_i \EE[\|\bm \Theta_J\|]} 
	\bigg( \EE\left[ - \log\Big( 1 \wedge \frac{\| \bm \Theta_J\|}{ \Theta_i} \Big) \left(v^\top \frac{\bm \Theta_J}{\|\bm \Theta_J\|}\right)^p  (\Theta_i \wedge \bm \Theta_J)\right] \nonumber \\
	& \qquad \qquad \qquad - 
		\EE\left[- \log\Big( 1 \wedge \frac{\| \bm \Theta_J\|}{ \Theta_i} \Big) (\Theta_i \wedge \bm \Theta_J) \right] \frac{\EE[\|\bm \Theta_J\| \left(v^\top \frac{\bm \Theta_J}{\|\bm \Theta_J\|}\right)^p]}{\EE[\|\bm \Theta_J\|]} \bigg) \nonumber
	\end{align}
	for all $\bm v \in \partial B_1^+(\bm 0)$  and $p \in \NN_0$. 
	In particular, this implies that $\Cov(\widetilde H_i, \widetilde G(\bm v, \bm 1_{J}, \bm 1, p)) = 0$ if $i \in J$. 
\end{remark}
 
Making use of the asymptotic results for the Hill-type estimator in Thm.~\ref{thm:an-hill} and for the estimators in the case of deterministic thresholds given in Section \ref{subsec:fixed-thresh}, we will establish a functional 
central limit theorem for $\widetilde{M}_{n,k,I}(\bm v_I,p) / \widetilde{P}_{n,k,I}$. Compared to its analogue for deterministic thresholds,
the estimator is blurred by the random marginal normalization based on the order statistics. To ease notation, we just write $G^0(\bm s) = G(\cdot, \bm s, \cdot, 0)$.

Imposing some additional conditions on the function $c$, we obtain the  
following main result.

\begin{theorem} \label{thm:fclt-ratio-rank}
    Let $\bm X_l$,	$l \in \NN$, be independent copies of a non-standard regularly varying
	$[0,\infty)^d$-valued random vector $\bm X$ satisfying the assumptions of Cor.~\ref{coro:fclt-ratio} and Thm.~\ref{thm:an-hill} for $a^*(u_n) \sim  n/k_n$, some $\delta > 0$ and some $K \subset \NN$. Assume that all the partial derivatives
	\begin{align*}
	 c_{s_i}(\bm v, \bm s,\bm \beta, p) ={}& \frac{\partial}{\partial s_i}c(\bm v, \bm s, \bm \beta, p),
	\qquad i \in \{1,\ldots,d\}, \ p \in K,\\
	c_{\beta_i}(\bm v, \bm s, \bm \beta, p) ={}& \frac{\partial}{\partial \beta_i}c(\bm v, \bm s, \bm \beta, p),\qquad i \in \{1,\ldots,d\}, \ p \in K,
	\end{align*}
	exist and are continuous on $\partial B_1^+(\bm 0) \times A_{\delta}'$.
	Then,  for non-empty $I \subset \{1,\ldots,d\}$, we have
	\begin{align*}
	 & \left\{ \sqrt{k} \left( \frac{\widetilde{M}_{n,k,I}(\bm v_I,p)}{\widetilde{P}_{n,k,I}} - c(\bm v_I, \bm 1_I, \bm1, p)\right); \ \bm v \in \partial B_1^+(\bm 0), \,  p \in K \right\}\\
	  & {}\qquad \longrightarrow
	\left\{ \widetilde G(\bm v_I,\bm 1_I,\bm 1,p) - \sum\nolimits_{i \in I}\left( c_{s_i}(\bm v_I, \bm 1_I, \bm 1, p)  G^0(\bm 1_{\{i\}}) + c_{\beta_i}(\bm v_I, \bm 1_I, \bm 1, p) \bm \alpha_i\widetilde H_i\right) \right\}
	\end{align*}
	weakly in $\ell^\infty(\partial B_1^+(\bm 0) \times K)$ as $n\to \infty$.
\end{theorem}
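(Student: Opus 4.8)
The plan is to exploit the representation \eqref{eq:rel-deterministic-random}, which writes the random-threshold estimator as the deterministic-threshold ratio estimator of Corollary \ref{coro:fclt-ratio} evaluated at the \emph{random} scaling vector $\bm s_n := u_n/\bm X^*_{k_n:n} \circ \bm 1_I$ and the \emph{random} power $\beta_n := \alpha/\widehat\alpha_{n,k_n,I}$. Since $u_n \sim (F^*_i)^{-1}(1-k_n/n)$ forces $a^*(u_n)\sim n/k_n$, I would replace $\sqrt{n/a^*(u_n)}$ by $\sqrt{k_n}$ throughout; moreover consistency of the order statistics and of the Hill-type estimator gives $\bm s_n \to_p \bm 1_I$ and $\beta_n \to_p 1$. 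Recall that $G_n(\cdot,\bm s,\cdot,0)=:G^0_n(\bm s)$ depends on neither its first nor its third argument, and that by hypothesis Corollary \ref{coro:fclt-ratio} and Theorem \ref{thm:an-hill} apply.

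The first step is to linearise the two random arguments. For the scaling vector I would use a Vervaat-type inversion: for each $i\in I$, consider the tail empirical process $s\mapsto a^*(u_n)\widehat P_{n,u_n}(s\,\bm 1_{\{i\}}) = a^*(u_n)n^{-1}\sum_{l=1}^n\bm 1\{sX^*_{li}>u_n\}$ over $s\in[(1+\delta)^{-1},1+\delta]$, whose mean tends to $s$ by \eqref{eq:a-normal}, whose preasymptotic bias is $o_p(k_n^{-1/2})$ by \eqref{eq:one-dim-bias}, and whose centred fluctuations are exactly $G^0_n(s\,\bm 1_{\{i\}})$, converging weakly in $s$ by Theorem \ref{thm:fclt}. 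Since $X^*_{k_n:n,i}>u_n/s \iff a^*(u_n)n^{-1}\sum_l\bm 1\{sX^*_{li}>u_n\}\ge a^*(u_n)k_n/n$ and $a^*(u_n)k_n/n\to1$, inverting this process at level $1$ and using that its limiting mean $s\mapsto s$ has unit derivative at $s=1$ yields
$$ \sqrt{k_n}\,(s_{n,i}-1) = \sqrt{k_n}\Bigl(\frac{u_n}{X^*_{k_n:n,i}}-1\Bigr) = -\,G^0_n(\bm 1_{\{i\}}) + o_p(1)\ \to_d\ -\,G^0(\bm 1_{\{i\}}),\qquad i\in I. $$
For the power, $\beta_n-1 = \alpha(1/\widehat\alpha_{n,k_n,I}-1/\alpha)$ exactly, so $\sqrt{k_n}(\beta_n-1)\to_d\alpha\widetilde H_I$ by Theorem \ref{thm:an-hill}. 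By Remark \ref{rem:joint-conv-h}, all of these converge jointly with $\widetilde G_n$ and among themselves to a jointly Gaussian limit.

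The second step is a decomposition together with the functional delta method. Writing
$$ \sqrt{k_n}\Bigl(\frac{\widetilde M_{n,k_n,I}(\bm v_I,p)}{\widetilde P_{n,k_n,I}}-c(\bm v_I,\bm 1_I,1,p)\Bigr) = \widetilde G_n(\bm v_I,\bm s_n,\beta_n,p) + \sqrt{k_n}\bigl(c(\bm v_I,\bm s_n,\beta_n,p)-c(\bm v_I,\bm 1_I,1,p)\bigr)+o_p(1), $$
the first term is handled by the asymptotic uniform $\rho$-equicontinuity of $\widetilde G_n(\cdot,\cdot,\cdot,p)$ for each $p\in K$ (Remark \ref{rem:joint-conv-cont}, part 3) and $(\bm s_n,\beta_n)\to_p(\bm 1_I,1)$: one gets $\sup_{\bm v,p}|\widetilde G_n(\bm v_I,\bm s_n,\beta_n,p)-\widetilde G_n(\bm v_I,\bm 1_I,1,p)|\to_p0$, so this term converges weakly in $\ell^\infty(\partial B_1^+(\bm 0)\times K)$ to $\widetilde G(\bm v_I,\bm 1_I,1,p)$ via Corollary \ref{coro:fclt-ratio}. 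For the second term, the assumed continuity — hence uniform continuity on the compact set $\partial B_1^+(\bm 0)\times A_\delta'$ — of the partial derivatives $c_i$ ($i\in I$) and $c_\beta$ licenses a first-order Taylor expansion of $c$ around $(\bm 1_I,1)$ with remainder $o_p(1)$ uniformly in $(\bm v,p)$ (the increments $\|\bm s_n-\bm 1_I\|,|\beta_n-1|$ vanish while their $\sqrt{k_n}$-scalings are tight), giving
$$ \sqrt{k_n}\bigl(c(\bm v_I,\bm s_n,\beta_n,p)-c(\bm v_I,\bm 1_I,1,p)\bigr) = \sum_{i\in I}c_i(\bm v_I,\bm 1_I,1,p)\sqrt{k_n}(s_{n,i}-1) + c_\beta(\bm v_I,\bm 1_I,1,p)\sqrt{k_n}(\beta_n-1)+o_p(1) $$
uniformly in $(\bm v,p)$. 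Substituting the limits of the first step and using the joint convergence of Remark \ref{rem:joint-conv-h} yields the stated weak limit; tightness of the limit is immediate since the extra summands are finite linear combinations of the fixed Gaussian variables $G^0(\bm 1_{\{i\}})$, $\widetilde H_I$ with coefficient functions $c_i,c_\beta$ continuous in $(\bm v,p)$.

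I expect the main obstacle to be the first step: establishing the expansion $\sqrt{k_n}(u_n/X^*_{k_n:n,i}-1)=-G^0_n(\bm 1_{\{i\}})+o_p(1)$ \emph{jointly} with the weak convergence of $\widetilde G_n$ and of the Hill-type estimator — essentially a functional Vervaat lemma for the tail empirical quantile process, in which the uniformity over the scaling direction built into Theorem \ref{thm:fclt} and the one-dimensional bias condition \eqref{eq:one-dim-bias} both enter in an essential way, and which must be carried out on a single probability space so that the final sum of limits is genuinely jointly Gaussian. By comparison, the uniformity in $\bm v$ and $p$ of the delta-method remainder is routine.
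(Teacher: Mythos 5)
Your proposal is correct and follows essentially the same route as the paper: the identity \eqref{eq:rel-deterministic-random}, the Vervaat-lemma linearisation $\sqrt{k_n}(u_n/X^*_{k_n:n,i}-1)\to_d -G^0(\bm 1_{\{i\}})$ (the paper's Lemma \ref{lem:orderstats}), the asymptotic equicontinuity of $\widetilde G_n$ combined with $(\bm s_n,\beta_n)\to_p(\bm 1_I,1)$ to replace the random arguments (the paper's Proposition \ref{prop:fclt-orderstats}, done there via Skorohod representation and continuity of the limit), and a mean-value/Taylor expansion of $c$ controlled by the assumed uniform continuity of $c_i$ and $c_\beta$. The decomposition into the centred process plus the drift term $\sqrt{k_n}(c(\bm v_I,\bm s_n,\beta_n,p)-c(\bm v_I,\bm 1_I,1,p))$ is exactly the paper's $D_1+D_2$ split, and your identification of the joint convergence (Remark \ref{rem:joint-conv-h}) as the point requiring care matches the paper's treatment.
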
	
The proof of Thm.~\ref{thm:fclt-ratio-rank} is deferred to Appendix \ref{app:th10}.

\begin{remark} \label{rem:asymp-variance-orderstats}
In the case $p=1$ we notice that the limiting process is linear in $\bm v_I$ because the process $\widetilde G$ and the functions $c_{s_i}$ and $c_{\beta_i}$ are all linear in $\bm v_I$:
\begin{align*}
 &\widetilde G(\bm v_I,\bm 1_I,\bm 1,1) - \sum\nolimits_{i \in I}\left( c_{s_i}(\bm v_I, \bm 1_I, \bm 1, 1)  G^0(\bm 1_{\{i\}}) + c_{\beta_i}(\bm v_I, \bm 1_I, \bm 1, 1) \bm \alpha_i\widetilde H_i\right)\\
&=\sum_{j \in I}v_j  \left( \widetilde G(\bm 1_{j},\bm 1_I,\bm 1,1) -  \sum\nolimits_{i \in I}\left( c_{s_i}(\bm 1_{j}, \bm 1_I, \bm 1, 1)  G^0(\bm 1_{\{i\}}) + c_{\beta_i}(\bm 1_{j}, \bm 1_I, \bm 1, 1) \bm \alpha_i\widetilde H_i\right)\right)\,.
\end{align*}
\end{remark}

\section{Application: Estimation of extremal coefficients} \label{sec:extr-coeff}

One of the most popular summary statistics for extremal dependence in the (sub-)vector $(X^*_i)_{i \in I}$
is the extremal coefficient $\tau_I$ given by Eq.~\eqref{eq:norm-const-subset}, i.e.\
$$ \tau_I = \lim_{u \to \infty} a^*(u) \PP\left(\max_{i \in I} X^*_i > u\right)
  \sim \frac{\PP(\max_{i \in I} X^*_i > u) }{\PP(X^*_j > u) }\,, 
  \quad \text{ for all } j \in I.$$
From this representation and the union bound for $\PP(\max_{i \in I} X^*_i > u)$, it can be easily seen that $\tau_I \in [1,|I|]$ where $\tau_I= 1$ corresponds to the case that all the components of $(X_i)_{i \in I}$ are 
(asymptotically) fully dependent, while they are all asymptotically independent if $\tau_I = |I|$. This gives rise to the common interpretation of the extremal coefficient as the number of asymptotically independent variables among $(X_i^*)_{i \in I}$ \citep[cf.][]{schlather-tawn-2003}. Even though, in general, the spectral measure of the vector $\bm X = (X_1,\ldots,X_d)$ cannot be fully described by the set of extremal coefficients $(\tau_I)_{I \subset \{1,\ldots,d\}}$, they allow for the identification of the parameters in many popular parametric families and, consequently, statistical efficiency in the estimation of the extremal coefficients is expected to carry over to the estimation of the corresponding model parameters. Illustrations on the H\"usler-Reiss are given in Section \ref{sec:illustr}.

From Eq.~\eqref{eq:mean-spectral-ell}, we obtain that
$$ \EE\left[ \bm v_I^\top \bm \Theta^I \right]
= \EE\left[ (\bm v_I^\top \bm \Theta^I)_+ \wedge 1 \right] = \frac 1 {\tau_I}\,, $$
for all $\bm v_I \in \partial B_1^+(\bm 0)$. This motivates the use of the
two classes of estimators considered in Eq.~\eqref{eq:def-benchmark-estimator} and Eq.~\eqref{eq:def-ratio-estimator}, respectively. We will distinguish two cases; Section \ref{sec:deterministic} collects the asymptotic normality results for the case of known marginal distributions where we can exploit the results for deterministic thresholds given in Section \ref{subsec:fixed-thresh} with $\bm s = \bm 1_I$ and $\beta=1$. The results derived in Section \ref{subsec:random-thresh} can be used when the margins are unknown as we will investigate in Section \ref{sec:random}. 

\subsection{Known margins}\label{sec:deterministic}

In this section, we focus on the case where the margins are known, i.e.~$\bm X_i^*$ is observed.

\subsubsection{The inefficiency of the benchmark}

For this case, as a first class of estimators, we can directly consider the estimators based on relative frequencies of threshold
exceedances by convex combinations, see Eq.~\eqref{eq:def-benchmark-estimator}, namely
$$  \frac{\widehat{P}_{n,u,I}^{\mathrm{(conv)}}(\bm v_I)}{\widehat{P}_{n,u,I}}
  = \frac{n^{-1} \sum_{l=1}^n \mathbf{1}\{\bm v_I^\top \bm X^*_l > u\}}
         {n^{-1} \sum_{l=1}^n \mathbf{1}\{\ell_I(\bm X^*_l) > u\} } 
  , \qquad \bm v_I \in \partial B_1^+(\bm 0), $$
where we use the fact that $\bm v_I^\top \bm X^{*} > u$ already implies that
$\ell_I(\bm X^*) > u$. This type of estimator will serve as a benchmark.
By standard arguments, for a sequence $\{u_n\}_{n \in \NN}$ of thresholds such that $u_n \to \infty$ and $n / a^*(u_n) \to \infty$ as $n \to \infty$, we obtain the asymptotic normality
\begin{align*}
\sqrt{\frac{n}{a^*(u_n)}} \left( \begin{array}{c} 
    a^*(u_n) \widehat{P}^{\mathrm{(conv)}}_{n,u_n,I}(\bm v_I) 
  - a^*(u_n) \PP(\bm v_I^\top \bm X^* > u_n)\\
    a^*(u_n) \widehat{P}_{n,u_n,I} - a^*(u_n) \PP(\ell_I(\bm X^*) > u_n)
\end{array} \right)
\stackrel{d}{\longrightarrow} \mathcal{N}( \bm 0, \bm \Sigma),
\end{align*}
where 
\begin{align*} 
 \bm \Sigma =
 \begin{pmatrix} 
 \tau_I \EE[\bm v^\top_I \bm \Theta^I] & \tau_I \EE[\bm v^\top_I \bm \Theta^I] \\
 \tau_I \EE[\bm v^\top_I \bm \Theta^I] & \tau_I \end{pmatrix}
 = \begin{pmatrix} 1 & 1 \\ 1 & \tau_I \end{pmatrix}.
\end{align*}
Assuming that the bias is negligible, i.e.\
$$ \sqrt{\frac{n}{a^*(u_n)}} \left( a^*(u_n) \PP(\bm v_I^\top \bm X^* > u_n)
   - 1 \right) \to 0$$
and 
\begin{equation} \label{eq:bias-extr-coeff}
\sqrt{\frac{n}{a^*(u_n)}} \left( a^*(u_n) \PP(\ell_I(\bm X^*) > u_n) - \tau_I \right) \to 0,
\end{equation}
the Delta method yields asymptotic normality of the benchmark estimator for the case of known margins,
$$ \frac{1}{\widehat{\tau}_{I,n}^{BK}(\bm v_I)} =  \frac{\widehat{P}^{\mathrm{(conv)}}_{n,u_n,I}(\bm v_I)}{\widehat{P}_{n,u_n,I}}, \quad \bm v \in \partial B_1^+(\bm 0),$$
i.e.
 $$ \sqrt{\frac{n}{a^*(u_n)}} \left(\frac{1}{\widehat{\tau}_n^{BK}(\bm v_I)} - \frac 1 {\tau_I} \right)
\stackrel{d}{\longrightarrow} \mathcal{N}(0, \tau_I^{-3} (\tau_I-1)). $$
Here, we note that the asymptotic variance does not depend on the choice of $\bm v \in \partial B_1^+(\bm 0)$. 
 \medskip
 
Secondly, we consider moment-based estimators as discussed in Section \ref{subsec:fixed-thresh}, i.e.\ estimators based on empirical means of convex combinations, namely
$\widehat{M}_{n,u,I}(\bm v_I,1) / \widehat{P}_{n,u,I}$ with weights $\bm v_I \in \partial B_1^+(\bm 0)$
as defined in Eq.~\eqref{eq:def-ratio-estimator}. Provided that the bias is negligible, i.e.\
$$ \sqrt{\frac{n}{a^*(u_n)}} \left( a^*(u_n) \EE\left[\bm v_I^\top \frac{\bm X^*}{\ell_I(\bm X^*)} \cdot \mathds 1\{\ell_I(\bm X^*)  > u_n\}\right]
- 1 \right) \to 0$$
and \eqref{eq:bias-extr-coeff}, we obtain the asymptotic normality
 $$ \sqrt{\frac{n}{a^*(u_n)}} \left( \frac{\widehat{M}_{n,u_n,I}(\bm v_I)}
 {\widehat{P}_{n,u_n,I}} - \frac 1  {\tau_I} \right)
\stackrel{d}{\longrightarrow} 
\mathcal{N}\left(0,  \tau_I^{-1} \Var(\bm v_I^\top \bm \Theta^I)\right),$$
 cf.\ the results in Cor.~\ref{coro:fclt-ratio} and the first part of Rem.~\ref{rem:joint-conv-cont}.
\medskip

Since $\bm v_I^\top \bm \Theta^I \leq 1$ a.s., we have that
$$ \frac 1 {\tau_I} \Var(\bm v_I^\top \bm \Theta^I)
 = \frac 1 {\tau_I} \left(\EE[(\bm v_I^\top \bm \Theta^I)^2] - \frac 1 {\tau_I^2}\right)
 \leq{} \frac 1 {\tau_I} \left( \EE[(\bm v_I^\top \bm \Theta^I)] - \frac 1 {\tau_I^2}  \right) = \frac{\tau_I - 1}{\tau_I^3} $$
and, therefore, the asymptotic variance of the moment-based approach is always less than or equal to the asymptotic variance of the benchmark estimator. Notice that the variances of both
estimators are equal, i.e.\ $\EE[(\bm v_I^\top \bm \Theta^I)^2] = \EE[\bm v_I^\top \bm \Theta^I] = 1/\tau_I$, if and only if $\Theta_i^I \equiv 1$ a.s.\ for all $i$ with $v_i > 0$. This is the case if and only if $\tau_I =1$ and then both asymptotic variances are equal to $0$. Thus, except for this degenerate case, the moment-based approach is always preferable to the benchmark  in terms of the asymptotic variance.

Furthermore, in general, the asymptotic variance of the moment-based estimator depends on the vector $\bm v_I \in \partial B_1^+(\bm 0)$. As we will see in Example \ref{ex:minvar-known-margins}, the minimal variance is typically not achieved for a standard vector of the type $\bm v_{\{i\}}$ for $i \in I$, i.e.\ the empirical estimators of $\EE[\Theta_i]$, but by a non-trivial mixture of different basis vectors. This improvement can be attributed to the fact that the moment-based estimator is linear in $\bm v$. Thus optimizing in $\bm v_I \in \partial B_1^+(\bm 0)$ can be interpreted as aggregating different estimators. It is well-known that the use of a combination of estimators can improve the accuracy. We note that we could also aggregate the benchmark estimator for different $\bm v$. However this strategy is much more intricate to study as the benchmark estimator is not linear in $\bm v$. 

Based on this discussion, for the remainder of Section \ref{sec:deterministic}, we focus on convex combinations of moment-based estimators rather than on the benchmark. In the following, we optimize the asymptotic variance $\tau_I^{-1} \Var(\bm v_I^\top \bm \Theta^I)$ w.r.t.~the weight vectors $\bm v_I$ and we find some approximation of this best aggregation.  We should notice that the optimal asymptotic variance $\Var(\bm v_I^\top \bm \Theta^I)$ may be degenerate even when $\tau_I>1$. In these cases the results hereinafter still hold with a null limit.

\begin{remark} \label{rem:degen-limit}
The asymptotic variance of the moment-based estimator is zero if and only if there exists a weight vector $\bm v_I$ such that $\bm v_I^\top \bm \Theta_I^I=1/{\tau_I}$ almost surely. This happens in case of an asymptotically independent vector $\bm X$ with  $\bm v$ satisfying $v_i=\frac1{|I|}$ for $i\in I$. It might also happen in more complex cases. For instance, assume that the vector $\bm \Theta_I^I$ takes at only a finite number of different possible values $\bm \theta_I^{(j)}$, $j \in 1,\ldots,m$. Then, $\bm v_I$ is given as the solution of the linear system $\bm v_I^\top \bm \theta_I^{(j)} = 1 / \tau_I$, $j \in 1,\ldots,m$, provided that the solution is in the simplex.
\end{remark}

\subsubsection{Variance minimization}

We focus on the moment estimators   $\widehat{M}_{n,u,I}(\bm v_I,1) / \widehat{P}_{n,u,I}$ for different $\bm v_I \in \partial B_1^+(\bm 0)$. As all of these estimators are consistent, it is appealing to choose the estimator that provides the minimal asymptotic variance.
Thus, in the following, we will focus on determining the weight
vectors $\bm v^*_I \in \partial B_1^+(\bm 0)$ that minimize  the asymptotic variance 
$$  \tau_I^{-1} \Var(\bm v_I^\top \bm \Theta_I^I) 
  = \tau_I^{-3} \left(\tau_I^2 \cdot \EE[(\bm v_I^\top \bm \Theta_I^I)^2] - 1 \right),$$
or, equivalently, the function
$$ f^{(I)}: \ \bm v_I \mapsto \EE[(\bm v_I^\top \bm \Theta_I^I)^2]. $$
  We will show that, under mild conditions,
 the minimizer of $f^{(I)}$ in $\partial B_1^+(\bm 0)$ is unique. 

\begin{proposition} \label{prop:uniqueness}
Let $\bm \Theta$ be a spectral vector. Then,  for non-empty $I \subset \{1,\ldots,d\}$, it holds:
 If the matrix $\bm V_I = (\EE[\Theta^I_i \Theta^I_j])_{i,j \in I}$ is conditionally positive definite, i.e.
 $$ \bm a^\top \bm V_I \bm a > 0\,,$$
 for all $\bm a \in \RR^{|I|} \setminus \{\bm 0\}$ such that $\sum_{i \in I} a_i = 0$,
 then the function $\bm v_I \mapsto f^{(I)}(\bm v_I)$
 attains its minimum on $\partial B_1^+(\bm 0)$ at some unique  
 $\bm v^*_I \in \partial B_1^+(\bm 0)$.
\end{proposition}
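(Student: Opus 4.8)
The plan is to recognise $f^{(I)}$ as a quadratic form and to reduce the uniqueness claim to strict convexity of that form along the affine hull of the simplex. First I would write $\bm v_I = (v_i)_{i \in I}$ for the coordinates that matter and note
\[
 f^{(I)}(\bm v_I) = \EE\Big[\Big(\sum_{i \in I} v_i \Theta_i^I\Big)^2\Big] = \sum_{i,j \in I} v_i v_j \EE[\Theta_i^I \Theta_j^I] = \bm v_I^\top \bm V_I \bm v_I ,
\]
so that $f^{(I)}$ is a nonnegative quadratic form and the feasible set $\Delta_I = \{\bm v_I \in [0,\infty)^{|I|} : \sum_{i \in I} v_i = 1\}$ is compact and convex. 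Existence of a minimiser then follows at once from continuity of $f^{(I)}$ on the compact set $\Delta_I$; let $m$ denote the minimal value.

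For uniqueness I would argue by contradiction. Suppose $\bm v_I^{(1)} \neq \bm v_I^{(2)}$ both attain $m$ and put $\bm a = \bm v_I^{(1)} - \bm v_I^{(2)}$, so that $\bm a \neq \bm 0$ and $\sum_{i \in I} a_i = 0$ since both points lie in $\Delta_I$. The parallelogram identity for the quadratic form $\bm w \mapsto \bm w^\top \bm V_I \bm w$ gives
\[
 f^{(I)}\Big(\tfrac{\bm v_I^{(1)} + \bm v_I^{(2)}}{2}\Big)
 = \tfrac12 f^{(I)}(\bm v_I^{(1)}) + \tfrac12 f^{(I)}(\bm v_I^{(2)}) - \tfrac14 \, \bm a^\top \bm V_I \bm a
 = m - \tfrac14 \, \bm a^\top \bm V_I \bm a .
\]
Since $\bm a$ is a nonzero vector with zero coordinate sum, the conditional positive definiteness hypothesis gives $\bm a^\top \bm V_I \bm a > 0$; the midpoint lies in $\Delta_I$ by convexity, so it has value strictly below $m$, contradicting minimality. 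Hence the minimiser is unique.

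The argument itself is short; the point that deserves care — and the reason the hypothesis is stated as conditional rather than full positive definiteness — is the reduction in the first paragraph. The set $\Delta_I$ sits inside the affine hyperplane $\{\sum_{i\in I} v_i = 1\}$, whose direction space is exactly $\{\bm a : \sum_{i \in I} a_i = 0\}$, so one only needs — and only gets to use — positivity of $\bm V_I$ on that hyperplane. In particular $\bm V_I$ may genuinely be singular, for instance when some $\Theta_i^I$ is almost surely constant; conditional positive definiteness is precisely what excludes a flat direction of $f^{(I)}$ inside $\Delta_I$, and hence forces a single minimiser.
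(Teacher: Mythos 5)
Your proof is correct and follows essentially the same route as the paper: identify $f^{(I)}(\bm v_I)=\bm v_I^\top \bm V_I\bm v_I$ as a quadratic form, get existence from compactness, and derive uniqueness by showing that a convex combination of two distinct minimisers would have strictly smaller value because the difference vector has zero coordinate sum and conditional positive definiteness applies. The paper uses a general $\lambda\in(0,1)$ where you use the midpoint, but this is the same argument.
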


\begin{proof}
   First, we note that 
    $$ f^{(I)}(\bm v_I) = \sum\nolimits_{i \in I} \sum\nolimits_{j \in I}
     v_i v_j \EE(\bm \Theta_i^I \bm \Theta_j^I) 
    = (v_i)_{i \in I}^\top \bm V_I (v_i)_{i \in I}. $$
   By a slight abuse of notation, henceforth, we will write
   $f^{(I)}(\bm v_I) = \bm v_I^\top \bm V_I \bm v_I$.
   Now, assume that $f^{(I)}$ attains its minimum at two distinct points
   $\bm v_I \in \partial B_1^+(\bm 0)$ and $\bm w_I \in \partial B_1^+(\bm 0)$. Then, for all
   $ \lambda \in (0,1)$, we have $\lambda \bm v_I + (1-\lambda) \bm w_I
   \in \partial B_1^+(\bm 0)$, $\sum_{i \in I} (v_i-w_i) = 0$ and,
   therefore,
   \begin{align*}
   f^{(I)}(\lambda \bm v_I + &(1-\lambda) \bm w_I) ={}
  (\bm w_I + \lambda (\bm v_I - \bm w_I))^\top \bm V_I 
  (\bm v_I + (1-\lambda) (\bm w_I - \bm v_I)) \\
    ={}& - \lambda (1-\lambda) (\bm v_I - \bm w_I)^\top \bm V_I (\bm v_I - \bm w_I) + \lambda \bm v_I^\top \bm V_I \bm v_I + (1-\lambda) \bm w_I^\top \bm V_I \bm w_I\\
   <{}& \lambda f^{(I)}(\bm v_I) + (1-\lambda)  f^{(I)}(\bm w_I) ,
   \end{align*}
  which is a contradiction to the choice of  $\bm v_I$ and $\bm w_I$ as minimizers of $f^{(I)}$.
\end{proof}

\begin{example} \label{ex:minvar-known-margins}
 In the case of a pairwise extremal coefficient $\tau_I$ with $I =\{i,j\}$,
 we obtain an explicit expression for the minimizer $\bm v_{\{i,j\}}^*$ 
 with $v_i^*, v_j^* \geq 0$ and $v_i^* + v_j^* = 1$ of the function $f^{(I)}(\bm v_{\{i,j\}})  = \EE[(v_i \Theta_i^I + v_j \Theta_j^I)^2] = (v_i, v_j) \bm V_I (v_i, v_j)^\top$.
 To this end, we note that the function
 $$ v_i \mapsto  (v_i, 1-v_i) \bm V_I
          \left(\begin{array}{c} v_i\\ 1-v_i \end{array} \right)
      = v_i^2 \EE[(\Theta_i^I)^2] + 2 v_i (1-v_i) \EE[\Theta_i^I \Theta_j^I] + (1 - v_i)^2 \EE[(\Theta_j^I)^2] $$
 is a quadratic function which possesses a unique minimizer $v_i^* \in [0,1]$ if 
 and only if $\EE[(\Theta_i^I)^2] - 2 \EE[\Theta_i^I \Theta_j^I] + \EE[(\Theta_j^I)^2] > 0$, i.e.\ if and only if
 $\bm V_I$ is conditionally positive definite, cf.\ Prop.~\ref{prop:uniqueness}.
 From 
\begin{align*}
\EE \left[ \Theta_i^I \cdot \Theta_j^I \right] ={}& \frac{1}{\EE[\ell_I(\bm \Theta)]} \cdot \EE\left[ \frac{\Theta_i \Theta_j}{\Theta_i \vee \Theta_j} \right] 
{}={}  \frac{1}{\EE[\ell_{I}(\bm \Theta)]} \cdot \EE\left[ \Theta_i \wedge \Theta_j \right] \\
{}={}& \frac{1}{\EE[\ell_I(\bm \Theta)]} \cdot \EE\left[ \Theta_i + \Theta_j  - \Theta_i \vee \Theta_j \right] 
{}={} \frac{\tau }{\tau_{I}} \cdot
\left( \frac 2 {\tau } 
- \frac{\tau_{I}}{\tau } \right)
{}={} \frac 2 {\tau_{I}} - 1, 
\end{align*}
we obtain that
\begin{equation} \label{eq:assess-var-covar1}
 \EE[(\Theta_i^I)^2] \geq [\EE(\Theta_i^I)]^2 = \frac 1 {\tau_I^2} \geq \frac{2}{\tau_I} - 1 = \EE \left[ \Theta_i^I \cdot \Theta_j^I \right]
\end{equation}
 and, analogously,
 \begin{equation} \label{eq:assess-var-covar2}
 \EE[(\Theta_j^I)^2] \geq \EE \left[ \Theta_i^I \cdot \Theta_j^I \right] \,.
 \end{equation}
 Note that equality in Eq.~\eqref{eq:assess-var-covar1} and in Equation
 \eqref{eq:assess-var-covar2} holds if and only if we have $\EE[(\Theta_i^I)^2]
 =\EE[\Theta_i^I] = 1$ and $\EE[(\Theta_j^I)^2]
 =\EE[\Theta_j^I] = 1$, respectively. Thus, for $\tau_I > 1$,  
 there exists a unique minimizer which can be easily computed:
 \begin{align*}
   v^*_i ={}& \frac{\EE[(\Theta_j^I)^2] - \EE[\Theta_i^I \Theta_j^I]}
       {\EE[(\Theta_i^I)^2] - 2\EE[\Theta_i^I \Theta_j^I]
       	+ \EE[(\Theta_j^I)^2]}
       = \frac{\EE[(\Theta_j^I)^2] - \EE[\Theta_i^I \Theta_j^I]}
              {\EE[(\Theta_i^I - \Theta_j^I)^2]} \\
\text{and} \quad 
v_j^* ={}& 1 - v_i^* 
= \frac{\EE[(\Theta_i^I)^2] - \EE[\Theta_i^I \Theta_j^I]}
       {\EE[(\Theta_i^I - \Theta_j^I)^2]}.   
\end{align*}
By Eq.~\eqref{eq:assess-var-covar1} and
\eqref{eq:assess-var-covar2}, we obtain $v_i^*, v_j^* > 0$,
i.e.\ $\bm v_{\{i,j\}}^* \in \partial B_1^+(\bm 0)$.
Some further calculations lead to the minimal asymptotic variance $ \tau_I^{-3} (\tau_I^2 \cdot f^{(I)}(\bm v_I)-1)$ with
$$ f^{(I)}(\bm v^*_I) 
 = \frac{\EE[(\Theta_i^I)^2] \cdot \EE[(\Theta_j^I)^2] 
 	      - \left( \EE [\Theta_i^I \Theta_j^I] \right)^2 }
 	    {\EE[(\Theta_i^I - \Theta_j^I)^2]}. $$
\end{example}

\subsubsection{Asymptotic normality of the plug-in estimator} \label{sec:asymptotic}

We use the functional central limit theorems Cor.~\ref{coro:fclt-ratio} and Thm.~\ref{thm:fclt-ratio-rank} to show that estimators with minimal asymptotic variance can be obtained
by plugging-in consistent estimators of the optimal weight
vectors.

The variance minimizing vector $\bm v^*_I$ can be estimated by taking the minimum of the empirical counterpart of $f^{(I)}$:
$$ \widehat{\bm v}_{n,u,I}^* = \argmin_{\bm v_I \in \partial B_1^+(\bm 0)} \frac{\widehat{M}_{n,u,I}(\bm v_I,2)}{\widehat{P}_{n,u,I}}.$$
It is important to notice that, being constructed as the empirical second moment of a convex combination,
$\bm v_i \mapsto \widehat{M}_{n,u,I}(\bm v_I,2)/\widehat{P}_{n,u,I}$ is a positive semi-definite quadratic
form. Thus, the evaluation of $\widehat{M}_{n,u,I}(\bm v_I,2)/\widehat{P}_{n,u,I}$ for a finite number of vectors $\bm v_I$ is sufficient to reconstruct the
whole function $\{\widehat{M}_{n,u,I}(\bm v_I,2)/\widehat{P}_{n,u,I}: \, \bm v_I \in \partial B_1^+(\bm 0)\}$ and, thus, to calculate its minimizer. 

Even though $f^{(I)}$ is conditionally positive definite, i.e.\ the theoretical minimizer $\bm v^*_I$
is unique,  definiteness of the
empirical counterpart $\bm v_I \mapsto \widehat{M}_{n,u,I}(\bm v_I,2)/\widehat{P}_{n,u,I}$ is not
guaranteed for a finite sample, i.e.\ its minimizer $\widehat{\bm v}_{n,u,I}^*$ might no longer be unique.
In such a case, we just choose an arbitrary minimizer. Moreover, if $\ell_I(\bm X^*_{l}) \leq u$ then $\widehat{\bm v}_{n,u,I}^*$ is not well defined and can be an
arbitrary vector of $\partial B_1^+(\bm 0)$  for all $l \in \{1,\ldots,n\}$. 
We first show consistency of this estimator.

\begin{proposition} \label{prop:consist-opt-var}
Let $\bm X^*_l=(X^*_{l1},\ldots, X^*_{ld})^\top$, $l \in \NN$, be independent copies of a $d$-dimensional random  vector $\bm X^*$ satisfying the assumptions of Prop.~\ref{prop:uniqueness} for all non-empty set $I \subset \{1,\ldots,d\}$. Furthermore, let $\{u_n\}_{n \in \NN}$ be a sequence such that $u_n \to \infty$ and $n / a^*(u_n) \to \infty$ as $n \to \infty$ and assume that Eq.~\eqref{eq:bias-negligible} holds for $p=0,2$ and $\delta=0$.
  Then, $$\widehat{\bm v}_{n,u_n,I}^* \stackrel{p}{\longrightarrow} \bm v^*_I$$ for any sequence of minimizers $\widehat{\bm v}_{n,u_n,I}^*$ and any $I \subset \{1,\ldots,d\}$.\\
  Furthermore, 
  $\widehat{M}_{n,u_n,I}(\widehat{\bm v}_{n,u_n,I}^*,2) \to_p f^{(I)}(\bm v^*_I)$.
\end{proposition}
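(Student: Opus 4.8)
The plan is to reduce both assertions to a single uniform law of large numbers for the empirical objective $\bm v_I\mapsto\widehat M_{n,u_n,I}(\bm v_I,2)/\widehat P_{n,u_n,I}$ on the compact simplex $\partial B_1^+(\bm 0)$, after which everything follows by the standard argmin and continuous-mapping machinery. \textbf{Step 1 (uniform consistency).} I would apply Corollary~\ref{coro:fclt-ratio} with $\delta=0$ and $K=\{2\}$: the standing assumptions place us in its setting and \eqref{eq:bias-negligible} is assumed for $p\in\{0,2\}=K\cup\{0\}$. Specialising the index there to $\bm s=\bm 1_I$, $\beta=1$ and $p=2$, the corollary gives weak convergence of $\{\widetilde G_n(\bm v_I,\bm 1_I,1,2):\bm v_I\in\partial B_1^+(\bm 0)\}$ in $\ell^\infty(\partial B_1^+(\bm 0))$ to a tight Gaussian process, whence $\sup_{\bm v_I}|\widetilde G_n(\bm v_I,\bm 1_I,1,2)|=O_p(1)$. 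As $\widetilde G_n=\sqrt{n/a^*(u_n)}\,\bigl(\widehat M_{n,u_n,I}(\cdot,2)/\widehat P_{n,u_n,I}-c(\cdot,\bm 1_I,1,2)\bigr)$ with $\sqrt{n/a^*(u_n)}\to\infty$, and since the change of measure \eqref{eq:spectral-trafo} identifies $c(\bm v_I,\bm 1_I,1,2)=\EE[(\bm v_I^\top\bm\Theta_I^I)^2]=f^{(I)}(\bm v_I)$, this yields $\Delta_n:=\sup_{\bm v_I\in\partial B_1^+(\bm 0)}\bigl|\widehat M_{n,u_n,I}(\bm v_I,2)/\widehat P_{n,u_n,I}-f^{(I)}(\bm v_I)\bigr|\to_p 0$. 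I would also note the harmless bookkeeping point that $\widehat P_{n,u_n,I}>0$ with probability tending to one, since $n\,\PP(\ell_I(\bm X^*)>u_n)\sim n\tau_I/a^*(u_n)\to\infty$ makes the probability $(1-\PP(\ell_I(\bm X^*)>u_n))^n$ of no exceedance vanish; on the complement of this negligible event, where $\widehat{\bm v}^*_{n,u_n,I}$ and the objective are fixed arbitrarily, $\Delta_n$ is well defined and all statements below are about convergence in probability.

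\textbf{Step 2 (consistency of the minimizer).} By Proposition~\ref{prop:uniqueness}, conditional positive definiteness of $\bm V_I$ makes $f^{(I)}$ a continuous function on the compact set $\partial B_1^+(\bm 0)$ with a \emph{unique} minimizer $\bm v^*_I$. Fix $\epsilon>0$ and set $\eta(\epsilon):=\inf\{f^{(I)}(\bm v_I)-f^{(I)}(\bm v^*_I):\bm v_I\in\partial B_1^+(\bm 0),\ \|\bm v_I-\bm v^*_I\|\ge\epsilon\}$, which is strictly positive by compactness and uniqueness. On $\{\widehat P_{n,u_n,I}>0,\ \|\widehat{\bm v}^*_{n,u_n,I}-\bm v^*_I\|\ge\epsilon\}$, the minimizing property $\widehat M_{n,u_n,I}(\widehat{\bm v}^*_{n,u_n,I},2)/\widehat P_{n,u_n,I}\le\widehat M_{n,u_n,I}(\bm v^*_I,2)/\widehat P_{n,u_n,I}$ together with the definition of $\Delta_n$ gives $\eta(\epsilon)\le f^{(I)}(\widehat{\bm v}^*_{n,u_n,I})-f^{(I)}(\bm v^*_I)\le 2\Delta_n$. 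Hence $\PP(\|\widehat{\bm v}^*_{n,u_n,I}-\bm v^*_I\|\ge\epsilon)\le\PP(2\Delta_n\ge\eta(\epsilon))+\PP(\widehat P_{n,u_n,I}=0)\to 0$, i.e.\ $\widehat{\bm v}^*_{n,u_n,I}\to_p\bm v^*_I$ for any sequence of minimizers and any non-empty $I\subset\{1,\ldots,d\}$.

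\textbf{Step 3 (convergence of the minimized value).} The second assertion follows from the decomposition
$$\frac{\widehat M_{n,u_n,I}(\widehat{\bm v}^*_{n,u_n,I},2)}{\widehat P_{n,u_n,I}}-f^{(I)}(\bm v^*_I)=\Bigl(\frac{\widehat M_{n,u_n,I}(\widehat{\bm v}^*_{n,u_n,I},2)}{\widehat P_{n,u_n,I}}-f^{(I)}(\widehat{\bm v}^*_{n,u_n,I})\Bigr)+\bigl(f^{(I)}(\widehat{\bm v}^*_{n,u_n,I})-f^{(I)}(\bm v^*_I)\bigr),$$
the first summand being bounded in modulus by $\Delta_n\to_p 0$ and the second tending to $0$ in probability by continuity of $f^{(I)}$ and Step~2 (continuous mapping).

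The main obstacle is Step~1: upgrading the pointwise consistency already noted after Corollary~\ref{coro:fclt-ratio} to a uniform-in-$\bm v_I$ statement with the \emph{correct} limit $f^{(I)}$, which rests on invoking the functional central limit theorem with the right parameter choice, on identifying $c(\bm v_I,\bm 1_I,1,2)$ with $f^{(I)}(\bm v_I)$ through the spectral change of measure, and on the minor but necessary handling of the degenerate no-exceedance event. Once $\Delta_n\to_p 0$ is in hand, Steps~2 and~3 are the classical M-estimation consistency and continuous-mapping arguments and introduce no further difficulty.
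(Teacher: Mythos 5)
Your proposal is correct and follows essentially the same route as the paper: uniform consistency of $\bm v_I\mapsto\widehat M_{n,u_n,I}(\bm v_I,2)/\widehat P_{n,u_n,I}$ via Corollary~\ref{coro:fclt-ratio} (using $c(\bm v_I,\bm 1_I,1,2)=f^{(I)}(\bm v_I)$ and dividing the tight process by the diverging rate), then the standard argmin-consistency argument exploiting the unique minimizer from Proposition~\ref{prop:uniqueness}, with the same treatment of the no-exceedance event and the same two-term decomposition for the minimized value. Your write-up is, if anything, slightly more explicit than the paper's on the $O_p(1)$ step and on keeping the denominator $\widehat P_{n,u_n,I}$ in view throughout.
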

\begin{proof}
	
  We observe that,  non-empty $I \subset \{1,\ldots,d\}$, we have
  $f^{(I)}(\bm v_I) = c(\bm v_I, \bm 1_I, 2)$ for all $\bm v_I \in \partial B_1^+(\bm 0)$ and, thus, Cor.~\ref{coro:fclt-ratio} yields  
  \begin{equation} \label{eq:unif-conv}
  \sup\nolimits_{\bm v_I \in \partial B_1^+(\bm 0)} |\widehat{M}_{n,u_n,I}(\bm v_I,2) - f^{(I)}(\bm v_I)| \stackrel{p}{\longrightarrow} 0.
  \end{equation}
  As the continuous function $f^{(I)}$ attains its unique minimum on
  $\partial B_1^+(\bm 0)$ at $\bm v^*_I$ (see Prop.~\ref{prop:uniqueness}), for every $\varepsilon > 0$, there
  exists some $\eta = \eta(\varepsilon) > 0$ such that
  $ f^{(I)}(\bm v) - f^{(I)}(\bm v^*_I) > \eta $ for all $\bm v$ with 
  $\|\bm v - \bm v^*\| > \varepsilon$. Consequently, if $\|\widehat{\bm v}_{n,u_n,I}^* - \bm v^*\| > \varepsilon$ then we have that $ f^{(I)}(\widehat{\bm v}_{n,u_n,I}^*) - f^{(I)}(\bm v^*_I) > \eta $, which further implies 
  $$
  f^{(I)}(\widehat{\bm v}_{n,u_n,I}^*) - \widehat{M}_{n,u_n,I}(\widehat{\bm v}_{n,u_n,I}^*,2) +\widehat{M}_{n,u_n,I}(\bm v^*_I,2)- f^{(I)}(\bm v^*_I) > \eta 
  $$
  as $\widehat M_{n,u_n,I}(\widehat{\bm v}_{n,u_n,I}^*,2) \leq \widehat M_{n,u_n,I}(\bm v^*_I,2)$ by definition of $\widehat{\bm v}_{n,u_n,I}^*$. In particular, $$\sup_{\bm v \in \partial  B_1^+(\bm 0)} |\widehat M_{n,u_n,I}(\bm v,2) - f^{(I)}(\bm v)| > \eta/ 2.$$ Noticing that $\widehat{\bm v}_{n,u_n,I}^*$ is well defined only if $\bigvee\nolimits_{1\le l\le n } \ell_I(\bm X^*_l) > u_n$, we obtain
  $$ \PP(\|\widehat{\bm v}_{n,u_n,I}^* - \bm v_I^*\| > \varepsilon)
    \leq \PP\left( \ell_I(\bm X^*) \leq u_n\right)^n +     
    \PP\Big(\sup_{\bm v_I \in \partial B_1^+(\bm 0)} |\widehat M_{n,u_n}(\bm v_I,2) - f^{(I)}(\bm v_I)| > \frac{\eta} 2\Big),$$
  which tends to $0$ as $n \to \infty$ by Eq.~\eqref{eq:unif-conv}. 
  
  The statement that $$\widehat{M}_{n,u_n,I}(\widehat{\bm v}_{n,u_n,I}^*,2) \to_p f^{(I)}(\bm v^*_I)$$ follows from the facts that 
  $ f^{(I)}(\widehat{\bm v}_{n,u_n,I}^*) - \widehat M_{n,u_n,I}(\widehat{\bm v}_{n,u_n,I}^*,2) \to_p 0$ by Eq.~\eqref{eq:unif-conv} and 
  $f^{(I)}(\widehat{\bm v}_{n,u_n,I}^*) - f^{(I)}(\bm v_I^*) \to_p 0$
  due to $\widehat{\bm v}_{n,u_n,I}^* \to_p \bm v^*_I$ and the continuity of $f^{(I)}$.

  The second part can be shown analogously using the results from Thm.~\ref{thm:fclt-ratio-rank} instead of Cor.~\ref{coro:plugin-clt}.
\end{proof}

Now, we are ready establish the desired limit theorem for the plug-in
estimator, the moment-based estimator for the known margin case,
$$ \frac{1}{\widehat \tau_{I,n}^{MK}} = \frac{\widehat M_{n,u_n,I}(\widehat{\bm v}^*_{n,u_n,I})}{\widehat{P}_{n,u_n,I}}, $$
 by combining the results of Prop.~\ref{prop:consist-opt-var} with Cor.~\ref{coro:fclt-ratio}.

\begin{corollary} \label{coro:plugin-clt}
Let $\bm X^*_l=(X^*_{l1},\ldots, X^*_{ld})^\top$, $l \in \NN$, be independent copies of a $d$-dimensional random  vector $\bm X^*$ satisfying the assumptions of Prop.~\ref{prop:uniqueness} for all non-empty $I \subset \{1,\ldots,d\}$.   
  Furthermore, let $\{u_n\}_{n \in \NN}$ be a sequence such that $u_n \to \infty$ and $n / a^*(u_n) \to \infty$ as $n \to \infty$ and assume that Eq.~\eqref{eq:bias-negligible} holds for $p=0,1,2$ and $\delta=0$.	
  Then, for all non-empty $I \subset \{1,\ldots,d\}$,
	$$\sqrt{\frac{n}{a^*(u_n)}} 
  \left( \frac{1}{\widehat \tau_{I,n}^{MK}} - \frac{1}{\tau_I}  \right)
  \stackrel{d}{\longrightarrow} 
  \mathcal{N}\left(0, \tau_I^{-1} \Var({\bm v^*_I}^\top \bm \Theta^I)\right).$$
\end{corollary}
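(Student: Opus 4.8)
The plan is to deduce the result from the functional central limit theorem of Corollary~\ref{coro:fclt-ratio} (specialised to $\bm s=\bm 1_I$, $\beta=1$ and the orders $p=0,1$) together with the consistency of $\widehat{\bm v}^*_{n,u_n,I}$ established in Proposition~\ref{prop:consist-opt-var}. The decisive structural fact is that the limiting first moment $c(\bm v_I,\bm 1_I,1,1)=\EE[\bm v_I^\top\bm\Theta^I]=1/\tau_I$ does \emph{not} depend on $\bm v_I$ on $\partial B_1^+(\bm 0)$ (this is Equation~\eqref{eq:mean-spectral-ell} together with the linearity of $m_1$). Hence replacing the deterministic optimal weight $\bm v^*_I$ by a consistent estimator $\widehat{\bm v}^*_{n,u_n,I}$ introduces no first-order term, and the estimation of the weights is ``for free'' at the rate $\sqrt{n/a^*(u_n)}$.

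Concretely, writing $\widetilde G_n$ for the process of Corollary~\ref{coro:fclt-ratio} and using $c(\bm v_I,\bm 1_I,1,1)\equiv 1/\tau_I$, I would first observe that
\[
\sqrt{\frac{n}{a^*(u_n)}}\left(\frac{1}{\widehat\tau_{I,n}^{MK}}-\frac{1}{\tau_I}\right)
=\sqrt{\frac{n}{a^*(u_n)}}\left(\frac{\widehat M_{n,u_n,I}(\widehat{\bm v}^*_{n,u_n,I},1)}{\widehat P_{n,u_n,I}}-c\bigl(\widehat{\bm v}^*_{n,u_n,I},\bm 1_I,1,1\bigr)\right)
=\widetilde G_n\bigl(\widehat{\bm v}^*_{n,u_n,I},\bm 1_I,1,1\bigr).
\]
Since the hypotheses include Equation~\eqref{eq:bias-negligible} for $p=0,1$ and $\delta=0$, Corollary~\ref{coro:fclt-ratio} then gives that $\bm v_I\mapsto\widetilde G_n(\bm v_I,\bm 1_I,1,1)$ converges weakly in $\ell^\infty(\partial B_1^+(\bm 0))$ to a tight centered Gaussian process $\bm v_I\mapsto\widetilde G(\bm v_I,\bm 1_I,1,1)$; moreover, by the third part of Remark~\ref{rem:joint-conv-cont}, $\widetilde G_n(\cdot,\bm 1_I,1,1)$ is asymptotically uniformly equicontinuous on $\partial B_1^+(\bm 0)$ and $\widetilde G(\cdot,\bm 1_I,1,1)$ has almost surely uniformly continuous sample paths. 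Combining this with $\widehat{\bm v}^*_{n,u_n,I}\to_p\bm v^*_I$ (Proposition~\ref{prop:consist-opt-var}) via the standard device for evaluating a uniformly convergent process at a consistent random argument — asymptotic equicontinuity yields $\widetilde G_n(\widehat{\bm v}^*_{n,u_n,I},\bm 1_I,1,1)-\widetilde G_n(\bm v^*_I,\bm 1_I,1,1)\to_p 0$, while $\widetilde G_n(\bm v^*_I,\bm 1_I,1,1)\to_d\widetilde G(\bm v^*_I,\bm 1_I,1,1)$ — I conclude that $\widetilde G_n(\widehat{\bm v}^*_{n,u_n,I},\bm 1_I,1,1)\to_d\widetilde G(\bm v^*_I,\bm 1_I,1,1)$, a centered normal variable. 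Finally, the first part of Remark~\ref{rem:joint-conv-cont} (with $p_1=p_2=1$ and $\bm v=\bm w=\bm v^*_I$) identifies its variance as $\tau_I^{-1}\Var\bigl({\bm v^*_I}^\top\bm\Theta^I\bigr)$, which is exactly the asserted limiting distribution.

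The main obstacle is making this last substitution fully rigorous, i.e.\ replacing the random, consistent weight $\widehat{\bm v}^*_{n,u_n,I}$ by $\bm v^*_I$ inside the process $\widetilde G_n$. This rests on (a) the asymptotic uniform equicontinuity of $\widetilde G_n(\cdot,\bm 1_I,1,1)$, which is part of the conclusion of Corollary~\ref{coro:fclt-ratio} and is recorded in Remark~\ref{rem:joint-conv-cont}, and (b) the almost-sure uniform continuity of the Gaussian limit on the compact simplex $\partial B_1^+(\bm 0)$; together these permit a Slutsky/continuous-mapping argument applied to the pair $\bigl(\widetilde G_n(\cdot,\bm 1_I,1,1),\widehat{\bm v}^*_{n,u_n,I}\bigr)$, and the remaining computations are routine bookkeeping. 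It is worth stressing that the clean conclusion hinges on $m_1=c(\cdot,\bm 1_I,1,1)$ being constant: for a non-flat target functional one would have to carry along a delta-method term involving its gradient at $\bm v^*_I$ and the fluctuations of $\widehat{\bm v}^*_{n,u_n,I}$, and one would then also need a rate of convergence for $\widehat{\bm v}^*_{n,u_n,I}-\bm v^*_I$, which the present argument does not require.
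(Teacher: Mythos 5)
Your proposal is correct and follows exactly the route the paper intends (the paper gives no separate proof, stating only that the corollary follows "by combining the results of Proposition \ref{prop:consist-opt-var} with Corollary \ref{coro:fclt-ratio}"): the constancy of $c(\cdot,\bm 1_I,1,1)\equiv 1/\tau_I$ on the simplex lets you write the rescaled error as $\widetilde G_n$ evaluated at the random weight, and the asymptotic equicontinuity from Remark \ref{rem:joint-conv-cont} together with $\widehat{\bm v}^*_{n,u_n,I}\to_p\bm v^*_I$ is the same device the paper itself uses in the proofs of Theorems \ref{thm:an-hill} and \ref{thm:fclt-ratio-rank} to substitute a consistent random argument into a weakly convergent process. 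The variance identification via the first part of Remark \ref{rem:joint-conv-cont} is also the intended one, so the argument is complete.
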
  
\medskip
 
 \subsection{Unknown margins} \label{sec:random}
 
 Similarly, in case that the marginal distributions of $X_1,\ldots,X_d$ are unknown, i.e.\ Eq.~\eqref{eq:a-normal} is no longer satisfied and the indices $\alpha_1.\ldots,\alpha_d$ of regular variation may be different from $1$, we need to replace the deterministic threshold both in the benchmark estimator and in the moment-based estimator by a random one and include Hill type estimators as described in Subsection \ref{subsec:random-thresh}.
 
\subsubsection{The benchmark and the moment-based estimator}\label{sec:unknbench}

In case of the benchmark defined in Eq.~\eqref{eq:def-benchmark-estimator}, the transformations above lead to the estimator
$$ \frac{\widetilde{P}^{(conv)}_{n,k,I}(\bm v_I)}{\widetilde{P}_{n,k,I}}
  = \frac{n^{-1} \sum_{l=1}^n \mathds 1\{ \bm v_I^\top (\bm X_l/ \bm X_{k:n})^{\widehat \alpha} > 1\}}{n^{-1} \sum_{l=1}^n \mathds 1\{ \ell_I(\bm X_l / \bm X_{k:n}) > 1\}},
  \qquad \bm v_I \in \partial B_1^+(\bm 0). $$
Of particular interest is the case that $\bm v_I$ is a standard basis vector, i.e.\ $\bm v_I = \bm 1_{\{i\}}$ for some $i \in I$, since then the numerator satisfies $\widetilde{P}^{(conv)}_{n,k,I}(\bm 1_{\{i\}}) = k/n$ a.s., i.e.\ it possesses variance zero. Thus, the estimator simplifies to
$$ \frac{1}{\frac{n}{k} \widetilde{P}_{n,k,I}} = \frac 1 {\widehat L_{k,n}(\bm 1_I)} $$
where $\widehat L_{k_n,n}$ is the well-known non-parametric estimator for the stable-tail dependence function \citep[cf.][]{drees1998best}.  Provided that the bias is negligible and that the function 
$\bm s \mapsto \EE[\|\bm s \circ \bm \Theta^I\|]$ is differentiable at $\bm s = \bm 1_I$,
this estimator is asymptotically normal:
$$ \sqrt{k_n}\left( \frac 1 {\widehat L_{k_n,n}(\bm 1_I)} - \frac 1 {\tau_I}\right)
   \stackrel{d}{\longrightarrow} \mathcal{N}\left(0, \frac{\sigma_L^2}{\tau_I^4}\right) $$
where the asymptotic variance of the stable dependence function estimator $\widehat L_{k_n,n}(\bm 1_I)$ is provided in Chapter 7.4 of \cite{dehaan-ferreira-2006} by the expression
$$ \sigma_L^2 = \tau_I^3 \sum_{i \in I} \sum_{j \in I} \frac{\partial}{\partial s_i} \EE[\|\bm s \circ \bm \Theta^I\|] \Big|_{\bm s = \bm 1_I} \frac{\partial}{\partial s_j} \EE[\|\bm s \circ \bm \Theta^I\|] \Big|_{\bm s = \bm 1_I} \EE[\Theta^I_i \wedge \Theta^I_j] - \tau_I.$$
Using that $\EE[\Theta^I_i \wedge \Theta^I_j] \leq \EE(\Theta_i^*) = 1/\tau_I$ for all $i \in I$ and $\sum_{i \in I} \frac{\partial}{\partial s_i} \EE[\|\bm s \circ \bm \Theta^I\|] \big|_{\bm s = \bm 1_I} = 1$ by Euler's homogeneous function theorem due to the positive $1$-homogeneity of the function $\bm s \mapsto \EE[\|\bm s \circ \bm \Theta^I\|]$, we obtain that
$$ \frac{\sigma_L^2 }{\tau_I^4} \leq  \frac{\tau_I^2  - \tau_I}{\tau_I^4}
   = \frac{\tau_I - 1}{\tau_I^3}, $$
i.e.~the asymptotic variance of $1 / \widehat L_{k_n,n}(\bm 1_I)$ is always smaller than the asymptotic variance of the benchmark estimators for known margins, $ 1 / \widehat{\tau}_{I,n}(\bm v_I)$. In particular, the estimator becomes more efficient by using order statistics even if the marginal distributions are known.
\medskip
 
For the moment-based estimators, as already discussed in Subsection \ref{subsec:random-thresh},
the marginal transformations yield estimators of the type
$\widetilde{M}_{n,k,I}(\bm v_I,1) / \widetilde{P}_{n,k,I}$, $\bm v_I \in \partial B_1^+(\bm 0)$. Provided that the assumptions of Thm.~\ref{thm:fclt-ratio-rank}
hold, we obtain the asymptotic normality
 $$ \sqrt{k_n} \left( \frac{\widetilde{M}_{n,k_n,I}(\bm v_I,1)}
{\widetilde{P}_{n,k_n,I}} - \frac 1  {\tau_I} \right)
\stackrel{d}{\longrightarrow} 
\mathcal{N}(0, \widetilde V(\bm v_I)), \qquad n\to \infty\,,$$
with $\widetilde V(\bm v_I, 1)$ is the variance of the limiting process given in Thm.~\ref{thm:fclt-ratio-rank}. 

\subsubsection{Variance minimization}

Analogously to Section \ref{sec:deterministic}, we aim at choosing the estimator that provides the minimal asymptotic variance, i.e.\ use the weight vector $\widetilde{\bm v}_I \in \partial B_1^+(\bm 0)$ that minimizes $\widetilde V(\cdot)$, respectively. To this end, we first exploit the results given in Rem.~\ref{rem:asymp-variance-orderstats}; By linearity of this limiting process $\widetilde V(\bm v_I)$ is  a quadratic form $\bm v_I^T \widetilde \Sigma_I \bm v_I$ for the symmetric positive semidefinite covariance matrix
$$
\widetilde {\bm V}_I = \Var\Big(\Big( \widetilde G(\bm 1_{j},\bm 1_I,\bm 1,1) -  \sum_{i \in I}[ c_{s_i}(\bm 1_{j}, \bm 1_I, \bm 1, 1)  G^0(\bm 1_{\{i\}}) + c_{\beta_i}(\bm 1_{j}, \bm 1_I, \bm 1, 1) \bm \alpha_i\widetilde H_i]\Big)_{j\in I}\Big)\,.
$$
We obtain an analogous result to Prop.~\ref{prop:uniqueness} and the analogous proof is omitted.

\begin{proposition} \label{prop:uniqueness-random}
	Let $\bm \Theta$ be a spectral vector. Assume that all the partial derivatives
	\begin{align*}
	c_{s_i}(\bm v, \bm s, \beta, 1) ={}& \frac{\partial}{\partial s_i}c(\bm v, \bm s, \beta, 1),
	\qquad i \in \{1,\ldots,d\},\\
	c_{\beta_i}(\bm v, \bm s, \beta, 1) ={}& \frac{\partial}{\partial \beta}c(\bm v, \bm s, \beta, 1),
	\end{align*}
	exist and are continuous on $\partial B_1^+(\bm 0) \times A_{\delta}'$ and that
	all the partial derivatives of the function
	$ \bm s \mapsto \EE(\| \bm s \circ \bm \Theta^I\|) $
	exist and are continuous in $A_{\delta}$.\\
	Then, for all non-empty $I \subset \{1,\ldots,d\}$:
	If the matrix $\widetilde{\bm V}_I$
	is conditionally  positive definite, then the function $\bm v_I \mapsto \widetilde V(\bm v_I)$
	attains its minimum on $\partial B_1^+(\bm 0)$ at some unique  $\widetilde{\bm v}_I \in \partial B_1^+(\bm 0)$.             
\end{proposition}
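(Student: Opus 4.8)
The plan is to follow the proof of Proposition~\ref{prop:uniqueness} closely, the only genuinely new ingredient being that, once the explicit forms of $c_i$ and $c_\beta$ are inserted, the map $\bm v_I\mapsto\widetilde V(\bm v_I,1)$ turns out to be a quadratic function of $\bm v_I$. I would start by noting that a minimizer exists at all: the set $\partial B_1^+(\bm 0)$, restricted to the coordinates indexed by $I$, is compact, and under the stated differentiability hypotheses the function $\bm v_I\mapsto\widetilde V(\bm v_I,1)$ from Remark~\ref{rem:asymp-variance-orderstats} is continuous; hence the infimum is attained. The whole content of the proposition is therefore uniqueness of the minimizer.

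Next I would make the quadratic structure explicit. By Equation~\eqref{eq:euler}, $\bm s\mapsto c(\bm v_I,\bm s,1,1)=\tau_I^{-1}\bm v^\top\bm s/\EE[\|\bm s\circ\bm\Theta^I\|]$ has a numerator that is bilinear in $(\bm v,\bm s)$; differentiating in $s_i$ and evaluating at $\bm s=\bm 1_I$, where $\|\bm 1_I\circ\bm\Theta^I\|=\ell_I(\bm\Theta^I)=1$ a.s.\ and $\sum_{i\in I}v_i=1$, gives
$$ c_i(\bm v_I,\bm 1_I,1,1) = \frac 1 {\tau_I}\Big( v_i - \frac{\partial}{\partial s_i}\EE[\|\bm s\circ\bm\Theta^I\|]\Big|_{\bm s=\bm 1_I}\Big),\qquad i\in I, $$
which is \emph{affine} in $\bm v_I$ with linear part $v_i\mapsto\tau_I^{-1}v_i$. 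Moreover $c(\bm v,\bm s,\beta,1)=\sum_i v_i\,\phi_i(\bm s,\beta)$ is linear in $\bm v$ for each fixed $(\bm s,\beta)$, with $\phi_i(\bm s,\beta)=\EE[(s_i\Theta_i/\|\bm s\circ\bm\Theta\|)^{1/\beta}\|\bm s\circ\bm\Theta\|]/\EE[\|\bm s\circ\bm\Theta\|]$, so $c_\beta(\bm v_I,\bm 1_I,1,1)=\sum_{i\in I}v_i\,\partial_\beta\phi_i(\bm 1_I,1)$ is linear in $\bm v_I$ as well. Substituting these into the decomposition~\eqref{eq:decomposition-tilde-v}, the argument of the variance in the first summand reduces to $\sum_{i\in I}\partial_{s_i}\EE[\|\bm s\circ\bm\Theta^I\|]\big|_{\bm s=\bm 1_I}\,\Theta_i^I$, which does not involve $\bm v_I$; hence
$$ \widetilde V(\bm v_I,1) = \text{const} + \tau_I\,\bm c(\bm v_I)^\top\widetilde{\bm V}_I\,\bm c(\bm v_I), \qquad \bm c(\bm v_I) := \Big( (c_i(\bm v_I,\bm 1_I,1,1))_{i\in I},\ \tau_I^{-1}c_\beta(\bm v_I,\bm 1_I,1,1)\Big)^\top, $$
where $\bm c(\bm v_I)=\bm A_0\bm v_I+\bm b$ is affine with linear part $\bm A_0\in\RR^{(|I|+1)\times|I|}$ whose first $|I|$ rows equal $\tau_I^{-1}\bm I_{|I|}$; in particular $\bm A_0$ has full column rank. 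Thus $\bm v_I\mapsto\widetilde V(\bm v_I,1)$ is quadratic.

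With this in place I would conclude exactly as in the proof of Proposition~\ref{prop:uniqueness}. Suppose $\widetilde V(\cdot,1)$ attained its minimum over $\partial B_1^+(\bm 0)$ at two distinct points $\bm v_I$ and $\bm w_I$, and set $\bm d=\bm v_I-\bm w_I\neq\bm 0$, which satisfies $\sum_{i\in I}d_i=0$. For $\lambda\in(0,1)$ the convex combination $\lambda\bm v_I+(1-\lambda)\bm w_I$ again lies in $\partial B_1^+(\bm 0)$, and expanding the quadratic yields
$$ \widetilde V(\lambda\bm v_I+(1-\lambda)\bm w_I,1) = \lambda\widetilde V(\bm v_I,1)+(1-\lambda)\widetilde V(\bm w_I,1) - \lambda(1-\lambda)\,\tau_I\,(\bm A_0\bm d)^\top\widetilde{\bm V}_I(\bm A_0\bm d). $$
Since the first $|I|$ coordinates of $\bm A_0\bm d$ equal $\tau_I^{-1}\bm d\neq\bm 0$, we have $\bm A_0\bm d\neq\bm 0$, so strict positive definiteness of $\widetilde{\bm V}_I$ forces $(\bm A_0\bm d)^\top\widetilde{\bm V}_I(\bm A_0\bm d)>0$, whence $\widetilde V(\lambda\bm v_I+(1-\lambda)\bm w_I,1)<\lambda\widetilde V(\bm v_I,1)+(1-\lambda)\widetilde V(\bm w_I,1)$, contradicting that $\bm v_I$ and $\bm w_I$ are minimizers. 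This yields uniqueness, and together with the existence noted above proves the claim.

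The step I expect to be the main obstacle is not the convexity argument, which is routine once the quadratic form is available, but the verification that $c_i$ and $c_\beta$ (at $\bm s=\bm 1_I$, $\beta=1$) are affine/linear in $\bm v_I$ with the stated linear parts, together with the consequent collapse of the $\bm v_I$-dependence of the first summand of~\eqref{eq:decomposition-tilde-v}; all of this hinges on the explicit representation~\eqref{eq:euler}, the linearity of $c(\cdot,\bm s,\beta,1)$ in its first argument, and the identity $\|\bm 1_I\circ\bm\Theta^I\|=\ell_I(\bm\Theta^I)=1$ a.s. One should also record that the relevant partial derivatives are continuous, so that $\widetilde V(\cdot,1)$ is a genuine polynomial with continuous coefficients — which is precisely what the hypotheses on $c_i$, $c_\beta$ and $\bm s\mapsto\EE(\|\bm s\circ\bm\Theta^I\|)$ supply.
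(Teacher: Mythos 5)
Your proof is correct and follows essentially the same route as the paper's: both rest on the decomposition \eqref{eq:decomposition-tilde-v}, the linearity of $c_i$ and $c_\beta$ in $\bm v$, the explicit computation from \eqref{eq:euler} showing that the linear part of $c_i(\cdot,\bm 1_I,1,1)$ in a tangent direction $\bm d$ (with $\sum_{i\in I}d_i=0$) is $\tau_I^{-1}d_i$, and the strict positive definiteness of $\widetilde{\bm V}_I$ feeding into the midpoint-convexity argument of Proposition \ref{prop:uniqueness}. The only cosmetic difference is that you observe the variance term $g_1$ is exactly constant on the simplex, whereas the paper merely uses $g_1(\bm w)\geq 0$ on the tangent space; both suffice.
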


\subsubsection{Asymptotic normality of the plug-in estimator}

Note that, in contrast to $f^{(I)}$, there is no direct empirical counterpart of $\widetilde{V}(\cdot)$. To enable analytical minimization of that function, we consider an estimator that retains the quadratic form of $\widetilde{V}(\cdot )$ by  estimating consistently the symmetric conditionally definite matrix $\widetilde {\bm V}_I$.
We consider a quadratic form $\widetilde V_{n,I}(\bm v_I) = \bm v_I^\top \widetilde{\bm V}_{n,I} \bm v_I$ for
a sequence of random symmetric matrices $(\widetilde{\bm V}_{n,I} )_{n \in \NN}$. Then we define
\begin{align*}
    \widetilde{\bm v}_{n,I} = \argmin_{\bm v_I \in \partial B_1^+(\bm 0)} \widetilde{V}_{n,I}(\bm v_I).
\end{align*}
Here, without further conditions on the underlying estimators, no kind of (semi-)definiteness
is guaranteed for the matrices $\widetilde{\bm V}_{n,I}$. However, being a continuous function, $\bm v_i 
\mapsto \widetilde V_{n,I}(\bm v_I)$ attains its minimum on the compact set $\partial B_1^+(\bm 0)$ at
least once. Here, an arbitrary minimizer might be chosen for $ \widetilde{\bm v}_{n,I}$ provided that it 
is not unique.

Analogously to Prop.~\ref{prop:consist-opt-var}, consistency of the estimator can be shown.

\begin{proposition} \label{prop:consist-opt-var-random}
 Let $\bm \Theta$ be a spectral vector satisfying the assumptions of
  Prop.~\ref{prop:uniqueness-random}. 
 Then, for all non-empty $I \subset \{1,\ldots,d\}$, it holds:
  If $\widetilde V_{I,n} \to_p \widetilde V_I$,
  then we have that $\widetilde{\bm v}_{n,I} \to_p \widetilde{\bm v}_I$ for any sequence of minimizers $\widetilde{\bm v}_{n,I}$. Furthermore, $\widetilde V_{n,I}(\widetilde{\bm v}_{n,I}) \to_p
  \widetilde V(\widetilde{\bm v}_I)$.
\end{proposition}
\begin{proof}
   Recall that, by construction, $\widetilde V_{n,I}(\bm v_I) = \bm v_I^\top \widetilde{\bm V}_{n,I} \bm v_I$
while
   $\widetilde V(\bm v_I) = \bm v_I^\top \widetilde{\bm V}_I \bm v_I$ for a conditionally positive definite matrix $\widetilde{\bm V}_I$. The assumptions on the consistency of all the estimators involved in $\widetilde{\bm V}_{n,I}$ imply that $\|\widetilde{\bm V}_{n,I} - \widetilde{\bm V}_I\|_\infty \to_p 0$. Consequently,
   $$ \sup_{\bm v_I \in \partial B_1^+(\bm 0)} | \widetilde V_{n,I}(\bm v_I) - \widetilde{ V}(\bm v_I) |
      \leq d^2 \|\widetilde{\bm V}_{n,I} - \widetilde{\bm V}_I\|_\infty \to_p 0, $$
   which is the analogue to Eq.~\eqref{eq:unif-conv}. The remainder of the proof runs analogously to the proof of Prop.~\ref{prop:consist-opt-var}.
\end{proof}

Combining the results of Prop.~\ref{prop:consist-opt-var-random} and Thm.~\ref{thm:fclt-ratio-rank}, we obtain the desired limit theorem for the plug-in estimator,
the moment based estimator for the case of unknown margins
$$ \frac 1 {\widehat \tau_{I,n}^{MU}} = \frac{\widetilde{M}_{n,k,I}(\widetilde{\bm v}_{n,I},1)}{\widetilde{P}_{n,k,I}}. $$

\begin{corollary} \label{coro:plugin-clt-final}
Let $\bm X_l$, $l \in \NN$, be independent copies of a regularly varying $[0,\infty)^d$-valued random vector $\bm X$  satisfying the assumptions of Prop.~\ref{prop:consist-opt-var-random} for all non-empty $I \subset \{1,\ldots,d\}$ and the assumptions of Thm.~\ref{thm:fclt-ratio-rank} for $K = \{1\}$. Then, the plug-in estimator satisfies
  \begin{align*}
	 & \sqrt{k} \left( \frac 1 {\widehat \tau_{I,n}^{MU}}  - \frac{1}{\tau_I}\right) \stackrel{d}{\longrightarrow} \mathcal{N}(0, \widetilde{V}(\widetilde{\bm v}_I))\,,\qquad n\to\infty\,.
	\end{align*}
\end{corollary}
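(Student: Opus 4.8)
The plan is to exhibit $\sqrt{k}\,(1/\widehat\tau_{I,n}^{MU}-1/\tau_I)$ as the $\ell^\infty(\partial B_1^+(\bm 0))$-valued fluctuation process whose weak limit is supplied by Theorem~\ref{thm:fclt-ratio-rank} (taken with $p=1$), evaluated at the data-driven weight vector $\widetilde{\bm v}_{n,I}$, and then to pass to the limit by a converging-together argument. The structural fact that makes this clean is that, by \eqref{eq:euler}, the centering $c(\bm v_I,\bm 1_I,1,1)=1/\tau_I$ does not depend on $\bm v_I\in\partial B_1^+(\bm 0)$. Hence, setting $Z_{n,I}(\bm v_I):=\sqrt{k}\,\big(\widetilde M_{n,k,I}(\bm v_I,1)/\widetilde P_{n,k,I}-1/\tau_I\big)$, one has the identity $\sqrt{k}\,(1/\widehat\tau_{I,n}^{MU}-1/\tau_I)=Z_{n,I}(\widetilde{\bm v}_{n,I})$ on the event that $\widetilde{\bm v}_{n,I}$ is well defined, an event of probability tending to~$1$.

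First I would invoke Theorem~\ref{thm:fclt-ratio-rank} with $K=\{1\}$: under the present hypotheses, $(Z_{n,I})_{n\in\NN}$ converges weakly in $\ell^\infty(\partial B_1^+(\bm 0))$ to the tight centered Gaussian process
$$ Z_I(\bm v_I)=\widetilde G(\bm v_I,\bm 1_I,1,1)-\sum_{i\in I}c_i(\bm v_I,\bm 1_I,1,1)\,G^0(\bm 1_{\{i\}})+c_\beta(\bm v_I,\bm 1_I,1,1)\,\alpha\widetilde H_I. $$
Since $c_i(\cdot,\bm 1_I,1,1)$ and $c_\beta(\cdot,\bm 1_I,1,1)$ are continuous on $\partial B_1^+(\bm 0)$ by assumption, and $\widetilde G(\cdot,\bm 1_I,1,1)$ has, by part~(3) of Remark~\ref{rem:joint-conv-cont}, almost all sample paths uniformly continuous there, almost every path of $Z_I$ is continuous on the compact set $\partial B_1^+(\bm 0)$; moreover $\Var(Z_I(\bm v_I))=\widetilde V(\bm v_I,1)$ for every $\bm v_I$, by the computation carried out in Remark~\ref{rem:asymp-variance-orderstats}.

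Next I would invoke Proposition~\ref{prop:consist-opt-var-random}, whose assumptions are part of the present hypotheses, to obtain $\widetilde{\bm v}_{n,I}\to_p\widetilde{\bm v}_I$, the deterministic minimizer of $\widetilde V(\cdot,1)$. It then remains to combine this with the weak convergence of $Z_{n,I}$. Because the limit $\widetilde{\bm v}_I$ is deterministic, $(Z_{n,I},\widetilde{\bm v}_{n,I})$ converges weakly to $(Z_I,\widetilde{\bm v}_I)$ jointly in $\ell^\infty(\partial B_1^+(\bm 0))\times\partial B_1^+(\bm 0)$; applying the extended continuous mapping theorem to the evaluation map $(z,\bm v)\mapsto z(\bm v)$, which is continuous at every pair $(z,\widetilde{\bm v}_I)$ with $z$ continuous at $\widetilde{\bm v}_I$, yields $Z_{n,I}(\widetilde{\bm v}_{n,I})\to_d Z_I(\widetilde{\bm v}_I)\sim\mathcal N\big(0,\widetilde V(\widetilde{\bm v}_I)\big)$, which is the assertion.

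The step I expect to be the main obstacle is precisely this last passage: replacing the random weight $\widetilde{\bm v}_{n,I}$ by its deterministic limit inside the $\sqrt{k}$-scaled fluctuation, i.e.\ verifying $Z_{n,I}(\widetilde{\bm v}_{n,I})-Z_{n,I}(\widetilde{\bm v}_I)=o_p(1)$. This is a stochastic-equicontinuity statement; it follows from the asymptotic tightness of $(Z_{n,I})_{n\in\NN}$ established inside the proof of Theorem~\ref{thm:fclt-ratio-rank}, together with $\widetilde{\bm v}_{n,I}\to_p\widetilde{\bm v}_I$ and the continuity of the limiting sample paths, and the whole argument parallels the one used for Corollary~\ref{coro:plugin-clt} in the known-margins case. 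A minor technicality to record is that $\widetilde{\bm v}_{n,I}$ is only defined once at least one observation exceeds the random threshold, which occurs with probability tending to $1$ and thus does not affect the limit.
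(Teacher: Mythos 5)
Your argument is correct and follows exactly the route the paper intends (the paper offers no written proof beyond the remark that the result follows by combining Proposition \ref{prop:consist-opt-var-random} with Theorem \ref{thm:fclt-ratio-rank}): you use that $c(\bm v_I,\bm 1_I,1,1)=1/\tau_I$ is constant in $\bm v_I$ to write the plug-in statistic as the fluctuation process of Theorem \ref{thm:fclt-ratio-rank} evaluated at $\widetilde{\bm v}_{n,I}$, and then pass to the limit via the consistency of $\widetilde{\bm v}_{n,I}$, the a.s.\ continuity of the limit paths, and the extended continuous mapping theorem. This is precisely the argument the authors leave implicit, so nothing further is needed.
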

In order to get a consistent estimator $\widetilde{\bm V}_{n,I}$ of  $\widetilde{\bm V}_I$ we use the $m$-out-of-$n$ bootstrap procedure. From Thm.~\ref{thm:fclt-ratio-rank} it is very natural to consider the covariance matrix of the vector
$$
\left(\sqrt{k_n} \left( \frac{\widetilde{M}_{n,k_n,I}(\bm 1_{\{i\}},1)}{\widetilde{P}_{n,k_n,I}} - c(\bm 1_{\{i\}}, \bm 1_I, \bm1, 1)\right)\right)_{1\le i\le d}
$$
as an approximation of $\widetilde{\bm V}_I$. In order to estimate each entry of this covariance matrix, we consider $N_m$ subsamples $(\bm X_1^{(j)},\ldots,\bm X_m^{(j)})$, $1\le j\le N_m$, of the original sample $(\bm X_1,\ldots,\bm X_n)$ drawn uniformly without replacement. We denote $\widetilde{M}_{m,k_m,I}^{(j)}$ and $\widetilde{P}^{(j)}_{m,k_m,I}$ the statistics of interest using the subsample $(\bm X_1^{(j)},\ldots,\bm X_m^{(j)})$. Then we derive a bootstrapped distribution for any pair $i$, $i'\in I$ such as
\begin{align*}
L_{n,i,i'}^{(m)}(x)=\dfrac1{N_m}\sum_{j=1}^{N_m}  & \mathds 1\left\{\sqrt{k_m} \left( \frac{\widetilde{M}_{m,k_m,I}^{(j)}
	(\bm 1_{\{i\}},1)+\widetilde{M}^{(j)}_{m,k_m,I}(\bm 1_{\{i'\}},1)}{2\widetilde{P}^{(j)}_{m,k_m,I}} \right. \right. \\
& \qquad \qquad \qquad \qquad \left. \left. - \frac{\widetilde{M}_{n,k_n,I}(\bm 1_{\{i\}},1)+\widetilde{M}_{n,k_n,I}(\bm 1_{\{i'\}},1)}{2\widetilde{P}_{n,k_n,I}}\right)\le x\right\}\,.
\end{align*}

We then calculate the first and third quartiles $q_{1,i,i'}^{(m)}$ and $q_{3,i,i'}^{(m)}$ of the bootstrapped distribution and consider the standardized interquartile statistics 
$$
\widetilde \sigma_{i,i'}^{(m)}=\dfrac{q_{3,i,i'}^{(m)}-q_{1,i,i'}^{(m)}}{q_3-q_1}
$$ 
where $q_1$ and $q_3$ are the first and third quartiles of a standard Gaussian random variable. Finally, we define the entries of our estimator $\widetilde{\bm V}_{n,I}^{(m)}$ such as 
$$
\widetilde{\bm V}_{n,I,i,i'}^{(m)}=\begin{cases}
{~\widetilde \sigma_{i,i}^{(m)}}^2 & \text{if}\qquad i=i'\,,\\
{2\widetilde \sigma_{i,i'}^{(m)}}^2-\big({~\widetilde \sigma_{i,i}^{(m)}}^2+{~\widetilde \sigma_{i,i}^{(m)}}^2\big)/2& \text{else}\,.
\end{cases}
$$
Notice that our estimation procedure also depends on the number of subsamples $N_m$ that we do not fix at ${n}\choose{m}$ for computational efficiency. We suppress this dependence for clarity.
\begin{proposition}
Let $\bm X_l$, $l \in \NN$, be independent copies of a regularly varying $[0,\infty)^d$-valued random vector $\bm X$  satisfying the assumptions of Cor.~\ref{coro:plugin-clt-final}. Then choosing $m_n$ such that $m_n\to \infty$, $m_n/n \to 0$ and $k_n/k_{m_n}\to \infty$ and $N_m\to \infty$ we have 
$\widetilde V_{I,n}^{(m)} \to_p \widetilde V_I$ as $n\to\infty$ for all $I\subset\{1,\ldots,d\}$.
\end{proposition}
\begin{proof}
We apply Theorem 2.1 (ii) of \cite{politis1994large} to the statistics
$$
\left(\sqrt{k_n} \left( \frac{\widetilde{M}_{n,k_n,I}(\bm 1_{\{i\}},1)+\widetilde{M}_{n,k_n,I}(\bm 1_{\{i'\}},1)}{2\widetilde{P}_{n,k_n,I}} -  \frac 1 2(c(\bm 1_{\{i\}}, \bm 1_I, \bm1, 1)+c(\bm 1_{\{i'\}}, \bm 1_I, \bm1, 1))\right)\right)$$
remarking that the limiting centered Gaussian distribution with variance
\begin{equation}\label{eq:expressvar}
\widetilde \sigma_{i,i'}^2=\begin{cases}\widetilde V_{I,i,i'}&\text{if}\qquad  i=i'\,,\\
\widetilde V_{I,i,i'}/2+\widetilde V_{I,i,i'}/4+\widetilde V_{I,i,i'}/4& \text{else}\,,
\end{cases}
\end{equation}
is continuous for all $ i,i'\in I$. Then $L_{n,i,i'}^{(m)}(x)$ converges in probability to the limiting distributions, $L_{i,i'}(x)$ say, uniformly for all $x\in \RR$. The uniform convergence ensures the convergence in probability of the quantiles $q_{j,i,i'}^{(m)}$ towards  $\widetilde \sigma_{i,i'}q_j$ for all $i,i'\in I$ and $j=1,3$. Thus $
\widetilde \sigma_{i,i'}^{(m)}$ converges in probability to  $
\widetilde \sigma_{i,i'}$ and $\widetilde V_{I,n}^{(m)}$ to $\widetilde V_I$ as well because of the expression \eqref{eq:expressvar} of the limiting variances $
\widetilde \sigma_{i,i'}^2$, $i,i'\in I$.
\end{proof}

Note that, by construction, the estimated matrices $\widetilde V_{I,n}^{(m)}$ might be indefinite, which impedes the minimization of the quadratic form $\bm v_I \mapsto \widetilde V_{I,n}^{(m)}(\bm v_I)$ on $\partial B_1^+(\bm 0)$. In practice, it turns out to define
$\widetilde \sigma_{i,i'}^{(m)}$ as the sample variance of the bootstrap sample. Provided that the same subsamples $(\bm X_1^{(j)},\ldots,\bm X_m^{(j)})$, $1 \leq j \leq N_m$, are used for each pair $(i,i') \in \{1,\ldots,d\}^2$, this guarantees that the corresponding matrix $\widetilde {\bm V}_{n,I}^{(m)}$ is positive semi-definite. Numerical experiments illustrate that the resulting estimator performs as expected (see Section \ref{sec:simu}) in practice.  However, a rigorous proof of weak consistency of $\widetilde {\bm V}_{n,I}^{(m)}$ would require convergence of second order moments of the bootstrapped distribution to the corresponding moments of the Gaussian limits, which we cannot show without imposing strong additional assumptions. 	

\section{Numerical illustrations} \label{sec:illustr}
\subsection{Simulated example I: H\"usler-Reiss distribution} \label{sec:simu}

As a first simulated example, we consider  a popular class of multivariate extreme value models, the H\"usler-Reiss distribution \citep{huesler-reiss-1989} whose cumulative distribution is given by
$$ \PP(X_1 \leq x_1, \ldots, X_d \leq x_d) = \exp\left(- \EE\left[ \max_{i=1,\ldots,d} \frac{1}{x_i} e^{W_i - \Var(W_i/2)}\right]\right), \quad x_1,\ldots,x_d > 0, $$
where $(W_1,\ldots,W_d)$ is a centered Gaussian random vector. By definition, the 
random vector $\bm X$ is max-stable with unit Fr\'echet margins. Note that its distribution is uniquely determined by the variogram matrix
$$ \Gamma = (\Gamma_{ij})_{1 \leq i,j \leq d}, \quad \Gamma_{ij} = \Var(W_i-W_j). $$

\subsubsection{Bivariate Case}

In the bivariate case, the extremal coefficient of the vector $(X_1, X_2)$ is of the form
$$ \tau = \tau_{\{1,2\}} = 2 \Phi(\sqrt{\Gamma_{12}}/2) \in [1,2], $$
i.e.\ the model can interpolate between full dependence $(\Gamma_{12}=0)$ and
asymptotic independence $(\Gamma_{12} \to \infty)$.

We compare the four types of estimators for $\tau$, namely the benchmark estimators and 
the moment-based estimators, both with known and unknown margins as described in Sections \ref{sec:deterministic} and \ref{sec:random}, respectively.
More precisely, in the case of known margins where we can work with standardized observations $\bm X^*_1, \bm X^*_2, \ldots, \bm X^*_n$ and use
\begin{itemize}
	\item the benchmark estimator 
	$$\frac 1 {\widehat \tau_{I,n}^{BK} (\bm v)} = \frac{\widehat{P}_{n,u,\{1,2\}}^{\mathrm{(conv)}}(\bm v)}
	{P_{n,u,\{1,2\}}};$$
	\item the moment-based plug-in estimator
	$$  \frac 1 {\widehat \tau_{I,n}^{MK}} =
	\frac{\widehat M_{n,u_n,\{1,2\}}(\widehat{\bm v}^*_{n,u_n,\{1,2\}})}
	{\widehat{P}_{n,u_n,\{1,2\}}} $$
	based on the estimated ``optimal'' weights $\widehat{\bm v}^*_{n,u_n,\{1,2\}};$
	\item and, for comparison, the perfect moment-based estimator
	$$  \frac 1 {\widehat \tau_{I,n, \mathrm{opt}}^{MK}} =
	\frac{\widehat M_{n,u_n,\{1,2\}}(\bm v^*)}
	{\widehat{P}_{n,u_n,\{1,2\}}} $$
	with theoretically optimal weight $\bm v^*$.
\end{itemize}
Note that, for simulated data from the bivariate H\"usler--Reiss model, no standardization is required as the (unit Fr\'echet) marginal distributions are already identical and regularly varying with index $\alpha=1$. From Example \ref{ex:minvar-known-margins}, it can be seen that the variance minimizing linear combination is unique whenever $0 < \Gamma_{12} < \infty$ and is given by 
$\bm v^* = \left(1/2, 1/2\right)$.
As discussed in Section \ref{sec:deterministic}, the asymptotic distribution of the benchmark estimator does not depend on the choice 
of $\bm v_I \in \partial B_1^+(\bm 0)$. Here we choose 
$\widehat \tau_{I,n}^{BK} = \widehat \tau_{I,n}^{BK} (\bm v^*)$
as benchmark.
\medskip

Even though the margins are known, it might sometimes be beneficial to work with the rank transformations designed for the case of unknown margins as discussed in Section
\ref{sec:random}. Thus, we also consider 
\begin{itemize}      
	\item the ``benchmark'' estimator
	$$ \frac 1 {\widehat \tau_{I,n}^{BU}} = \frac{1}{\frac{n}{k} \widetilde{P}_{n,k,\{1,2\}}} 
	= \frac 1 {\widehat L_{k,n}(\bm 1)} ;$$
	\item the moment-based plug-in estimator
	$$ \frac 1 {\widehat \tau_{I,n}^{MU}} = \frac{\widetilde{M}_{n,k,\{1,2\}}(\widetilde{\bm v}_{n,\{1,2\}},1)}
	{\widetilde{P}_{n,k,\{1,2\}}} $$
	based on estimated ``optimal'' weights $\widetilde{\bm v}_{n,\{1,2\}};$
	\item and, again, the perfect moment-based plug-in estimator
	$$ \frac 1 {\widehat \tau_{I,n, \mathrm{opt}}^{MU}} = \frac{\widetilde{M}_{n,k,\{1,2\}}(\widetilde{\bm v},1)}
	{\widetilde{P}_{n,k,\{1,2\}}} $$
	with theoretically optimal weight $\widetilde{\bm v}.$
\end{itemize} 
From symmetry arguments, it can be easily seen that the ``optimal''
linear combination (if unique) is necessarily given by
$\widetilde{\bm v} = \left(1/2, 1/2\right)$.

We perform a simulation study to compare the estimators in a setting with finite sample sizes. Here, we consider three different scenarios:
\begin{itemize}
	\item Scenario 1: $\Gamma_{12}=0.1$ (strong dependence; $\tau \approx 1.13$)
	\item Scenario 2: $\Gamma_{12}=2$ (medium dependence; $\tau \approx 1.52$)
	\item Scenario 3: $\Gamma_{12}=10$ (weak dependence; $\tau \approx 1.89$)
\end{itemize}	

In each of these scenarios, we consider $n=5\,000$ independent observations of $\bm X = (X_1,X_2)$ and apply the estimators with a threshold $u_n = \Phi_1^{-1}(0.98)$ and $k_n= 0.02 \cdot n$, respectively. For the bootstrap procedure that is needed to estimate the optimal variance, we choose the subsample size $m_n=0.25 \cdot n$ and $N_m=n$.	
We determine the biases and standard deviations by $500$ repetitions.

The results are displayed in Table \ref{tab:simu-res}. First, it can be seen that, at least in the cases of strong and moderate dependence, all the biases are much smaller than the standard deviations and are therefore negligible. Additional
calculations of the asymptotic variances $\mathrm{AVar}^{BK}$, $\mathrm{AVar}^{MK}_{\mathrm{opt}}$, $\mathrm{AVar}^{BU}$ and
$\mathrm{AVar}^{MU}_{\mathrm{opt}}$ according to the theoretical results given in Sections \ref{sec:deterministic} and \ref{sec:random}, respectively, show that the empirical standard deviations are very close to their theoretical counterparts even though the bootstrap procedure tends to slightly underestimate the true asymptotic standard deviation of the moment-based estimator in case of unknown margins (by roughly $10\,\%$). In particular, we obtain the expected ordering of the four methods if the optimal weights for the moment-based estimators are chosen:
$$ \mathrm{AVar}^{MK}_{\mathrm{opt}} < \mathrm{AVar}^{MU}_{\mathrm{opt}} < \mathrm{AVar}^{BU} < \mathrm{AVar}^{BK}.$$

This ordering is kept if we replace the theoretically optimal weights by the estimated ones; in our simulation study  the variances of the plug-in moment estimators are almost identical to the ones with optimal wights $v^*$ and $\widetilde v$, which is well in line with the theoretical results. However, it has to be noted that taking non-optimal weights might result in significantly larger variances -- in particular in the case of known margins.  This is demonstrated in Figure \ref{fig:avar_v} where the asymptotic variance is plotted as a function of the weight vector $\bm v$. Compared to the case of known margins, the relative changes of the asymptotic variance for the moment-based estimator for unknown margins turn out to be rather small.

\begin{table}
	\begin{center}
		$\Gamma_{12} =0.1$ ($\tau\approx 1.13$, strong dependence)\\[1mm]
		
		\begin{tabular}{|l||r|r|r|r|r|r|}
			\hline \phantom{$A^{X^X}_{Y_Y}$}
			& $1 / \widehat{\tau}_{I,n}^{BK}$ & $1 / \widehat{\tau}_{I,n}^{MK}$
			& $1 / \widehat{\tau}_{I,n,\mathrm{opt}}^{MK}$
			& $1 / \widehat{\tau}_{I,n}^{BU}$ & $1 / \widehat{\tau}_{I,n}^{MU}$
			& $1 / \widehat{\tau}_{I,n,\mathrm{opt}}^{MU}$ \\ \hline	
			bias                 & 0.003 & 0.001 & 0.000 & 0.019 & -0.001 & 0.001\\ \hline
			std.~deviation       & 0.030 & 0.007 & 0.007 & 0.019 & 0.012 & 0.012\\ \hline			
		\end{tabular}
		\medskip

		$\Gamma_{12}=2$ ($\tau\approx 1.52$, moderate dependence)\\[1mm]
				
		\begin{tabular}{|l||r|r|r|r|r|r|}
		\hline \phantom{$A^{X^X}_{Y_Y}$}
		& $1 / \widehat{\tau}_{I,n}^{BK}$ & $1 / \widehat{\tau}_{I,n}^{MK}$
		& $1 / \widehat{\tau}_{I,n,\mathrm{opt}}^{MK}$
		& $1 / \widehat{\tau}_{I,n}^{BU}$ & $1 / \widehat{\tau}_{I,n}^{MU}$
		& $1 / \widehat{\tau}_{I,n,\mathrm{opt}}^{MU}$ \\ \hline
		bias                 & 0.009 & 0.005 & 0.004 & 0.010 & 0.004 & 0.004\\ \hline
		std.~deviation       & 0.040 & 0.010 & 0.010 & 0.020 & 0.016 & 0.016\\ \hline			
		\end{tabular}
		\medskip
		
		$\Gamma_{12} =10$ ($\tau\approx 1.89$, weak dependence)\\[1mm]			
		
		\begin{tabular}{|l||r|r|r|r|r|r|}
		\hline \phantom{$A^{X^X}_{Y_Y}$}
		& $1 / \widehat{\tau}_{I,n}^{BK}$ & $1 / \widehat{\tau}_{I,n}^{MK}$
		& $1 / \widehat{\tau}_{I,n,\mathrm{opt}}^{MK}$
		& $1 / \widehat{\tau}_{I,n}^{BU}$ & $1 / \widehat{\tau}_{I,n}^{MU}$
		& $1 / \widehat{\tau}_{I,n,\mathrm{opt}}^{MU}$ \\ \hline
			bias      & 0.022 & 0.014 & 0.014 & 0.010 & 0.013 & 0.013 \\ \hline
			std.~deviation  & 0.038 & 0.006 & 0.006 & 0.009 & 0.007 & 0.007  \\ \hline			
		\end{tabular}
		
	\end{center} 
	\caption{Results for 500 simulations from the max-stable H\"usler--Reiss models in the three scenarios specified above with
		$n=5\,000$, $k=100$ and $u_n = \Phi^{-1}_1(1-k/n) = \Phi^{-1}_1(0.98)$.}
	\label{tab:simu-res}
\end{table}

\begin{figure}
	\centering \includegraphics[width=0.99\textwidth]{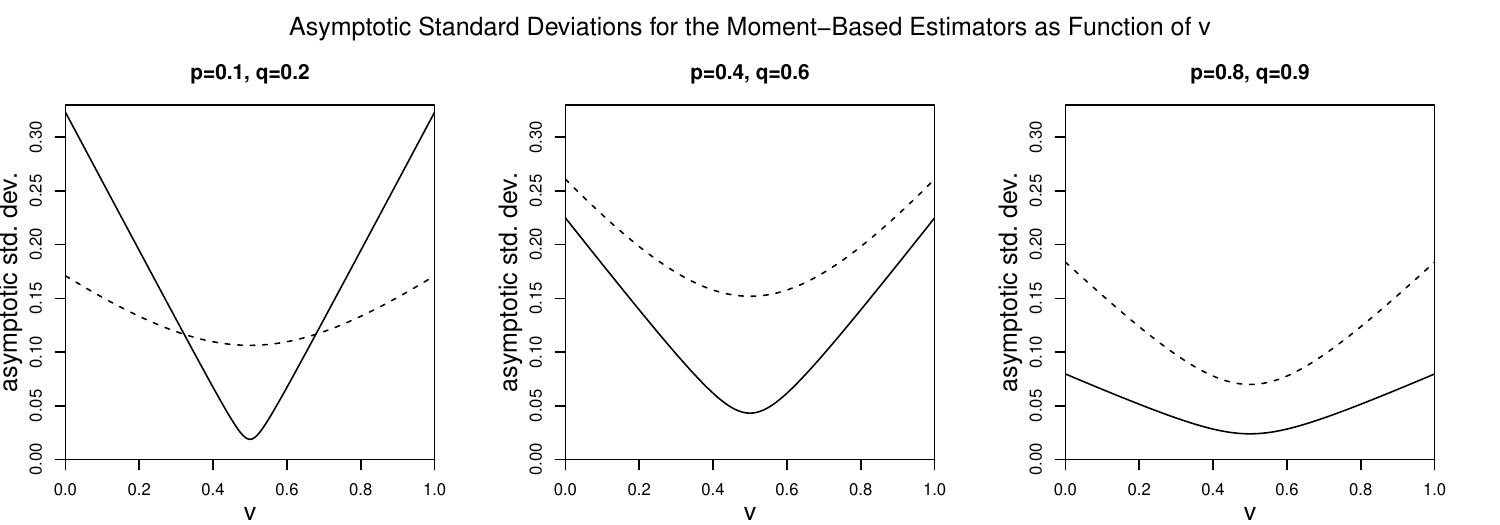}
	\caption{The asymptotic standard deviations $\sqrt{V((v,1-v))}$ (solid lines)
		and $\sqrt{\widetilde{V}((v,1-v))}$ (dashed line) of the moment-based estimators with known and unknown margins, respectively, for each of
		the three scenarios reflecting strong, moderate and weak dependence (from left to right).} \label{fig:avar_v}
\end{figure}
We further analyze the sensitivity of the novel moment-based estimators with respect to the choice of $k$. To this end, the results of the estimators for ten realizations from the max-stable H\"usler--Reiss in the scenario of moderate dependence described above are displayed as a function of $k$ are displayed in Figure \ref{fig:kstability}. Here, we choose $k \in \{50, 100, \ldots, 500\}$ corresponding to the upper $1\,\%, 2\,\%, \ldots, 10\%$ of the data. It can be seen that the results are rather stable with the estimator  $1 / \widehat{\tau}_{I,n}^{MU}$ being slightly more uncertain than  $1 / \widehat{\tau}_{I,n}^{MK}$, which is in line with the results from Table 	\ref{tab:simu-res}.
    \begin{figure}
    	\centering \includegraphics[width=0.99\textwidth]{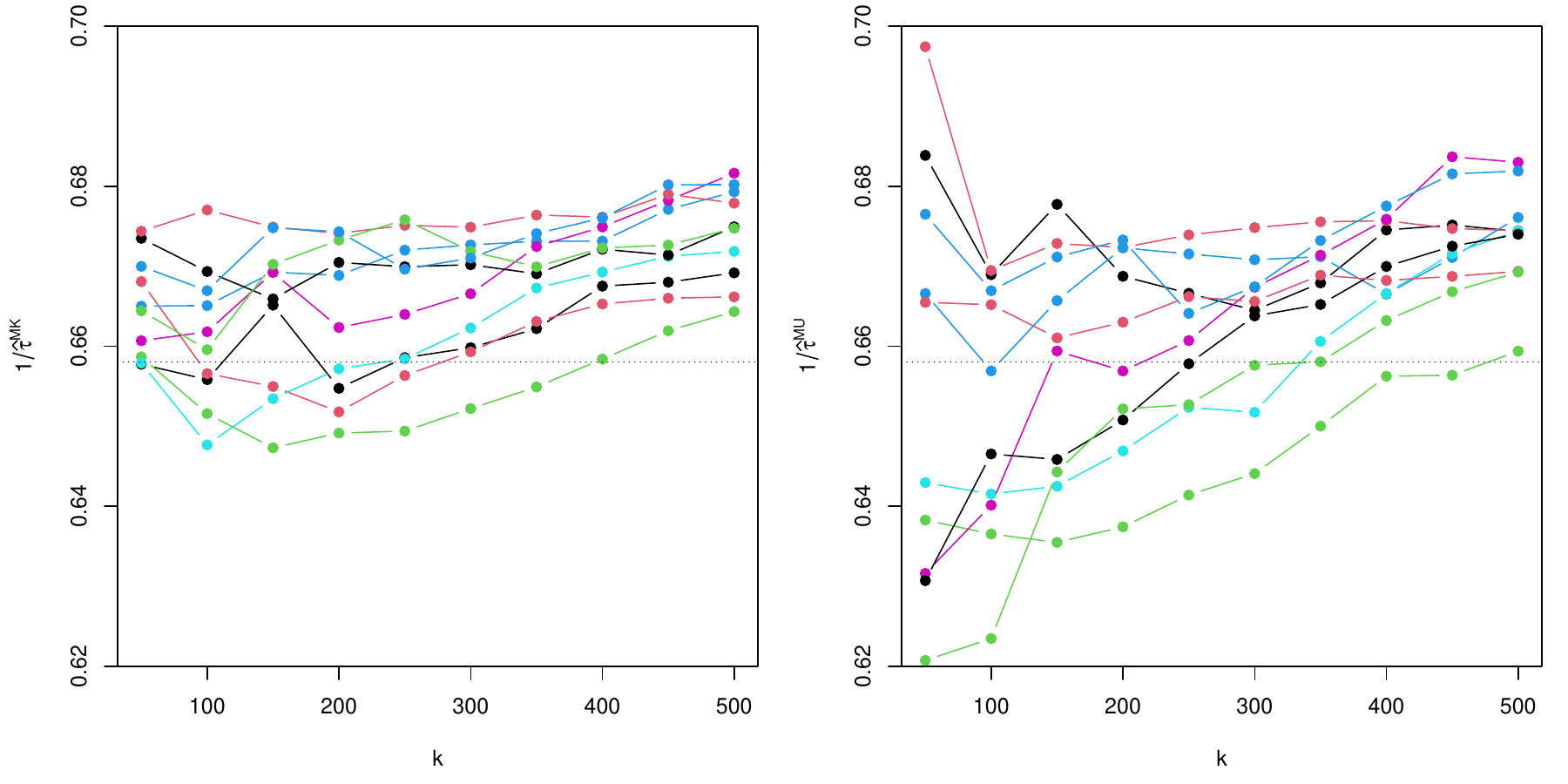}
    	\caption{The results of the moment-based estimators, $1 / \widehat{\tau}_{I,n}^{MK}$ (left) and $1 / \widehat{\tau}_{I,n}^{MU}$ (right), as a function of $k$ for ten realizations of the max-stable H\"usler--Reiss model with moderate dependence ($\Gamma_{12}=2$) and $n=5\,000$.} \label{fig:kstability}
    \end{figure}

	As mentioned at the beginning of the section, there is a one-to-one correspondence between $\tau = \tau_{\{1,2\}}$ and $\Gamma_{12}$ via 
	$\tau = 2 \Phi(\sqrt{\Gamma_{12}}/2)$ or, equivalently, $\Gamma_{12} = \left[ 2 \Phi^{-1}\left( \frac {\tau} 2\right)\right]^2$.
	Thus, we obtain a plugin estimator $ \widehat{\Gamma}_{12,n}^{\text{m}}$ from 
	$\widehat \tau_{I,n}^{\text{m}}$ via 
	$$  \widehat{\Gamma}_{12,n}^{\text{m}} = \left[ 2 \Phi^{-1}\left( \frac {\widehat{\tau}_{I,n}^{\text{m}}} 2\right)\right]^2, \qquad \text{m} \in \{\text{BK}, \text{MK}, \text{BU}, \text{MU}\}. $$
	By the delta method, the asymptotic variances are expected to be ordered in the same way as the variances for the estimators for $\tau$. To confirm this behaviour also in a finite sample setting, we apply the plugin estimators to the data from the simulation study above. The results are displayed in Table \ref{tab:simu-res-gamma}. Similarly to the study above, in the cases of strong and 
	moderate dependence, the bias is negligible and the standard deviations show the expected ordering. In the case of weak dependence, the biases of the of the estimators for $\Gamma_{12}$ are even more pronounced then the biases of the estimators for $1/\tau$. In particular, in some simulations, the benchmark estimator $\widehat{\tau}_{I,n}^{\text{MK}}$ even exceeds the theoretical maximum of $2$ -- consequently, the corresponding value for $\Gamma_{12}$ cannot be calculated, i.e., we obtain invalid estimates. For the other three estimators, the standard deviations again exhibit the expected ordering.

\begin{table}
	\begin{center}	
		\begin{tabular}{|cl||r|r|r|r|}
			\hline \phantom{$A^{X^X}_{Y_Y}$}
			& & $\widehat{\Gamma}_{12,n}^{BK}$ & $\widehat{\Gamma}_{12,n}^{MK}$
		      & $\widehat{\Gamma}_{12,n}^{BU}$ & $\widehat{\Gamma}_{12,n}^{MU}$
		    \\ \hline
		   \multirow{2}{*}{$\Gamma_{12}=0.1$} &	bias   & 0.007 & -0.001 & -0.016 & 0.004 \\ \cline{3-6}
			& std.~deviation  & 0.069 & 0.013 & 0.036 & 0.025  \\ \hline 	
			\multirow{2}{*}{$\Gamma_{12}=2$} &	bias   & -0.001 & -0.008 & -0.016 & -0.006 \\ \cline{3-6}
			& std.~deviation  & 0.889 & 0.207 & 0.381 & 0.323  \\ \hline 
			\multirow{2}{*}{$\Gamma_{12}=10$} &	bias   & NA & -2.123 & -1.457 & -1.983 \\ \cline{3-6}
			& std.~deviation  & NA & 0.760 & 1.413 & 0.956  \\ \hline	
		\end{tabular}
	\end{center} 
	\caption{Results for the plugin estimators for $\Gamma_{12}$ for 500 simulations from the max-stable H\"usler--Reiss models in the three scenarios specified above with
		$n=5\,000$, $k=100$ and $u_n = \Phi^{-1}_1(1-k/n) = \Phi^{-1}_1(0.98)$.}
	\label{tab:simu-res-gamma}
\end{table}

Here, it is important to note that neither the benchmark nor the moment-based estimator are restricted to the setting of limiting multivariate extreme value distributions, but allow for estimation of the extremal coefficients, or, equivalently, the estimation of the corresponding dependence parameters, for any distribution from the max-domain of attraction. Thus, we duplicate the setting of the simulation study above simulating from the H\"usler--Reiss--Pareto distributions from the max-domain of attraction instead. As the marginal distributions are no longer unit Fr\'echet, but only asymptotically equivalent to that distribution, we do not consider the estimators for known margins, but only their variants for the case of unknown margins. The results are displayed in Table \ref{tab:simu-res-gamma-mda}.
While the results for the cases of strongly and moderately dependent data are very similar to the max-stable setting, the biases in the weakly dependent case are much smaller now, coming with the price of slightly larger standard deviations. This phenomenon can be explained by the fact that the angular part of a H\"usler--Reiss--Pareto random vector exactly follows the spectral measure, while the spectral measure still provides an approximation in the max-stable setting. In any case, the newly developed moment-based estimators provide smaller variances than the corresponding benchmark estimators.

\begin{table}
	\begin{center}	
			\begin{tabular}{|cl||r|r|}
				\hline \phantom{$A^{X^X}_{Y_Y}$}
				& & $\widehat{\Gamma}_{12,n}^{BU}$ & $\widehat{\Gamma}_{12,n}^{MU}$
				\\ \hline
				\multirow{2}{*}{$\Gamma_{12}=0.1$} & bias   & -0.011 & 0.005 \\ \cline{3-4}
				& std.~deviation  & 0.037 & 0.024 \\ \hline 	
				\multirow{2}{*}{$\Gamma_{12}=2$} &	bias   &  -0.071 & 0.081 \\ \cline{3-4}
				& std.~deviation  & 0.383 & 0.333  \\ \hline 
				\multirow{2}{*}{$\Gamma_{12}=10$} &	bias   & -0.720 & 0.317  \\ \cline{3-4}
				& std.~deviation  & 1.580 & 1.378  \\ \hline	
			\end{tabular}
	\end{center} 
\caption{Results for the plugin estimators for $\Gamma_{12}$ for 500 simulations from the H\"usler--Reiss--Pareto models in the three scenarios specified above with
		$n=5\,000$, $k=100$ and $u_n = \Phi^{-1}_1(1-k/n) = \Phi^{-1}_1(0.98)$.}
	\label{tab:simu-res-gamma-mda}
\end{table}

\subsubsection{Multivariate case}

We repeat our study for the multivariate case. Here, we consider the case of a fully symmetric variogram matrix
$$ \Gamma =
  \begin{pmatrix}
   \phantom{.}0\phantom{.} & \phantom{.}1\phantom{.} & \dots & \dots & \phantom{.}1\phantom{.} \\
   1 & 0 & \ddots & & \vdots \\
   \vdots & \ddots & \ddots & \ddots & \vdots \\
   \vdots & &  \ddots & 0 & 1 \\
   1 & \dots & \dots & 1 & 0
\end{pmatrix} \in \RR^{d \times d}. $$
We consider three different scenarios with dimensions $d=3$, $d=5$ and $d=8$, respectively. For each of these scenarios, we compare the same six estimators as in the previous subsection. Again we can argue that, for symmetry reasons, the optimal weight vector, if unique, is necessarily given by
$\bm v^* = \widetilde{\bm v} = \left( 1/d, \ldots, 1/d\right)$.  
The results for $n=5\,000$, $k=100$ and $500$ repetitions, i.e., the same setup as in the bivariate case, are displayed in Table 
\ref{tab:simu-res-multivariate}.

\begin{table}
	\begin{center}
		$d=3:$\\[1mm]
		
		\begin{tabular}{|l||r|r|r|r|r|r|}
			\hline \phantom{$A^{X^X}_{Y_Y}$}	
			& $1 / \widehat{\tau}_{I,n}^{BK}$ & $1 / \widehat{\tau}_{I,n}^{MK}$
			& $1 / \widehat{\tau}_{I,n,\mathrm{opt}}^{MK}$
			& $1 / \widehat{\tau}_{I,n}^{BU}$ & $1 / \widehat{\tau}_{I,n}^{MU}$
			& $1 / \widehat{\tau}_{I,n,\mathrm{opt}}^{MU}$ \\ \hline	
			bias         & 0.006 & 0.003 & 0.002 & 0.010 & 0.002 & 0.001\\ \hline
			std.~deviation  & 0.038 & 0.010 & 0.010 & 0.024 & 0.020 & 0.020\\ \hline			
		\end{tabular}
		\medskip
		
		$d=5:$\\[1mm]
		
		\begin{tabular}{|l||r|r|r|r|r|r|}
			\hline \phantom{$A^{X^X}_{Y_Y}$}	
			& $1 / \widehat{\tau}_{I,n}^{BK}$ & $1 / \widehat{\tau}_{I,n}^{MK}$
			& $1 / \widehat{\tau}_{I,n,\mathrm{opt}}^{MK}$
			& $1 / \widehat{\tau}_{I,n}^{BU}$ & $1 / \widehat{\tau}_{I,n}^{MU}$
			& $1 / \widehat{\tau}_{I,n,\mathrm{opt}}^{MU}$ \\ \hline
			bias               & 0.008 & 0.003 & 0.002 & 0.010 & 0.002 & 0.003\\ \hline
			std.~deviation     & 0.034 & 0.009 & 0.009 & 0.022 & 0.021 & 0.022\\ \hline			
		\end{tabular}
		\medskip
		
		$d=8:$\\[1mm]		
		
		\begin{tabular}{|l||r|r|r|r|r|r|}
			\hline \phantom{$A^{X^X}_{Y_Y}$}
			& $1 / \widehat{\tau}_{I,n}^{BK}$ & $1 / \widehat{\tau}_{I,n}^{MK}$
			& $1 / \widehat{\tau}_{I,n,\mathrm{opt}}^{MK}$
			& $1 / \widehat{\tau}_{I,n}^{BU}$ & $1 / \widehat{\tau}_{I,n}^{MU}$
			& $1 / \widehat{\tau}_{I,n,\mathrm{opt}}^{MU}$ \\ \hline
			bias            & 0.008 & 0.004 & 0.003 & 0.009 & 0.000 & 0.001 \\ \hline
			std.~deviation  & 0.034 & 0.007 & 0.007 & 0.021 & 0.023 & 0.023 \\ \hline
		\end{tabular}
		
	\end{center} 
	\caption{Simulation results for the three multivariate scenarios specified above with $n=5\,000$, $k=100$ and $u_n  = \Phi^{-1}_1(1 - k/n) = \Phi^{-1}_1(0.98)$.}
	\label{tab:simu-res-multivariate}
\end{table}

The ordering of the estimators is similar to the bivariate setting with the exception that the asymptotic variances of the estimators with unknown margins are very close to each other: 
$$ \mathrm{AVar}^{MK}_{\mathrm{opt}} < \mathrm{AVar}^{MU}_{\mathrm{opt}} \approx \mathrm{AVar}^{BU} < \mathrm{AVar}^{BK}.$$
Again, the observed standard deviations correspond well to theoretical ones apart from the fact that the asymptotic standard deviation of the moment-based estimator for the case of unknown margins is again slightly underestimated (again by roughly $10\,\%$) by the bootstrap procedure with $m_n = 0.2 \cdot n$ and $N_m=n$.

\subsection{Simulated example II: Multivariate Gaussian distribution}

In order to demonstrate the performance of our estimators in the case of asymptotic independence, we also consider the case that $\bm X$ follows a distribution with unit Fr\'echet margins, but a multivariate Gaussian copula. We consider the symmetric case with
$$ \begin{pmatrix}
\phantom{.}1\phantom{.} & \phantom{.}\rho\phantom{.} & \dots & \dots & \phantom{.}\rho\phantom{.} \\
\rho & 1 & \ddots & & \vdots \\
\vdots & \ddots & \ddots & \ddots & \vdots \\
\vdots & &  \ddots & 1 & \rho \\
\rho & \dots & \dots & \rho & 1
\end{pmatrix} \in \RR^{d \times d} $$
being the covariance matrix of the underlying Gaussian copula.
Thus, we obtain the theoretical extremal coefficient $\tau_{\{1,\ldots,d\}} = d$ whenever $\rho < 1$.
 Here, we choose $\rho=0.5$ and consider the $3$-, $5$- and $8$-dimensional case, respectively. In the case of asymptotic independence, the matrices $\bm V$ and $\widetilde{\bm V}$ might be degenerate, which hampers the optimization of the asymptotic variance. Therefore, we just fix the weights to the default choice
 $ \bm v^* = \widetilde{\bm v} = \left(1/d, \ldots, 1/d \right)$. 
The results for $n=5\,000$, $k=100$ and $500$ repetitions are displayed in Table 
 \ref{tab:simu-res-gauss}.
 
\begin{table}
	\begin{center}
	$d=3:$\\[1mm]
		
	\begin{tabular}{|l||r|r|r|r|}
	\hline \phantom{$A^{X^X}_{Y_Y}$}
	& $1 / \widehat{\tau}_{I,n}^{BK}$ & $1 / \widehat{\tau}_{I,n}^{MK}$
	& $1 / \widehat{\tau}_{I,n}^{BU}$ & $1 / \widehat{\tau}_{I,n}^{MU}$ \\ \hline
	bias            & 0.087 & 0.075 & 0.049 & 0.071  \\ \hline
	std.~deviation  & 0.017 & 0.005 & 0.008 & 0.005    \\ \hline			
	\end{tabular}
	\medskip
		
	$d=5:$\\[1mm]
		
	\begin{tabular}{|l||r|r|r|r|}
	\hline \phantom{$A^{X^X}_{Y_Y}$}
	& $1 / \widehat{\tau}_{I,n}^{BK}$ & $1 / \widehat{\tau}_{I,n}^{MK}$
	& $1 / \widehat{\tau}_{I,n}^{BU}$ & $1 / \widehat{\tau}_{I,n}^{MU}$ \\ \hline
	bias            & 0.084 & 0.077 & 0.047 & 0.072  \\ \hline
	std.~deviation  & 0.013 & 0.003 & 0.005 & 0.003  \\ \hline			
	\end{tabular}
	\medskip
		
	$d=8:$\\[1mm]		
		
	\begin{tabular}{|l||r|r|r|r|}
	\hline \phantom{$A^{X^X}_{Y_Y}$}
	& $1 / \widehat{\tau}_{I,n}^{BK}$ & $1 / \widehat{\tau}_{I,n}^{MK}$
	& $1 / \widehat{\tau}_{I,n}^{BU}$ & $1 / \widehat{\tau}_{I,n}^{MU}$ \\ \hline
	bias            & 0.072 & 0.071 & 0.041 & 0.067  \\ \hline
	std.~deviation  & 0.009 & 0.002 & 0.003 & 0.002  \\ \hline			
	\end{tabular} 
		
	\end{center} 
	\caption{Simulation results for the three asymptotically independent multivariate scenarios specified above with $n=5\,000$, $k=100$ and $u_n = \Phi^{-1}_1(1 - k/n) = \Phi^{-1}_1(0.98)$ based on Gaussian copulas with $\rho=0.5$.}
	\label{tab:simu-res-gauss}
\end{table}

Here, it is important to note that all the estimators exhibit a significant bias which can be explained by the fact that the true value is on the boundary of the estimation interval. However, it can be seen that the variance of the moment estimators is still smaller than the one of the benchmark estimators, which indicates that the moment estimators still show a reasonably good performance in the case of asymptotic independence. 
 
\subsection{Application to Precipitation Data in France} \label{subsec:precip}

We apply our estimators to estimate extremal dependence among different weather stations in France recording heavy precipitation. More precisely, we use daily precipitation data for the fall seasons (September, October, November) in the years 1976 to 2015 at nine weather stations in France, grouped into three regions (northwest, south and northeast) consisting of three stations per region. This dataset has also been analyzed in \citep{buritica2021stable}. 

We start with an explanatory analysis. Based on block maxima over 15 days, we fit generalized extreme value distributions via maximum likelihood separately for each station. All the extreme value indices are estimated to be positive (between 0.01 and 0.60). Even though, due to the rather small number of blocks, the positivity of the estimates is not significant, these results are in line with \cite{bernard-etal-2013} who analyzed heavy precipitation in France for the fall season based on a similar data set. Furthermore, we empirically estimate the pairwise tail dependence coefficients for each pair of stations. The table estimates that there is moderate extremal dependence within each region, while extremal dependence across different regions is much weaker (though not vanishing), see Appendix \ref{subsec:precipE} for details. Overall, the explanatory analysis suggests that the data are heavy-tailed and (at least within each region) asymptotically dependent. Therefore, it is reasonable to assume that they are covered by the framework of regular variation investigated in this work.

In order to avoid marginal standardization we make use of our flexible moment estimator for unknown margins  
$\widehat{\tau}_{I,n}^{MU}$. While this estimator allows for varying tail behaviour in the different components of the random vector $\bm X$, in particular for different tail indices, we also consider a modified version $
 \widetilde{\tau}_{I,n}^{MU}$ based on the assumption of a single tail index:
\begin{align*}
 \widetilde{\tau}_{I,n}^{MU}:=\frac{n^{-1} \sum_{l=1}^n \left(\bm v_I^\top 
 \frac{(\bm X_{l}/\bm X_{k:n})^{\widehat{ \alpha}}}{\ell_I((\bm X_{l}/\bm X_{k:n})^{\widehat{ \alpha}})}\right)^p
	 \mathds 1\{\ell_I(\bm X_l / \bm X_{k:n}) > 1\}}
{n^{-1} \sum_{l=1}^n \mathds 1\{\ell_I(\bm X_l / \bm X_{k:n}) > 1\}}\,,
\end{align*}
using a unique tail index estimator over each region $I$
 \begin{align*}
 \frac 1 {\widehat \alpha_{n,k}} = \frac{n^{-1} \sum_{l=1}^{n} \log\left( \ell_{I}(\bm X_l / \bm X_{k:n}) \right) \mathds 1 \left\{ \ell_{I}(\bm X_l / \bm X_{k:n}) > 1\right\}  }{n^{-1} \sum_{l=1}^n \mathds 1\{\ell_I(\bm X_l / \bm X_{k:n}) > 1}\,.
 \end{align*}

We first consider each region separately, i.e.\ we consider three different 3-dimensional vectors $\bm X^{(northwest)}$, $\bm X^{(south)}$ and $\bm X^{(northeast)}$. In order to avoid effects of temporal clustering, we consider block maxima over 5 days leading to a (nearly uncorrelated) time series of 720 observations at each station. For both estimators, we choose $k=40$. The results for the estimated extremal coefficients within each region are displayed in Table \ref{tab:application_regions}.

\begin{table}
	
\begin{center}	
		\begin{tabular}{|l||r|r|}
	\hline	
	region & based on different tail indices & based on single tail index \\ \hline
	northwest  & 1.89 (0.020) & 1.85 (0.018)  \\ \hline
	south      & 1.98 (0.019) & 1.89 (0.018)    \\ \hline			
	northeast  & 1.95 (0.020) & 1.95 (0.019)    \\ \hline		
\end{tabular}

\end{center} 
\caption{Estimated extremal coefficients for extreme precipitation within each region. The estimated standard deviations via bootstrap are given in brackets.}
\label{tab:application_regions}
\end{table}

It can be seen that there is moderate dependence within each region with extremal indices between 1.85 and 2 given a theoretical range of $[1,3]$. This is in line with the results of the explanatory analysis in Appendix \ref{subsec:precipE}. Interestingly, for both regions in the north, there are only little differences between the two estimators, while there are larger differences in the south. There the use of a single tail index would yield spurious dependence among the stations and reduce artificially the estimated tail dependence coefficient. The estimated standard deviations of both estimators are similar. Therefore, the more general estimator $\widehat{\tau}_{I,n}^{MU}$ appears to be preferable in this context. For this estimator, the effect of the choice of $k$ is also further analyzed in Appendix \ref{subsec:precipE}.
\medskip

Furthermore, we consider extremal dependence across the different regions. To this end, we consider the vector $\bm Y = (\max\{\bm X^{(northwest)}\}, \max\{\bm X^{(south)}\}, \max\{\bm X^{(northeast)} \}) \in \RR^3$ and estimate the extremal coefficient using the same estimators as above. This results in estimates of $2.62$ $(0.007)$ and $2.60$ $(0.007)$, respectively, showing that there is only little dependence across the different regions as also suggested by the results of the explanatory analysis in Appendix \ref{subsec:precipE}.

\section{Conclusion}\label{sec:conclusion}

We show some benefits of estimating extremal properties with moments of convex combination of spectral components. An analysis of the asymptotic variances yields a natural strategy consisting in plugging in the asymptotically optimal weights. This methodology is shown to efficiently reduce the variance when estimating the extremal coefficient by first order moments.  As demonstrated in the case of H\"usler--Reiss models, for many popular parametric models in multivariate
extremes there is a one-to-one correspondence between the extremal coefficients and the parameters. Consequently, our approach will result in more accurate parameter estimation for these models.

For more sophisticated parametric models, moments of higher orders $\EE[(\bm v_I^T \bm \Theta^I)^p]$, $p\ge1$, are also meaningful and can be used for asserting other properties of the spectral distribution. They represent interesting alternatives to the stable tail dependence function $L$ as, for small $p$, such functions of $\bm v$ have nice regularity properties (constant for $p=1$, quadratic for $p=2$) that have proven to be very useful for inference purpose.

We focus on reducing the variance under ad-hoc assumptions implying bias negligibility. In our numerical illustrations, the biases of our procedures were empirically small. A systematic bias analysis and potential bias reduction  for this new class of estimators should be considered in the future. Remind that, in the case of unknown margins, our procedure requires some variance estimation. To this end, we propose a subsampling step which could also be useful for other inference methods in extremes. In our simulation study, we found empirically that a subsampling ratio of $1/4$ provides sufficiently accurate estimates of the asymptotic standard deviation. Larger ratios lead to significant underestimation of the standard deviation. A more detailed analysis of this phenomenon in a general setting, however, is beyond the scope of this paper.

 Finally the first two steps of our procedure  in the unknown margin settings are specific to tail-equivalence of transformed margins $X_i^*=r_iX_i^{\alpha_i}$, $1\le i\le d$. A one-step procedure based on rank transforms similar to  the one of \cite{einmahl-dehaan-piterbarg-2001} could be investigated. However the ratio-type statistics involved in moment estimators have not yet been proven to relate well with rank transforms.

\section*{Acknowledgements}
The authors are grateful to three anonymous referees and an associate editor for their valuable comments and suggestions.
The work of MO was partially supported by Deutsche Forschungsgemeinschaft (DFG, German Research Foundation) under Germany's Excellence Strategy -- EXC 2075 -- 390740016. OW acknowledges the support of the French Agence Nationale de la Recherche (ANR) under reference ANR20-CE40-0025-01 (T-REX project).

\bibliographystyle{abbrvnat}
\bibliography{lit}

\begin{thebibliography}{25}
\providecommand{\natexlab}[1]{#1}
\providecommand{\url}[1]{\texttt{#1}}
\expandafter\ifx\csname urlstyle\endcsname\relax
  \providecommand{\doi}[1]{doi: #1}\else
  \providecommand{\doi}{doi: \begingroup \urlstyle{rm}\Url}\fi

\bibitem[Basrak et~al.(2002)Basrak, Davis, and
  Mikosch]{basrak-davis-mikosch-2002}
B.~Basrak, R.~A. Davis, and T.~Mikosch.
\newblock A characterization of multivariate regular variation.
\newblock \emph{Ann. Appl. Probab.}, 12\penalty0 (3):\penalty0 908--920, 2002.

\bibitem[Bernard et~al.(2013)Bernard, Naveau, Vrac, and
  Mestre]{bernard-etal-2013}
E.~Bernard, P.~Naveau, M.~Vrac, and O.~Mestre.
\newblock Clustering of maxima: Spatial dependencies among heavy rainfall in
  france.
\newblock \emph{Journal of Climate}, 26\penalty0 (20):\penalty0 7929--7937,
  2013.

\bibitem[Boman and Lindskog(2009)]{boman-lindskog-2009}
J.~Boman and F.~Lindskog.
\newblock Support theorems for the {R}adon transform and {C}ram{\'e}r--{W}old
  theorems.
\newblock \emph{J. Theor. Probab.}, 22\penalty0 (3):\penalty0 683--710, 2009.

\bibitem[Buritic{\'a} and Naveau(2021)]{buritica2021stable}
G.~Buritic{\'a} and P.~Naveau.
\newblock Stable sums to infer high return levels of multivariate rainfall time
  series.
\newblock \emph{arXiv preprint arXiv:2112.02878}, 2021.

\bibitem[Cap{\'e}ra{\`a} et~al.(1997)Cap{\'e}ra{\`a}, Foug{\`e}res, and
  Genest]{caperaa-fougeres-genest-1997}
P.~Cap{\'e}ra{\`a}, A.-L. Foug{\`e}res, and C.~Genest.
\newblock A nonparametric estimation procedure for bivariate extreme value
  copulas.
\newblock \emph{Biometrika}, 84\penalty0 (3):\penalty0 567--577, 1997.

\bibitem[Cooley and Thibaud(2019)]{cooley2019decompositions}
D.~Cooley and E.~Thibaud.
\newblock Decompositions of dependence for high-dimensional extremes.
\newblock \emph{Biometrika}, 106\penalty0 (3):\penalty0 587--604, 2019.

\bibitem[Cooley et~al.(2006)Cooley, Naveau, and Poncet]{cooley-naveau-2006}
D.~Cooley, P.~Naveau, and P.~Poncet.
\newblock Variograms for spatial max-stable random fields.
\newblock In \emph{Dependence in probability and statistics}, pages 373--390.
  Springer, 2006.

\bibitem[De~Haan and Ferreira(2006)]{dehaan-ferreira-2006}
L.~De~Haan and A.~Ferreira.
\newblock \emph{Extreme Value Theory: An Introduction}.
\newblock Springer, New York, 2006.

\bibitem[Drees and Huang(1998)]{drees1998best}
H.~Drees and X.~Huang.
\newblock Best attainable rates of convergence for estimators of the stable
  tail dependence function.
\newblock \emph{Journal of Multivariate Analysis}, 64\penalty0 (1):\penalty0
  25--46, 1998.

\bibitem[Drees and Sabourin(2019)]{drees2019principal}
H.~Drees and A.~Sabourin.
\newblock Principal component analysis for multivariate extremes.
\newblock \emph{arXiv preprint arXiv:1906.11043}, 2019.

\bibitem[Einmahl et~al.(1993)Einmahl, De~Haan, and
  Huang]{einmahl-dehaan-huang-1993}
J.~H. Einmahl, L.~De~Haan, and X.~Huang.
\newblock Estimating a multidimensional extreme-value distribution.
\newblock \emph{Journal of Multivariate Analysis}, 47\penalty0 (1):\penalty0
  35--47, 1993.

\bibitem[Einmahl et~al.(1997)Einmahl, de~Haan, and
  Sinha]{einmahl-dehaan-sinha-1997}
J.~H. Einmahl, L.~de~Haan, and A.~K. Sinha.
\newblock Estimating the spectral measure of an extreme value distribution.
\newblock \emph{Stochastic Processes and their Applications}, 70\penalty0
  (2):\penalty0 143--171, 1997.

\bibitem[Einmahl et~al.(2001)Einmahl, De~Haan, and
  Piterbarg]{einmahl-dehaan-piterbarg-2001}
J.~H. Einmahl, L.~De~Haan, and V.~I. Piterbarg.
\newblock Nonparametric estimation of the spectral measure of an extreme value
  distribution.
\newblock \emph{Annals of Statistics}, pages 1401--1423, 2001.

\bibitem[Einmahl et~al.(2012)Einmahl, Krajina, and
  Segers]{einmahl-krajina-segers-2012}
J.~H.~J. Einmahl, A.~Krajina, and J.~Segers.
\newblock An {M}-estimator for tail dependence in arbitrary dimensions.
\newblock \emph{Ann. Statist.}, 40\penalty0 (3):\penalty0 1764--1793, 2012.

\bibitem[Genest and Segers(2009)]{genest-segers-2009}
C.~Genest and J.~Segers.
\newblock Rank-based inference for bivariate extreme-value copulas.
\newblock \emph{The Annals of Statistics}, 37\penalty0 (5B):\penalty0
  2990--3022, 2009.

\bibitem[H{\"u}sler and Reiss(1989)]{huesler-reiss-1989}
J.~H{\"u}sler and R.-D. Reiss.
\newblock Maxima of normal random vectors: between independence and complete
  dependence.
\newblock \emph{Statistics \& Probability Letters}, 7\penalty0 (4):\penalty0
  283--286, 1989.

\bibitem[Kl{\"u}ppelberg and
  Pergamenchtchikov(2007)]{klueppelberg-pergamenchtchikov-2007}
C.~Kl{\"u}ppelberg and S.~Pergamenchtchikov.
\newblock Extremal behaviour of models with multivariate random recurrence
  representation.
\newblock \emph{Stoch. Proc. Appl.}, 117\penalty0 (4):\penalty0 432--456, 2007.

\bibitem[Mainik and R{\"u}schendorf(2010)]{mainik-rueschendorf-2010}
G.~Mainik and L.~R{\"u}schendorf.
\newblock On optimal portfolio diversification with respect to extreme risks.
\newblock \emph{Finance and Stochastics}, 14\penalty0 (4):\penalty0 593--623,
  2010.

\bibitem[Pickands(1975)]{pickands1975statistical}
J.~Pickands, III.
\newblock Statistical inference using extreme order statistics.
\newblock \emph{the Annals of Statistics}, 3\penalty0 (1):\penalty0 119--131,
  1975.

\bibitem[Politis and Romano(1994)]{politis1994large}
D.~N. Politis and J.~P. Romano.
\newblock Large sample confidence regions based on subsamples under minimal
  assumptions.
\newblock \emph{The Annals of Statistics}, pages 2031--2050, 1994.

\bibitem[Resnick(2007)]{resnick-2007}
S.~I. Resnick.
\newblock \emph{Heavy-Tail Phenomena: Probabilistic and Statistical Modeling}.
\newblock Springer, New York, 2007.

\bibitem[Schlather and Tawn(2003)]{schlather-tawn-2003}
M.~Schlather and J.~A. Tawn.
\newblock A dependence measure for multivariate and spatial extreme values:
  Properties and inference.
\newblock \emph{Biometrika}, 90\penalty0 (1):\penalty0 139--156, 2003.

\bibitem[Smith(1990)]{smith90}
R.~L. Smith.
\newblock Max--stable processes and spatial extremes.
\newblock Unpublished manuscript, 1990.

\bibitem[Stupfler(2019)]{stupfler-2019}
G.~Stupfler.
\newblock On a relationship between randomly and non-randomly thresholded
  empirical average excesses for heavy tails.
\newblock \emph{Extremes}, 22\penalty0 (4):\penalty0 749--769, 2019.

\bibitem[Van~der Vaart and Wellner(1996)]{vdv-wellner-1996}
A.~W. Van~der Vaart and J.~A. Wellner.
\newblock \emph{Weak Convergence and Empirical Processes}.
\newblock Springer, New York, 1996.

\end{thebibliography}

\appendix
	
\section{Proof of Theorem  \ref{thm:fclt}}\label{app:th5}

 In order to prove functional convergence, we apply the functional Central Limit Theorem 2.11.9 in \citet{vdv-wellner-1996}. Let us first check the convergence of the covariances.\\
	
	For $\bm v \in \partial B_1^+(\bm 0)$, $(\bm s,\bm \beta) \in A_\delta'$ and
	$p \in K_0$, define the function
	\begin{align*}
	f_{\bm v, \bm s, \bm \beta, p}: \ (0,\infty)^d \to [0,\infty), \quad
	f_{\bm v, \bm s, \bm \beta, p}(\bm x) 
	={} \left( \bm v^\top \left(\frac{\bm s \circ \bm x}{\|\bm s \circ \bm x\|}\right)^{1/\bm \beta} \right)^p 
	\mathds 1\{\|\bm s \circ \bm x\| > 1\}.
	\end{align*}
	Define the function space
	$\mathcal F = \{f_{\bm v, \bm s, \bm \beta, p}: \ \bm v \in \partial B_1^+(\bm 0), \ (\bm s, \bm \beta) \in A_\delta', \ p \in K_0\}$ 
	equipped with the semimetric 
	$\rho(f_{\bm v, \bm s, \bm \beta, p_1}, f_{\bm w, \bm t, \bm \gamma, p_2}) 
	= \max\{\|\bm v - \bm w\|, \|\bm s - \bm t\|,
	\|\bm \beta - \bm \gamma\|, |p_1 - p_2|\}$.
	Furthermore, for $l=1,\ldots,n$, $n \in \NN$, let
	$$ Z_{nl}(f) = \sqrt{\frac{a^*(u_n)}{n}} f( u_n^{-1} \bm X^*_l), \quad f \in \mathcal F.$$
	Then, each element of the sequence $(\{G_n(\bm v, \bm s, \bm \beta, p); 
	\, \bm v \in \partial B_1^+(\bm 0), \, (\bm s, \bm \beta) \in A_\delta', \, p \in K_0\})_{n \in \NN}$ can be rewritten as
	$$ \left\{ G_n(\bm v, \bm s, \bm \beta, p)\right\}_{ \bm v \in \partial B_1^+(\bm 0), \, (\bm s,\bm \beta) \in A_\delta', \, p \in K}
	= \left\{ \sum\nolimits_{l=1}^n ( Z_{nl}(f) - \EE Z_{nl}(f)) \right\}_{f \in \mathcal{F}}\,,\qquad n\in\NN\,.$$
	Analogously to the results from Prop.~\ref{prop:limits-ell}, we can calculate
	\begin{align*}
	& \Cov\left( \sum\nolimits_{l=1}^n Z_{nl}(f_{\bm v, \bm s, \bm \beta, p_1}),
	\sum\nolimits_{l=1}^n Z_{nl}(f_{\bm w, \bm t, \bm \gamma, p_2})\right) \\
	={}& a^*(u_n) \EE \left[f_{\bm v, \bm s,\bm \beta, p_1}(u_n^{-1} \bm X^*) 
	f_{\bm w, \bm t,\bm \gamma, p_2}(u_n^{-1} \bm X^*)\right] \\ 
	& - a^*(u_n) \EE[f_{\bm v, \bm s, \bm \beta, p_1}(u_n^{-1} \bm X^*)] 
	             \EE[f_{\bm w, \bm t, \bm \gamma, p_2}(u_n^{-1} \bm X^*)] \\
	={}& \EE \left[ \left( \bm v^\top \left(\frac{\bm s \circ \bm X^*}
	{\|\bm s \circ \bm X^*\|} \right)^{1/\bm \beta}\right)^{p_1} 
	\left(\bm w^\top \left(\frac{\bm t \circ \bm X^*}
	{\|\bm t \circ \bm X^*\|}\right)^{1/\bm \gamma} \right)^{p_2} \, \Big| \, 
	\|\bm s \circ \bm X^*\|  \wedge 
	\|\bm t \circ \bm X^*\| > u_n \right]\\
	&\cdot a^*(u_n) \PP\left(\|\bm s \circ \bm X^*\|  \wedge 
	\|\bm t \circ \bm X^*\|  > u_n\right) \\
	& - \frac 1 {\sqrt{a^*(u_n)}} 
	\EE \left[ \left( \bm v^\top \left(\frac{\bm s \circ \bm X^*}
	{\|\bm s \circ \bm X^*\|}\right)^{1/\bm \beta}\right)^{p_1} 
	\, \Big| \, \|\bm s \circ \bm X^*\|   > u_n \right) \cdot
	a^*(u_n) \PP\left(\|\bm s \circ \bm X^*\| > u_n \right) \\
	& \quad \cdot \frac 1 {\sqrt{a^*(u_n)}} 
	\EE \left[ \left( \bm w^\top \left(\frac{\bm t \circ \bm X^*}
	{\|\bm t \circ \bm X^*\|}\right)^{1/\bm \gamma}\right)^{p_2}
	\, \Big| \, \|\bm t \circ \bm X^*\|   > u_n \right) \cdot 
	a^*(u_n)  \PP\left(\|\bm t \circ \bm X^*\|   > u_n \right) \displaybreak[0]\,.
	\end{align*}	
	This sequence of covariances converges to, as $n\to \infty$,
	\begin{align*}& \frac{\EE\left[ 
		\left( \bm v^\top \left(\frac{\bm s \circ \bm \Theta}
		{\|\bm s \circ \bm \Theta\|}\right)^{1/\bm \beta} \right)^{p_1}
		\left( \bm w^\top \left(\frac{\bm t \circ \bm \Theta}
		{\|\bm t \circ \bm \Theta\|}\right)^{1/\bm \gamma} \right)^{p_2}                    
		\left(\|\bm s \circ \bm \Theta\|  \wedge
		\|\bm t \circ \bm\Theta\| \right)  \right]}
	{\EE \left[ \|\bm s \circ \bm \Theta\|  \wedge 
		\|\bm t \circ \bm \Theta\|  \right]} 
	 \cdot
	\tau \cdot \EE \left[ \|\bm s \circ \bm\Theta\|
	\wedge \|\bm t \circ \bm\Theta\|  \right] \\
	&={} \tau \cdot \EE\left[ 
	\left( \bm v^\top \left(\frac{\bm s \circ \bm \Theta}
	{\|\bm s \circ \bm \Theta\|}\right)^{1/\bm \beta} \right)^{p_1}
	\left( \bm w^\top \left(\frac{\bm t \circ \bm \Theta}
	{\|\bm t \circ \bm \Theta\|}\right)^{1/\bm \gamma} \right)^{p_2}                    
	\left(\|\bm s \circ \bm \Theta\| \wedge \|\bm t \circ \bm\Theta\|\right)  \right].
	\end{align*}
	
	Furthermore, as $\sup_{\bm x \in (0,\infty)^d} |f(\bm x)| \leq 1$ for all 
	$f \in \mathcal{F}$, we have that $$\|Z_{nl}\|_{\mathcal{F}}
	= \sup_{f \in \mathcal{F}} |Z_{nl}(f)| \leq \sqrt{\frac{a^*(u_n)}{n}} \ \text{a.s.} $$ 
	for all $l=1,\ldots,n$ and $n \in \NN$.\\
	
	Consequently, we check the Lindeberg condition: for any $k \in \NN$, we have
	\begin{align}
	\lim_{n \to \infty} \sum_{i=1}^n \EE[\|Z_{ni}\|_{\mathcal F}^k  
	\mathds 1\{ \|Z_{ni}\|_{\mathcal F} > \eta\}] 
	\leq{}& \lim_{n \to \infty} n \left(\frac{a^*(u_n)}{n}\right)^{k/2}  \mathds 1\{\sqrt{a^*(u_n)/n} > \eta\}  = 0, \label{eq:lindeberg}
	\end{align}
	where we used that, for any $\eta>0$, eventually $a^*(u_n)/n < \eta^2$ since $\lim_{n \to \infty} n / a^*(u_n) = \infty$. Making use of \eqref{eq:lindeberg} for $k=2$ one can easily see that, for all $a_i, \ldots, a_m \in \RR$, $f_1,\ldots,f_m \in \mathcal{F}$ and $m \in \NN$, the triangular arrays  $(\sum_{i=1}^m a_i Z_{nl}(f_i))_{1\le l\le n,n \in \NN}$ satisfy a Lindeberg condition and, therefore, $G_n$ converges to $G$ in terms of finite-dimensional distributions. Setting $k=1$ in \eqref{eq:lindeberg} gives the Lindeberg type condition of Theorem 2.11.9 in \citet{vdv-wellner-1996}.\\ 
	
	We further check the equi-continuity condition, i.e.\ the second condition of that theorem. To this end, we consider two elements 
	$f_{\bm v^{(1)}, \bm s^{(1)}, \bm \beta, p} \in \mathcal F$ and 
	$f_{\bm v^{(2)}, \bm s^{(2)}, \bm \gamma, p} \in \mathcal F$ with distance
	$\max\{\|\bm v^{(1)} - \bm v^{(2)}\|, \|\bm s^{(1)} - \bm s^{(2)}\|, \|\bm \beta - 
	\bm \gamma|\} < \eta$ for some  $0<\eta<\delta$. Notice that for sufficiently small $\eta$, we have that $\bm s^{(1)}$ and $\bm s^{(2)}$ necessarily belong to $A_{\delta,I}$ for the same non-empty index set $I$, i.e.\ that the same components of both vectors are different from zero. Moreover we assume with no loss of generality that there exists some $\bm y\in A_{\delta,I}$ satisfying $s_i^{(1)},s_i^{(2)}\in [y_i, y_i+\eta]$ for all $i\in I$.
	
	We distinguish three cases to obtain an upper bound for 
	$|  f_{\bm v^{(1)}, \bm s^{(1)},\bm \beta^{(1)}, p}(\bm x) 
	  - f_{\bm v^{(2)}, \bm s^{(2)},\bm \beta^{(2)}, p}(\bm x) |$:
	\begin{enumerate}
		\item $\| \bm s^{(1)} \circ \bm x\| \leq  1$ and 
		$\| \bm s^{(2)} \circ \bm x\| \leq  1$:\\
		In this case, we have
		$$|  f_{\bm v^{(1)}, \bm s^{(1)}, \bm \beta^{(1)}, p}(\bm x) 
		   - f_{\bm v^{(2)}, \bm s^{(2)}, \bm \beta^{(2)}, p}(\bm x) | = |0 - 0| = 0.$$
		\item  $\| \bm s^{(1)} \circ \bm x\| \geq  1$ and 
		$\| \bm s^{(2)} \circ \bm x\| \geq  1$:\\
		In this case, both the indicator functions in 
		$f_{\bm v^{(1)}, \bm s^{(1)}, \bm \beta^{(1)}, p}(\bm x)$ and in 
		$f_{\bm v^{(2)}, \bm s^{(2)}, \bm \beta^{(2)}, p}(\bm x) $ are equal to $1$. 
		The fact that $b^p - a^p = \int_a^b p x^{p-1} \, \mathrm{d}x \leq p (b-a)$ for $0 \leq a \leq b \leq 1$ and $p \in \NN_0$, leads to the bound
		\begin{align*}
		| f_{\bm v^{(1)}, \bm s^{(1)}, \bm \beta^{(1)}, p}(\bm x) 
		- f_{\bm v^{(2)}, \bm s^{(2)}, \bm \beta^{(2)}, p}(\bm x) |
		\leq{}& p \cdot | f_{\bm v^{(1)}, \bm s^{(1)}, \bm \beta^{(1)}, 1}(\bm x) 
		                - f_{\bm v^{(2)}, \bm s^{(2)}, \bm \beta^{(2)}, 1}(\bm x) |.
		\end{align*}
		Thus, with $\bm v = \bm v^{(1)} \wedge \bm v^{(2)}$, 
		$\bm s = \bm s^{(1)} \wedge \bm s^{(2)}$ and $ \bm \beta = \bm \beta^{(1)} \wedge \bm \beta^{(2)}$ denoting componentwise minima and the fact that $ \bm s + \eta \bm 1_I \leq (1 + (1+\delta) \eta) \bm s$ for 
		$\bm s \in A_{\delta,I}$, we obtain
		\begin{align*}
		& | f_{\bm v^{(1)}, \bm s^{(1)}, \bm \beta^{(1)}, p}(\bm x)
		  - f_{\bm v^{(2)}, \bm s^{(2)}, \bm \beta^{(2)}, p}(\bm x) | \nonumber \\
		\leq{}& 	p (\bm v + \eta \bm 1_I)^\top \left[
		\left(\frac{(\bm s + \eta \bm 1_I) \circ \bm x}{\|\bm s \circ \bm x\|}\right)^{1/(\bm \beta + \eta \bm 1)}
		\vee \left(\frac{(\bm s + \eta \bm 1_I) \circ \bm x}{\|\bm s \circ \bm x\|}\right)^{1/\bm \beta}\right] \nonumber \\
		& \qquad \qquad -  p \bm v^\top \left[\left(\frac{\bm s \circ \bm x}{\|(\bm s + \eta \bm 1_I) \circ \bm x\| }\right)^{1/\bm \beta} \wedge \left(\frac{\bm s \circ \bm x}{\|(\bm s + \eta \bm 1_I) \circ \bm x\| }\right)^{1/(\bm \beta + \eta \bm 1)} \right] \nonumber \\
		\leq{}&  p (\bm v + \eta \bm 1_I)^\top \left[
		\left(\frac{\bm s \circ \bm x}{\|\bm s \circ \bm x\|} (1 + (1+\delta) \eta) \right)^{1/(\bm \beta + \eta \bm 1)}
		\vee \left(\frac{\bm s \circ \bm x}{\|\bm s \circ \bm x\|} (1 + (1+\delta) \eta) \right)^{1/\bm \beta}\right] \nonumber \\
		& \qquad \qquad - p \bm v^\top \left[\left(\frac{\bm s \circ \bm x}{\|\bm s \circ \bm x\|}
		\frac 1 {1 + (1+\delta) \eta} \right)^{1/\bm \beta} \wedge \left(\frac{\bm s \circ \bm x}{\|\bm s \circ \bm x\|} \frac{1}{1 + (1+\delta) \eta}\right)^{1/(\bm \beta + \eta \bm 1)} \right] \displaybreak[0] \\  
		\leq{}& p (\bm v + \eta \bm 1)^\top 
		\left(\frac{\bm s \circ \bm x}{\|\bm s \circ \bm x\|}\right)^{1/(\bm \beta + \eta \bm 1)} (1+(1+\delta)\eta )^{1+\delta}
		- p \bm v^\top \left(\frac{\bm s \circ \bm x}{\|\bm s \circ \bm x\| }\right)^{1/\bm \beta} \frac{1}{(1+(1+\delta)\eta)^{1+\delta}} \displaybreak[0] \\
		\leq{}& p \eta \bm 1^\top \bm 1_I (1+(1+\delta)\eta)^{1+\delta} \\
		& + p \bm v^\top \bm 1_I  \max_{i \in I} \left(  \left(\frac{s_i x_i}{\|\bm s \circ \bm x\|}\right)^{1/(\bm \beta + \eta \bm 1)}   (1+(1+\delta)\eta)^{1+\delta} - 
		\left(\frac{s_i x_i}{\|\bm s \circ \bm x\|}\right)^{1/\bm \beta} \frac{1}{(1+(1+\delta)\eta)^{1+\delta}}\right) \displaybreak[0] \\
		\leq{}& p \eta d (1+(1+\delta)\eta)^{1+\delta} + p \max_{i \in I} \left(\frac{s_i x_i}{\|\bm s \circ \bm x\|}\right)^{1/(\bm \beta + \eta \bm 1)} \cdot \left[(1+(1+\delta)\eta)^{1+\delta} - \frac{1}{(1+(1+\delta)\eta)^{1+\delta}} \right] \\
		& + p \max_{i \in I} \left(  \left(\frac{s_i x_i}{\|\bm s \circ \bm x\|}\right)^{1/(\bm \beta + \eta \bm 1)}   - 
		\left(\frac{s_i x_i}{\|\bm s \circ \bm x\|}\right)^{1/\bm \beta} \right) \cdot 
		\frac{1}{(1+(1+\delta)\eta)^{1+\delta}} \\
		\leq{}& p d (1+(1+\delta)\eta)^{1+\delta} \eta  + p \frac{(1+(1+\delta)\eta)^{2+2\delta} - 1}{(1+(1+\delta)\eta)^{1+\delta}} \\
		& + p \max_{i \in I} \left(  \left(\frac{s_i x_i}{\|\bm s \circ \bm x\|}\right)^{1/(\bm \beta + \eta \bm 1)}   - 
		\left(\frac{s_i x_i}{\|\bm s \circ \bm x\|}\right)^{1/\bm \beta} \right) \\	 
		\leq{}& p d (1+(1+\delta)\eta)^{1+\delta}  \eta + p 2 (1+\delta)^2 (1+(1+\delta)\eta)^{\delta} \eta
		+ p (1+\delta)^2 \left[\sup\nolimits_{z \in [0,1]} \log(z) z^{1/(1+\delta)}\right] \eta .
		\end{align*}
		Bounding $p$ by $p_{\max} = \max_{p\in K_0} p$ and noting that  $\sup_{z \in [0,1]} \log(z) z^{1/(1+\delta)} < \infty$,  
		we have an overall bound of the form
		$C \eta$ some constant  $C = C(\delta,p_{\max},d) > 0$ for sufficiently small $\eta$.\\
		This case occurs only if $(1+\delta) \|\bm x\| > 1$ as $ \bm s^{(1)}$ and $\bm  s^{(2)}$ belong to $A_{\delta,I}$.
		\item $\min\{\|\bm s^{(1)} \circ \bm x\|, \|\bm s^{(2)} \circ \bm x\|\} \leq 1 < \max\{\|\bm s^{(1)} \circ \bm x\|, \|\bm s^{(2)} \circ \bm x\|\}
		$:\\	
		In this case one of the functions $f_{\bm v^{(1)}, \bm s^{(1)},\bm \beta^{(1)}, p}(\bm x)$ and $f_{\bm v^{(2)}, \bm s^{(2)},\beta_2, p}(\bm x)$ equals zero, while the other one is bounded by 1. Consequently, noting that this case does not appear if $\bm s^{(1)}= \bm s^{(2)}$, we have
		\begin{align}
		& |f_{\bm v^{(1)}, \bm s^{(1)}, \bm \beta^{(1)}, p}(\bm x) 
		 - f_{\bm v^{(2)}, \bm s^{(2)}, \bm \beta^{(2)}, p}(\bm x)| \nonumber \\
		\leq{}& \mathbf 1 \left\{ \bigcup\nolimits_{i \in I, s_i^{(1)}\ne s_i^{(2)}} 
		\{ u_n / (s_i^{(1)}\vee s_i^{(2)})  \leq x_{i} <  u_n
		/ (s_i^{(1)}\wedge s_i^{(2)}) \}\right\}. \label{eq:case3} 
		\end{align}       
		As, by construction, $y_i\le\min\{s^{(1)}_i, s_i^{(2)}\}\le \max\{s^{(1)}_i, s_i^{(2)}\}\le y_i+\eta$  this case occurs only if 
		$y_i x_i \leq 1 <
		(y_i+\eta) x_i$
		for at least one $i \in I$. 
	\end{enumerate}  
	Combining the three cases, we obtain the following uniform bound on 
	$\|\bm v^{(1)} - \bm v^{(2)}\| < \eta$, $\|\bm \beta^{(1)} - \bm \beta^{(2)}\| < \eta$ and $s_i^{(1)},s_i^{(2)}\in [y_i, y_i+\eta]$, $i\in I$, for  some $\bm y\in A_{\delta,I}$: 
	\begin{align}
	& (Z_{nl} (f_{\bm v^{(1)}, \bm s^{(1)},\bm \beta^{(1)}, p})
	- Z_{nl}(f_{\bm v^{(2)}, \bm s^{(2)},\beta_2, p}))^2 \nonumber\\
	\leq{}&   \frac{a^*(u_n)}{n} C^2\, \eta^2\, \mathbf 1 \left\{  \|\bm X^*_l\| > (1+\delta)^{-1} u_n\} \right\} \nonumber \\ 
	& +   \frac{a^*(u_n)}{n} \cdot \mathbf 1 \left\{ \bigcup\nolimits_{i \in I} 
	\{ u_n (y_i+\eta)^{-1}  \leq X^*_{li} <  u_n
	y_i^{-1} \}\right\} \nonumber \\
	=:{}& Q^{(1)}_{nl}(\eta) 
	+ Q^{(2)}_{nl}(\eta,\bm y)\,.  \label{eq:def-q}  
	\end{align}
	In the following, we will consider the two summands separately.
	From the definition of $Q^{(1)}_{nl}$ and regular variation of the function $a^*$, it directly follows that
	\begin{align*}
	\sum_{l=1}^n \EE\left[Q^{(1)}_{nl}(\eta) \right]
	&{} \leq a^*(u_n) C^2\, \eta^2\,  
	\PP\left(\|\bm X^*\| > (1+\delta)^{-1} u_n \right)\\
	&{} \sim  \, \tau\,  \frac{a^*(u_n)}{a^*((1+\delta)^{-1}u_n)} C^2 \,\eta^2
	\, \sim  \, \tau\, (1+\delta)\, C^2 \,\eta^2 \,,\qquad  n\to \infty. 	        	      
	\end{align*}
	As $\tau\le d$ we obtain  for $n$ sufficiently large
	\begin{align} \label{eq:q1}
	\sum_{l=1}^n  \EE\left[Q^{(1)}_{nl}(\eta) \right]
	\leq{} 2\, d\,(1+\delta) \, C^2\, \eta^2.  	        	      
	\end{align}

	For $Q^{(2)}_{nl} $  we have that, as $n\to \infty$,
	\begin{align*}
	\sum_{l=1}^n \EE\left[ Q^{(2)}_{nl}(\eta,\bm y)\right] 
	\leq{}& a^*(u_n) \sum_{i \in I} \PP\left(u_n  (y_i+\eta)^{-1} \leq X^*_i <  u_n
	y_i^{-1} \right) \\
	\sim{}& \sum_{i \in I} \left( \frac{\PP(X_i^* > u_n  (y_i+\eta)^{-1}   )}
	{\PP(X_i^* > u_n)} - \frac{\PP(X_i^* > u_n  y_i^{-1} )}
	{\PP(X_i^* > u_n)} \right).
	\end{align*}
	with $\bm s^{(1)}, \bm s^{(2)}$ in $A_{\delta,I}$.
	As the function $u \mapsto \PP(X_i^* > u)$ is regular varying with index $-1$ for each $i=1,\ldots,d$, the convergence of $\PP(X_i^* > u_n y) / \PP(X_i^* >u_n) \to y^{-1}$ is uniform in
	$y \in [(1+\delta)^{-1}, 1+\delta]$, i.e.\ there exists a sequence of positive numbers $\{f_n\}_{n \in \NN}$ with $f_n \to 0$ such that
	$$ (1 - f_n) y^{-1} \leq \frac{\PP(X_i^* > u_n y)}{\PP(X_i^* > u_n)}
	\leq (1 + f_n) y^{-1} $$
	for all $n \in \NN$, $i \in \{1,\ldots,d\}$ and $y \in [(1+\delta)^{-1}, 1+\delta]$. Thus, we obtain, as $n\to \infty$, 
	\begin{align*}
	& \sup_{\bm y \in A_{\delta,I}}  \sum_{i \in I} \left( \frac{\PP(X_i^* > u_n  (y_i+\eta)^{-1} )}
	{\PP(X_i^* > u_n)} - \frac{\PP(X_i^* > u_n  y_i^{-1} )}
	{\PP(X_i^* > u_n)} \right)  \\
	{}\leq{}& |I| \cdot (1+f_n) \cdot \eta + 2 f_n \cdot \sup_{\bm y \in A_{\delta,I}} \sum_{i \in I}  y_i 
	\leq{} 2 d \eta +  2 d f_n (1 + \delta)
	\end{align*}
	for sufficiently large $n$.	
	
	Combining the estimates obtained for $Q^{(1)}_{nl} $ and $Q^{(2)}_{nl} $ and denoting $\bm s^{(1)},\bm s^{(2)}\in [\bm y, \bm y+\eta]$ for $\bm s^{(1)},\bm s^{(2)}\in A_{\delta,I}$ satisfying $s_i^{(1)},s_i^{(2)}\in [y_i, y_i+\eta]$ for all $i\in I$, $I$ uniquely determined by $\bm y\in A_{\delta,I}$ for any $\bm y\in A_\delta$, we obtain for $n$ large enough
	\begin{align*}
	\sup_{\bm y \in A_{\delta}} \sum_{l=1}^n &\EE\left[\sup_{\|\bm v^{(1)} - \bm v^{(2)}\| < \eta,(\bm s^{(1)},\bm s^{(2)})\in [\bm y, \bm y+\eta]^2, \|\bm \beta^{(1)} - \bm \beta^{(2)}\| < \eta} 
	  ( Z_{nl}(f_{\bm v^{(1)}, \bm s^{(1)}, \beta^{(1)}, p})
	  - Z_{nl}(f_{\bm v^{(2)}, \bm s^{(2)}, \beta^{(2)}, p}))^2 \right]\\
	&\le 2\, d\left(\,(1+\delta) \, C^2\, \eta^2
	+ \eta + (1+\delta) f_n\right) \,.
	\end{align*}
	Reminding that $f_n \to 0$, we obtain
   	$$ \sup_{\rho(f,g) < \eta_n} \sum_{l=1}^n \EE\left[ (Z_{nl}(f)-Z_{nl}(g))^2 \right] \leq 2\, d\left(\,(1+\delta) \, C^2\, \eta_n^2
   	+ \eta_n + (1+\delta) f_n\right) \to 0 $$
   	as $\eta_n \to 0$.

	Now, we are checking the entropy condition of Theorem 2.11.9 in \citet{vdv-wellner-1996} by constructing and counting $\varepsilon$-brackets for $\varepsilon>0$. 
	To this end, for each $i \in I$ and $n \in \NN$, we consider the conditional distribution of $ u_n / X_i^* \mid X_i^* > (1+\delta)^{-1} u_n$ which, by definition is supported on $(0,1+\delta)$.\\
	
	Putting all the atoms $\{m_j^{ (i,n)}\}$ of mass larger than $\varepsilon^2/(4(1+\delta)d)$ separately into singletons (there are at most $1\le j\le \lceil 4(1+\delta)d/\varepsilon^2\rceil$ of them), the set $[(1+\delta)^{-1}, 1+\delta]\setminus \cup_j \{m_j^{ (i,n)}\}$ can be divided in
	at most $ \lceil 4(1+\delta)d/\varepsilon^2\rceil$ sets $J_j^{(i,n)}$, $ \lceil 4(1+\delta)d/\varepsilon^2\rceil+1 \le j\le 2 \lceil 4(1+\delta)d/\varepsilon^2\rceil$
	such that    
	$$ \PP\left(  \frac{u_n}{X_i^*} \in J_j^{(i,n)}
	\, \bigg| \, X_i^* > (1+\delta)^{-1} u_n\right) \leq \frac{\varepsilon^2}{4(1+\delta)d},$$
	for $\lceil 4(1+\delta)d/\varepsilon^2\rceil+1 \le j\le 2 \lceil 4(1+\delta)d/\varepsilon^2\rceil $.
	Thus, denoting $J^{(i,n)}_j=\{m_j^{(i,n)}\}$ for any $1\le j\le \lceil 4(1+\delta)d/\varepsilon^2\rceil$, the set $A_\delta \subset [0,1+\delta]^{d}$
	can be divided into $2^d\lceil 4 (1 +\delta) d/\varepsilon^2\rceil^d$ sets of the form
	\begin{equation} \label{eq:j-sets}
	J = J^{ (n)}_{j_1,\ldots,j_d} = J^{(1,n)}_{j_1} \times J^{(2,n)}_{j_2} \times \ldots \times J^{(d,n)}_{j_d} , \quad 1\le j_1,\ldots, j_d \le 2 \lceil 4(1+\delta)d/\varepsilon^2\rceil\, .
	\end{equation}
	Choosing $n$ sufficiently large such that 
	$ a^*(u_n) \leq 2(1+\delta) \PP(X_i^* > u_n (1+\delta)^{-1})^{-1}$
	for all $i=1,\ldots,d$, we obtain that
	\begin{align*}
	& \EE\left[ \sup_{\bm s^{(1)}, \bm s^{(2)} \in J}
	a^*(u_n) \mathbf 1 \left\{ \bigcup\nolimits_{i \in I, s_i^{(1)}\ne s_i^{(2)}} 
	\{ u_n / (s_i^{(1)}\vee s_i^{(2)})  \leq X^*_{i} <  u_n
	/ (s_i^{(1)}\wedge s_i^{(2)}) \}\right\} \right] \\
	& \leq 2 (1 + \delta) \sum_{i\in I, j_i> \lceil 4(1+\delta)d/\varepsilon^2\rceil} 
	\PP \left(\frac{u_n}{X_i^*} \in J_{j_i}^{(i,n)} \, \Big| \, X_i^* > (1+\delta)^{-1} u_n \right) \leq \frac{\varepsilon^2}{2}	 
	\end{align*}	    
	Setting $\eta_0 = \varepsilon / (2 C \sqrt{d(1+\delta)})$,
	Eq.~\eqref{eq:case3} and \eqref{eq:q1} imply that 
	\begin{align*}
	& \EE\left[\sup_{\|\bm v^{(1)} - \bm v^{(2)}\| < \eta_0, \|\bm s^{(1)} - \bm s^{(2)}\| < \eta_0, \bm s^{(1)},\bm s^{(2)} \in J,\|\bm \beta^{(1)} - \bm \beta^{(2)}\|<\eta_0} 
	  (Z_{nl}(f_{\bm v^{(1)}, \bm s^{(1)}, \bm \beta^{(1)}, p})
	 - Z_{nl}(f_{\bm v^{(2)}, \bm s^{(2)}, \bm \beta^{(2)}, p}))^2 \right] \\
	& \leq{} \sum_{l=1}^n \EE\left[ Q_{nl}^{(1)}(\eta_0)\right] \\
	& + \EE\left[ \sup_{\bm s^{(1)}, \bm s^{(2)} \in J}
	a^*(u_n) \mathbf 1 \left\{ \bigcup\nolimits_{i \in I,s_i^{(1)}\ne s_i^{(2)}} 
	\{ u_n / (s_i^{(1)}\vee s_i^{(2)})  \leq X^*_{i} <  u_n
	/ (s_i^{(1)}\wedge s_i^{(2)}) \}\right\} \right] \\
	& \leq \frac{\varepsilon^2}{2} + \frac{\varepsilon^2}{2}
	= \varepsilon^2.
	\end{align*}
	Now, we note that $\partial B_1^+(\bm 0) \subset [0,1]^d$ can be covered by $ \lceil 1 / \eta_0 \rceil^d$ hyperrectangles $B_1, B_2,\ldots$ of side length $\eta_0$, while $A_\delta' \subset [0,1+\delta]^{2d}$
	can be covered by at most $ \lceil (1+\delta) / \eta_0 \rceil^{2d}$
	hyperrectangles $C_1, C_2, \ldots$ of the same side length: 
	Combining each set $B_k$ with each intersection of a set $C_j$ with each set $J^{(n)}_{j_1,\ldots,j_d}$ from \eqref{eq:j-sets} extended to $J^{ (n)}_{j_1,\ldots,j_d}\times [(1+\delta)^{-1}, 1+\delta]^d$, we obtain
	at most 
	$$\left\lceil \frac{1}{\eta_0} \right\rceil^{d} \cdot
	\left\lceil \frac{1+\delta}{\eta_0} \right\rceil^{2d} \cdot
	\left\lceil \frac{4(1+\delta)}{\varepsilon^2} \right\rceil^d 
	= \left\lceil \frac{2 C \sqrt{d(1+\delta)}}{\varepsilon} \right\rceil^d  
	\cdot \left\lceil \frac{2 C \sqrt{d(1+\delta)^3}}{\varepsilon} \right\rceil^{2d} 
	\cdot \left\lceil \frac{4(1+\delta)}{\varepsilon^2} \right\rceil^d $$ 
	sets of the type $D^{(n)}_{k,j,j_1,\ldots,j_d} = B_k \times (C_j \cap (J^{(n)}_{j_1,\ldots,j_d} \times [(1+\delta)^{-1},1+\delta]))$ satisfying
	$$
	\EE\left[\sup_{	
			(\bm v^{(1)}, \bm s^{(1)}, \bm \beta^{(1)}, p),
			(\bm v^{(2)}, \bm s^{(2)}, \bm \beta^{(2)}, p) \in D^{(n)}_{k,j,j_1,\ldots,j_d}}
	 (Z_{nl}(f_{\bm v^{(1)}, \bm s^{(1)}, \bm \beta^{(1)}, p})
	- Z_{nl}(f_{\bm v^{(2)}, \bm s^{(2)}, \bm \beta^{(2)}, p}))^2 \right]  \leq \varepsilon^2. 	 
	$$	
	As this number are the same for  each $p \in K_0$, the function space $\mathcal{F}$ can be partitioned into at most
	$$  N_{[\,]}(\varepsilon, \mathcal{F}, L_2) 
	= |K_0|  \left\lceil \frac{2 C \sqrt{d(1+\delta)}}{\varepsilon} \right\rceil^d 
	\cdot \left\lceil \frac{2 C \sqrt{d(1+\delta)^3}}{\varepsilon} \right\rceil^{2d} 
	\cdot \left\lceil \frac{4(1+\delta)}{\varepsilon^2} \right\rceil^d
	\in \mathcal{O}(\varepsilon^{-5d})
	$$
	$\varepsilon$-brackets. Consequently, 
	$$ \int_{0}^{\varepsilon_n} \sqrt{\log  N_{[\,]}(\varepsilon, \mathcal{F}, L_2)} \, \mathrm{d}\varepsilon 
	\to 0 $$
	as $\varepsilon_n \to 0$. Now, weak convergence follows from Theorem 2.11.9 in \citet{vdv-wellner-1996}.

\section{Proof of Cor.~\ref{coro:fclt-ratio}}\label{app:cor6}

	From Thm.~\ref{thm:fclt} and Eq.~\eqref{eq:bias-negligible},
	we deduce that
	\begin{align} \label{eq:unbiased-fclt}
	\Big\{ \sqrt{\frac{n}{a^*(u_n)}} \Big( a^*(u_n) \widehat{M}_{n,u_n}(\bm v, \bm s, \bm \beta, p) -  \tau  \EE\Big[ \Big( \bm v^\top \Big(\frac{\bm s \circ \bm \Theta}{\|\bm s \circ \bm \Theta\|}\Big)^{1/\beta} \Big)^p \|\bm s \circ \bm \Theta\| \Big] \Big) \Big\} \to G
	\end{align}
	weakly in $\ell^\infty(\partial B_1^+(\bm 0) \times A_{\delta}' \times (K \cup \{0\}))$ as $n \to \infty$.
	\medskip
	
	For the case $p=0$ where 
	$\widehat{M}_{n,u_n}(\bm v, \bm s, \bm \beta, 0) = \widehat{P}_{n, u_n}(\bm s)$, 
	we can use the monotonicity of  
	$\widehat{P}_{n, u_n}$ to conclude that, for all $\bm s \in A_{\delta,I}$,
	$$a^*(u_n) \widehat{P}_{n, u_n,\ell}(\bm s) 
	\geq a^*(u_n) \widehat{P}_{n, u_n,\ell}((1+\delta)^{-1} \bm 1_I)
	\to_p \tau \cdot \EE(\|(1+\delta)^{-1} \cdot \bm \Theta\|)
	= \tau_I (1 + \delta)^{-1},$$
	due to Eq.~\eqref{eq:norm-const-subset}.
	Thus, for any $\varepsilon \in (0, \tau_I (1+\delta)^{-1})$, we have
	\begin{align} \label{eq:neglect-lowerbound}
	\lim_{n \to \infty} & \PP( a^*(u_n) \widehat{P}_{n, u_n}(\bm s) > \varepsilon 
	\text{ for all } \bm s \in A_{\delta,I}) \\
	\geq{}& \lim_{n \to \infty} \PP( a^*(u_n) \widehat{P}_{n, u_n} ((1+\delta)^{-1} \bm 1_I) > \varepsilon) = 1. \nonumber	 
	\end{align}
	This can be strengthened to 
	$$
	\lim_{n \to \infty} \PP( a^*(u_n) \widehat{P}_{n, u_n}(\bm s) > \varepsilon 
	\text{ for all } \bm s \in A_{\delta})=1$$
	as they are only finitely many $\emptyset \neq I \subset \{1,\ldots,d\}$.
	This implies that, for $p=0$ in Eq.~\eqref{eq:unbiased-fclt}, the term
	$ a^*(u_n) \widehat{M}_{n,u_n}(\bm v, \bm s, \bm \beta, 0) = 
	a^*(u_n) \widehat{P}_{n, u_n}(\bm s)$ may be replaced by
	the pointwise maximum $\varepsilon \vee \{a^*(u_n) \widehat{P}_{n, u_n}(\bm s)\}$.
	As the map 
	\begin{align*}
	\phi:{} & \ell^\infty(\partial B_1^+(\bm 0) \times A'_{\delta})
	\times \left( \ell^\infty(\partial B_1^+(\bm 0) \times A'_{\delta}) \cap [\varepsilon,\infty)^{\partial B_1^+(\bm 0) \times A'_{\delta}}\right) \to 
	\ell^\infty(\partial B_1^+(\bm 0) \times A'_{\delta}), \\
	& \phi(f,g)( \bm v, \bm s, \bm \beta) 
	  = f(\bm v,\bm s, \bm \beta) / g(\bm v, \bm s, \bm \beta)
	\end{align*} 
	is Hadamard-differentiable \citep[cf.\ Lemma 3.9.25 in][]{vdv-wellner-1996},
	weak convergence of the process
	$$\bigg( \bigg\{ \sqrt{\frac{n}{a(u_n)}} 
	\Big( \frac{a^*(u_n) \widehat{M}_{n, u_n}(\bm v, \bm s, \bm \beta, p)}
	{\varepsilon \vee \{a^*(u_n) \widehat{P}_{n, u_n}(\bm s)\}} - 
	c(\bm v, \bm s, \bm \beta, p) \Big); \bm v \in \partial B_1^+(\bm 0), \,(\bm s, \bm \beta) \in A_\delta', \, p \in K\bigg\}\bigg)_{n\in \NN} $$
	to $\widetilde G$ is obtained by the functional delta method \citep[cf.\ Theorem 3.9.4 in][for instance]{vdv-wellner-1996} with covariance of the form
	\begin{align} \label{eq:covar-tildeg}
	& \Cov(\widetilde G(\bm v, \bm s, \bm \beta, p_1), 
	       \widetilde G(\bm w, \bm t, \bm \gamma, p_2)) \\
	={}& \frac{\EE \big[ 
		\big(\bm v^\top \big(\frac{\bm s \circ \bm\Theta}
		{\|\bm s \circ \bm \Theta\|}\big)^{1/\bm \beta}\big)^{p_1} 
		\big(\bm w^\top \big(\frac{\bm t \circ \bm\Theta}
		{\|\bm t \circ \bm \Theta\|}\big)^{1/\bm \gamma}\big)^{p_2}
		\left(\|\bm s \circ \bm \Theta\| \wedge 
		\|\bm t \circ \bm \Theta\| \right) \big]}
	{\tau \cdot \EE\left[\|\bm s \circ \bm \Theta\| \right] \cdot 
		\EE\left[\|\bm t \circ \bm \Theta\| \right] } \nonumber \\
	&- \frac{\EE \big[ 
		\big(\bm v^\top \big(
		\frac{\bm s \circ \bm \Theta}{\|\bm s \circ \bm \Theta\|}\big)^{1/\bm \beta}\big)^{p_1} 
		\|\bm s \circ \bm \Theta\| \big] 
		\cdot \EE \big[ 
		\big(\bm w^\top \big(
		\frac{\bm t \circ \bm \Theta}{\|\bm t \circ \bm \Theta\|}\big)^{1/\bm \gamma}\big)^{p_2}
		\big(\|\bm s \circ \bm \Theta\| \wedge
		\|\bm t \circ \bm \Theta\| \big) \big]}      
     	{\tau \cdot \left(\EE\left[\|\bm s \circ \bm \Theta\|\right]\right)^2 \cdot 
		\EE\left[\|\bm t \circ \bm \Theta\|\right] } \nonumber \\
	&  - \frac{\EE \big[ 
		\big(\bm v^\top \big(\frac{\bm s \circ \bm\Theta}{\|\bm s \circ \Theta\|}\big)^{1/\bm \beta}\big)^{p_1}
		\big(\|\bm s \circ \bm \Theta\| \wedge 
		\|\bm t \circ \bm \Theta\|\big) \big] \cdot
		\EE \big[ \big(\bm w^\top \big(\frac{\bm t \circ \bm \Theta}
		{\|\bm t \circ \bm \Theta\|}\big)^{1/\bm \gamma}\big)^{p_2} \|\bm t \circ \bm \Theta\| \big]}      
	{\tau \cdot \EE\left[\|\bm s \circ \bm \Theta\|\right] \cdot 
		\left(\EE\left[\|\bm t \circ \bm \Theta\|\right]\right)^2 } \nonumber \\
	& + \frac{\EE \big[ 
		\big(\bm v^\top \big(\frac{\bm s \circ \bm \Theta}
		{\|\bm s \circ \bm \Theta\|}\big)^{1/\bm \beta}\big)^{p_1} 
		\|\bm s \circ \bm \Theta\| \big] 
		\cdot \EE \big[ 
		\big(\bm w^\top \big(\frac{\bm t \circ \bm \Theta}
		{\|\bm t \circ \bm \Theta\|}\big)^{1/\bm \gamma}\big)^{p_2} 
		\|\bm t \circ \bm \Theta\| \big] \cdot
		\EE\left[\|\bm s \circ \bm \Theta\| \wedge 
		\|\bm t \circ \bm \Theta\| \right]}      
	{\tau \cdot \left(\EE\left[\|\bm s \circ \bm \Theta\| \right] \right)^2 \cdot 
		\left(\EE\left[\|\bm t \circ \bm \Theta\| \right] \right)^2 }. \nonumber     
	\end{align} 
	Using again Eq.~\eqref{eq:neglect-lowerbound}, the assertion follows.

\section{Proof of Thm.~\ref{thm:an-hill}}\label{app:th8}

Analogously to Cor.~\ref{coro:fclt-ratio}, 
We first show a functional limit theorem for the estimator $\widehat L_{n,u}(\bm s) / \widehat P_{n,u_n}(\bm s)$ based on normalized observations $\bm X_i^*$.

\begin{proposition} \label{thm:fclt-hill}
	Let the assumptions of Thm.~\ref{thm:fclt} hold and assume that there exists some $\delta > 0$ such that Eq.~\eqref{eq:bias-hill} is satisfied and Eq.~\eqref{eq:bias-negligible} holds for $p=0$. Then, the sequence of processes 
	$(\{\widetilde H_n(\bm s): \, \bm s \in \bigcup_{i=1}^d A_{\delta,\{i\}}\})_{n \in \NN}$
	 with
	$$ \widetilde H_n(\bm s) = \sqrt{\frac{n}{a^*(u_n)}} \left(\frac{\widehat L_{n,u}(\bm s)}{\widehat P_{n,u_n}(\bm s)} - \frac 1 {\alpha_i} \right), \qquad \bm s \in A_{\delta,\{i\}}, \ 1 \leq i \leq d, $$
	converges weakly in $\ell^\infty(A_\delta)$ to a tight centered Gaussian process $\widetilde H$ with covariance
	\begin{equation}\label{eq:covtildeh}
	\Cov(\widetilde H(\bm s), \widetilde H(\bm t)) \\
	=\frac{1}{\tau\alpha_i\alpha_j}\cdot\frac{
		\EE\left[(s_i \Theta_i) \wedge (t_j \Theta_j) \right]}      
	{\EE\left[s_i \Theta_i \right] \cdot \EE\left[t_j \Theta_j\right]}\,,\qquad 
	\bm s \in A_{\delta,\{i\}}, \ \bm t \in A_{\delta,\{j\}}, \	1 \leq i,j \leq d.    
	\end{equation}  
\end{proposition}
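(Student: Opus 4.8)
The plan is to imitate, in order, the proof of Theorem~\ref{thm:fclt} (Appendix~\ref{app:th5}) to obtain a joint functional central limit theorem for the centered processes associated with $a^*(u_n)\widehat L_{n,u_n}(\cdot)$ and $a^*(u_n)\widehat P_{n,u_n}(\cdot)$, and then the proof of Corollary~\ref{coro:fclt-ratio} (Appendix~\ref{app:cor6}) to pass to the ratio. The one genuinely new difficulty is that the integrand defining $\widehat L_{n,u_n}$ is \emph{unbounded}, so the bounded-envelope shortcuts of Appendix~\ref{app:th5} have to be replaced by a tail estimate.

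First I would fix $\delta>0$ and enlarge the index class of Appendix~\ref{app:th5} to
\[
 \mathcal F^L=\{h_{\bm s}:\ \bm s\in A_\delta\}\cup\{f_{\bm v,\bm s,\beta,0}:\ \bm v\in\partial B_1^+(\bm 0),\ (\bm s,\beta)\in A_\delta'\},\qquad
 h_{\bm s}(\bm x)=\tfrac{1}{\alpha}\log\|\bm s\circ\bm x\|\,\bm 1\{\|\bm s\circ\bm x\|>1\},
\]
and put $Z_{nl}(h)=\sqrt{a^*(u_n)/n}\,h(u_n^{-1}\bm X_l^*)$, so that $\sum_l(Z_{nl}(h_{\bm s})-\EE Z_{nl}(h_{\bm s}))$ and $\sum_l(Z_{nl}(f_{\cdot,\bm s,\cdot,0})-\EE Z_{nl}(\cdot))$ equal $\sqrt{n/a^*(u_n)}$ times the centered versions of $a^*(u_n)\widehat L_{n,u_n}(\bm s)$ and $a^*(u_n)\widehat P_{n,u_n}(\bm s)$. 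I would then verify the hypotheses of Theorem~2.11.9 in \citet{vdv-wellner-1996} for $\mathcal F^L$. The crucial replacement estimate is that, by regular variation of $u\mapsto\PP(\|\bm s\circ\bm X^*\|>u)$ with index $-1$ and a Potter-type bound uniform over $\bm s\in A_\delta$, there is a $C>0$ with $a^*(u_n)\PP(\|\bm s\circ\bm X^*\|>e^r u_n)\le Ce^{-r}$ for all $r\ge0$, $\bm s\in A_\delta$ and $n$ large, i.e.\ the conditional tail of $\log\|\bm s\circ\bm X^*/u_n\|$ is uniformly sub-exponential. From this: (i) the covariances of $\sum_l Z_{nl}(\cdot)$ converge to the expected limits by writing $\log y=\int_0^\infty\bm 1\{y>e^r\}\,\mathrm{d}r$ for $y\ge1$, using dominated convergence in $r$ together with the vague convergence \eqref{eq:vagueconv-norm}, and evaluating the resulting polar integrals via \eqref{eq:exp-spec} (the relevant constants being $\EE[\log Y]=1$ and $\Var(\log Y)=1$); (ii) the Lindeberg-type condition holds because $\sum_l\EE[\|Z_{nl}\|_{\mathcal F^L}\bm 1\{\|Z_{nl}\|_{\mathcal F^L}>\eta\}]\lesssim\sqrt{n/a^*(u_n)}\,e^{-c\eta\sqrt{n/a^*(u_n)}}\to0$ since $n/a^*(u_n)\to\infty$; (iii) the bracketing-entropy bound follows by truncation, splitting $h_{\bm s}=h_{\bm s}\bm 1\{\|\bm s\circ\bm x\|\le M\}+h_{\bm s}\bm 1\{\|\bm s\circ\bm x\|>M\}$, the first summand being bounded by $\tfrac{1}{\alpha}\log(M(1+\delta))$ and bracketed exactly as in Appendix~\ref{app:th5} (the discontinuity absorbed by the same partition of $A_\delta$ into the sets $J_{j_1,\ldots,j_d}$ of \eqref{eq:j-sets}), while the $L_2$-size of the second summand tends to $0$ uniformly in $\bm s$ as $M\to\infty$; choosing $M=M_\varepsilon\to\infty$ slowly then yields $N_{[\,]}(\varepsilon,\mathcal F^L,L_2)=\mathcal O(\varepsilon^{-c})$ for some $c$ and hence a convergent entropy integral. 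This delivers a tight centered Gaussian limit $(G^L,G^0)$ in $\ell^\infty(A_\delta)^2$.

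For the ratio I would follow Appendix~\ref{app:cor6}: Equation~\eqref{eq:bias-hill} replaces the centering of $a^*(u_n)\widehat L_{n,u_n}(\bm s)$ by $\tau\EE[\|\bm s\circ\bm\Theta\|]\alpha^{-1}$ and Equation~\eqref{eq:bias-negligible} with $p=0$ replaces that of $a^*(u_n)\widehat P_{n,u_n}(\bm s)$ by $\tau\EE[\|\bm s\circ\bm\Theta\|]$, uniformly in $\bm s\in A_\delta$; since $a^*(u_n)\widehat P_{n,u_n}(\bm s)$ stays bounded away from $0$ uniformly in $\bm s$ with probability tending to $1$ (monotonicity in $\bm s$ and \eqref{eq:neglect-lowerbound}), after truncating the denominator at a small positive level I compose with the Hadamard-differentiable division map (Lemma~3.9.25 in \citet{vdv-wellner-1996}) and invoke the functional delta method, yielding the weak convergence of $\widetilde H_n$ to $\widetilde H$ in $\ell^\infty(A_\delta)$ with $\widetilde H(\bm s)=(\tau\EE[\|\bm s\circ\bm\Theta\|])^{-1}(G^L(\bm s)-\alpha^{-1}G^0(\bm s))$. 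Finally, computing $\Cov(G^L(\bm s),G^L(\bm t))$, $\Cov(G^L(\bm s),G^0(\bm t))$ and $\Cov(G^0(\bm s),G^0(\bm t))$ from the layer-cake identity and the limit $a^*(u_n)\PP(\|\bm s\circ\bm X^*\|>e^r u_n,\ \|\bm t\circ\bm X^*\|>e^{r'}u_n)\to\tau\,\EE[(e^{-r}\|\bm s\circ\bm\Theta\|)\wedge(e^{-r'}\|\bm t\circ\bm\Theta\|)]$, integrating in $r,r'\ge0$, and substituting into $\Cov(\widetilde H(\bm s),\widetilde H(\bm t))$, a short computation shows that all $\log(\|\bm s\circ\bm\Theta\|/\|\bm t\circ\bm\Theta\|)$-terms cancel and only $\tfrac{\tau}{\alpha^2}\EE[\|\bm s\circ\bm\Theta\|\wedge\|\bm t\circ\bm\Theta\|]$ survives, which divided by $\tau^2\EE[\|\bm s\circ\bm\Theta\|]\EE[\|\bm t\circ\bm\Theta\|]$ is exactly \eqref{eq:covtildeh}. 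The hard part will be step~(iii), namely making the truncation quantitative enough to keep $N_{[\,]}(\varepsilon,\mathcal F^L,L_2)$ polynomial in $1/\varepsilon$; steps (i), (ii) and the covariance identity are routine once the uniform exponential-tail estimate is in hand.
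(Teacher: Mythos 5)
Your proposal is correct and follows the same architecture as the paper's proof in Appendix~\ref{app:th8}: a joint functional CLT for the centered numerator and denominator processes via Theorem~2.11.9 of \citet{vdv-wellner-1996}, recentering through \eqref{eq:bias-hill} and \eqref{eq:bias-negligible} with $p=0$, a uniform lower bound on $a^*(u_n)\widehat P_{n,u_n}(\bm s)$ as in \eqref{eq:neglect-lowerbound}, and the Hadamard-differentiable quotient map; your identification of $\widetilde H(\bm s)=(\tau\EE[\|\bm s\circ\bm\Theta\|])^{-1}(G^L(\bm s)-\alpha^{-1}G^0(\bm s))$ and the cancellation of all logarithmic terms in the covariance (using $\EE[\log Y]=1$, $\EE[\log^2Y]=2$) is exactly right and reproduces \eqref{eq:covtildeh}.

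The one place where you genuinely diverge is the step you yourself flag as the hard part: controlling the unbounded envelope of $h_{\bm s}$ in the equicontinuity and bracketing-entropy conditions. You propose a truncation $h_{\bm s}=h_{\bm s}\bm 1\{\|\bm s\circ\bm x\|\le M\}+h_{\bm s}\bm 1\{\|\bm s\circ\bm x\|>M\}$ with $M=M_\varepsilon\to\infty$ calibrated so that the entropy stays polynomial in $1/\varepsilon$. The paper instead avoids truncation entirely: for the equicontinuity it uses the H\"older-type inequality $|\log x-\log y|\le\beta^{-1}|x-y|^\beta$ with $0<\beta<1/2$, which bounds the squared increment by $\beta^{-2}\eta^{2\beta}\|u_n^{-1}\bm X_I^*\|^{2\beta}$ and is then summable because $\EE[Y^{2\beta}]=(1-2\beta)^{-1}<\infty$; the boundary case (one indicator zero, the other one) is handled by noting the log is then at most $\alpha^{-1}\log((1+\delta)^2)$, so the same $Q^{(2)}_{nl}$-type bound as in Theorem~\ref{thm:fclt} applies, and the bracketing construction of Appendix~\ref{app:th5} carries over verbatim. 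The paper's route buys you a direct $\eta^{2\beta}$ modulus of continuity with no $M$-versus-$\varepsilon$ bookkeeping; your truncation route is workable but, as you note, requires the extra quantitative argument to keep $N_{[\,]}(\varepsilon,\mathcal F^L,L_2)$ polynomial. Your Lindeberg estimate matches the paper's (the Potter bound gives a uniform sub-exponential conditional tail for $\log\|\bm s\circ\bm X^*/u_n\|$, and the exponential decay beats the polynomial prefactor since $n/a^*(u_n)\to\infty$), up to an immaterial polynomial factor in your displayed bound.
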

\begin{proof}
	First, we will show that the sequence of processes $(\{H_n(\bm s):\, \bm s \in  \bigcup_{i=1}^d A_{\delta,\{i\}}\})_{n \in \NN}$ with    
	$$ H_n(\bm s) = \sqrt{\frac{n}{a^*(u_n)}} \left(a^*(u_n) \widehat L_{n,u_n}(\bm s) - a^*(u_n) \EE[\widehat L_{n,u_n}(\bm s)]\right) $$
	converges weakly in $\ell^\infty(\bigcup_{i=1}^d A_{\delta,\{i\}})$ to a centered
	Gaussian process $H$.
	Analogously to the proof of Thm.~\ref{thm:fclt}, we define the function space
	$\mathcal{F}' = \{ f_{\bm s}: \, \bm s \in \bigcup_{i=1}^d A_{\delta,\{i\}}\}$ 
	equipped with the semi-metric 
	$\rho'(f_{\bm s^{(1)}}, f_{\bm s^{(2)}}) = \|\bm s^{(1)} - \bm s^{(2)} \|$ where
	$$ f_{\bm s}: (0,\infty)^d \to [0,\infty), \quad
	 f_{\bm s}(\bm x) = \log(\|(\bm s \circ \bm x)^{1/\bm \alpha}\|) 
	 \bm 1 \{ \|\bm s \circ \bm x\|> 1\}. $$
	For $l=1,\ldots,n$, $n \in \NN$, we set
	$$ Z_{nl}(f) = \sqrt{\frac{a^*(u_n)}{n}} f(u_n^{-1} \bm X_l^*)$$
	such that 
	$$ \{H_n(\bm s)\}_{\bm s \in \bigcup_{i=1}^d A_{\delta,\{i\}}} = \left\{ \sum\nolimits_{l=1}^n (Z_{nl}(f) - \EE Z_{nl}(f)) \right\}_{f \in \mathcal{F}'}.$$
	By construction, each $H_n$ is a centered process and it can be shown that its covariance function
	converges pointwise.
	
	Now, using the Potter bound, for every $\varepsilon > 0$, there exists some $C_\varepsilon > 0$ such
	that 
	$$ \frac{\PP(\|\bm X_I^*\| > u_n t)}{\PP(\|\bm X_I^*\| > u_n)} \leq C_\varepsilon t^{-(1-\varepsilon)}, \qquad t \geq 1, $$
	for sufficiently large $n$. Thus, for every $\eta > 0$ and $k \in \{1,2\}$,	we obtain
	\begin{align*}
	& \sum_{l=1}^n \EE[\|Z_{n,l}\|^k_{\mathcal{F}'} \bm 1 \{\|Z_{n,l}\|_{\mathcal{F}'} > \eta\}] \\
	\leq{}& n\left(\frac{a^*(u_n)}{n}\right)^{k/2} \left\| \frac{1}{\bm \alpha} \right\|^k \EE\left[ \log^{k}((1+\delta) \|\bm X_I^*\| / u_n) 
	\mathds 1\{ \log((1+\delta) \|\bm X_I^*\| / u_n) > \sqrt{n/a^*(u_n)} \eta\}\right] \\
	={}& n \left(\frac{a^*(u_n)}{n}\right)^{k/2} \left\| \frac{1}{\bm \alpha} \right\|^k
	\int_0^\infty \PP\left( \log^{k}((1+\delta) \|\bm X_I^*\| / u_n) 
	\mathds 1\{ \log((1+\delta) \|\bm X_I^*\| / u_n) > \sqrt{n/a^*(u_n)} \eta\} > x\right) \, \mathrm{d}x \displaybreak[0]\\
	={}& n \left(\frac{a^*(u_n)}{n}\right)^{k/2} \left\| \frac{1}{\bm \alpha} \right\|^k
	\bigg[ \PP\left(\|\bm X_I^*\| > \frac{u_n}{1+\delta} \exp\left(\sqrt{\frac{n}{a^*(u_n)}} \eta\right)\right) \cdot \left(\sqrt{\frac{n}{a^*(u_n)}} \eta\right)^{k} \\
	& \hspace{4cm} + \int_{(\sqrt{n/a^*(u_n)}\eta)^{k}}^{\infty}
	\PP\left(\|\bm X_I^*\| > \frac{u_n}{1+\delta} \exp(x^{1/k})\right) \, \mathrm{d} x \bigg] \\
	={}& n \left(\frac{n}{a^*(u_n)}\right)^{k/2} \left\| \frac{1}{\bm \alpha} \right\|^k \eta^k \cdot
	\bigg[ \PP\left(\|\bm X_I^*\| > \frac{u_n}{1+\delta} \exp\left(\sqrt{\frac{n}{a^*(u_n)}} \eta\right)\right) \\
	& \hspace{4.5cm} + \int_{1}^{\infty}
	\PP\left(\|\bm X_I^*\| > \frac{u_n}{1+\delta} \exp\left(\sqrt{\frac{n}{a^*(u_n)}} \eta u^{1/k}\right)\right) \, \mathrm{d} u \bigg]\\
	\leq{}& C_\varepsilon n \PP(\|\bm X_I^*\| > u_n) \left(\frac{n}{a^*(u_n)}\right)^{k/2}
	\left\| \frac{1}{\bm \alpha} \right\|^k \frac{\eta^k}{(1+\delta)^{1-\varepsilon}} \\
	& \quad \cdot \left[ \exp\left(-(1-\varepsilon) \eta  \sqrt{\frac{n}{a^*(u_n)}}\right) + \int_1^\infty
	\exp\left(-(1-\varepsilon) \eta \sqrt{\frac{n}{a^*(u_n)}} u^{1/k} \right) \, \mathrm{d} u \right] \\
	\sim{}& C_\varepsilon \tau_I \left(\frac{n}{a^*(u_n)}\right)^{1+k/2} \left\| \frac{1}{\bm \alpha} \right\|^k \frac{\eta^k}{(1+\delta)^{1-\varepsilon}} \cdot  \bigg[ \exp\left(-(1-\varepsilon) \eta  \sqrt{\frac{n}{a^*(u_n)}}\right) \\
	 & \hspace{5.75cm}  + \int_1^\infty
	\exp\left(-(1-\varepsilon) \eta  \sqrt{\frac{n}{a^*(u_n)}} u^{1/k} \right) \, \mathrm{d} u \bigg] \to 0
	\end{align*}
	because of $n/a^*(u_n) \to 0$ as $n \to \infty$. For $k=2$, this results in a Lindeberg condition that guarantees convergence of $H_n$ to a centered Gaussian process $H$ in terms of finite-dimensional distributions. 
	
	Analogously to the proof of Thm.~\ref{thm:fclt}, we verify the conditions from Theorem 2.11.9 in \citet{vdv-wellner-1996} in order to ensure functional convergence. Thus, we consider two functions
	$f_{\bm s^{(1)}}, f_{,\bm s^{(2)}} \in \mathcal{F}'$ such that $\|\bm s^{(1)} - \bm s^{(2)}\| < \eta$
	for some $0 < \eta < \delta$ and $\eta<1$ so that $\bm s^{(1)} \in A_{\delta,\{i\}}$ and $\bm s^{(2)} \in A_{\delta,\{i\}}$ for the same index $i \in \{1,\ldots,d\}$, that is, there exists some $\bm y \in A_{\delta,\{i\}\}}$ such that $s_i^{(1)}, s_i^{(2)} \in [y_i, y_i + \eta]$. Again, denoting $\underline{\bm \alpha}=\min_{1\le i\le d}\alpha_i$, we distinguish three cases:
	\begin{enumerate}
		\item $\| \bm s^{(1)} \circ \bm x\| \leq  1$ and 
		$\| \bm s^{(2)} \circ \bm x\| \leq  1$:\\
		In this case, we have
		$$|  f_{\bm s^{(1)}}(\bm x) - f_{\bm s^{(2)}}(\bm x) | = |0 - 0| = 0.$$
		\item  $\| \bm s^{(1)} \circ \bm x\| \geq  1$ and 
		$\| \bm s^{(2)} \circ \bm x\| \geq  1$:\\
		In this case, both the indicator functions in 
		$f_{\bm s^{(1)}}(\bm x)$ and in $f_{\bm s^{(2)}}(\bm x) $ are equal to $1$. 
		Choosing $0 < \beta < 1/2$ and using that $|\log(x) - \log(y)| < \beta^{-1} |x-y|^\beta$,
		we obtain that
		$$ |  f_{\bm s^{(1)}}(\bm x) - f_{\bm s^{(2)}}(\bm x) | \leq \frac 1{\alpha_i \beta} \eta^\beta x_i^\beta
		\leq \left\|\frac 1 {\bm \alpha}\right\| \cdot \frac 1 \beta \cdot 
		\eta^\beta \cdot \|\bm x\|^\beta.$$
		This case occurs only if $(1+\delta) \|\bm x\| > 1$ as $ \bm s^{(1)}$ and $\bm  s^{(2)}$ belong to $ \bigcup_{i=1}^d A_{\delta,\{i\}} \subset A_\delta$.
		\item $\min\{\|\bm s^{(1)} \circ \bm x\|, \|\bm s^{(2)} \circ \bm x\|\} \leq 1 < \max\{\|\bm s^{(1)} \circ \bm x\|, \|\bm s^{(2)} \circ \bm x\|\}$:\\	
		In this case one of the functions $f_{\bm s^{(1)}}(\bm x)$ and $f_{\bm s^{(2)}}(\bm x)$ equals zero, while the other one is bounded by 1.
		Here, it is important to use that $\|\bm s \circ \bm X\| \leq u_n$ for some $\bm s \in A_{\delta,\{i\}}$
		implies that $\|u_n^{-1} X_i\| \leq 1+\delta$. Thus, we will have $ \max_{j=1,2} f_{\bm s^{(j)}}(\bm x) \leq \alpha_i^{-1} \log((1+\delta)^2)$ 
		in this case, and, thus,
		$$ |f_{\bm s^{(1)}}(\bm x) - f_{\bm s^{(2)}}(\bm x)	|\leq \frac 1 {\alpha_i} \log(1+\delta)  \mathbf 1 \Big\{ 
		 u_n / (s_i^{(1)}\vee s_i^{(2)})  \leq x_{i} <  u_n
		/ (s_i^{(1)}\wedge s_i^{(2)}) \Big\}.$$       
		This case occurs only if 
		$y_i x_i \leq 1 < (y_i+\eta) x_i$.
	\end{enumerate}  
	
	Thus, for each $i \in \{1,\ldots,d\}$ we obtain a uniform bound on $s_i^{(1)}, s_i^{(2)} \in [y_i, y_i + \eta]$, $i \in I$, for some $\bm y \in A_{\delta,\{i\}}$
	\begin{align*}
	(Z_{nl}(f_{\bm s^{(1)}}) - Z_{nl}(f_{\bm s^{(2)}}))^2 &\leq \frac{a^*(u_n)}{n} \left\| \frac 1 {\bm \alpha \beta} \right\|^2 \cdot \|u_n^{-1} \bm X^*\|^{2\beta} \eta^{2\beta} \bm 1 \{\|\bm X_{l}^*\| > (1+\delta)^{-1} u_n\} \\
	& \quad + \frac{a^*(u_n)}{n} \left\| \frac 1 {\bm \alpha} \right\|^2 \log^2(1+\delta) \cdot \bm 1 \left\{u_n(y_i \eta)^{-1} \leq X^*_{li} \leq u_n y_i^{-1}\right\}\\
	& =: \widetilde{Q}^{(1)}_{nl}(\eta) +  \left\| \frac 1 {\bm \alpha} \right\|^2  \log^2(1+\delta) \cdot Q_{nl}^{(2)}(\eta, \bm y)
	\end{align*}
	where $Q_{nl}^{(2)}(\eta, \bm y)$ is as in \eqref{eq:def-q}.
	
	The definition of $\widetilde{Q}^{(1)}_{nl}(\eta)$ and regular variation of the function $a^*$ imply that
	\begin{align*}
	\sum\nolimits_{l=1}^n \EE\left[ \widetilde{Q}^{(1)}_{nl}(\eta)\right] 
	& \leq{} a^*(u_n)  \left\| \frac 1 {\bm \alpha \beta} \right\|^2  \eta^{2\beta} \EE\left( \|u_n^{-1} \bm X^*\|^{2\beta} \bm 1 \{ \|\bm X^*\| > (1+\delta)^{-1} u_n \}\right]\\
	& \sim{} \tau \frac{a^*(u_n)}{a^*((1+\delta)^{-1} u_n)}  \left\| \frac 1 {\bm \alpha \beta} \right\|^2  \eta^{2\beta}
	\EE\left( \|u_n^{-1} \bm X^*\|^{2\beta} \mid \|\bm X^*\| > (1+\delta)^{-1} u_n \right] \\
	& \sim{}  \left\| \frac 1 {\bm \alpha} \right\|^2  \frac{\tau (1+\delta)^{1-2\beta}}{\beta^2} \EE(\|Y \bm \Theta^*\|^{2\beta}) \cdot \eta^{2\beta} 
	\leq  \left\| \frac 1 {\bm \alpha} \right\|^2  \frac{\tau (1+\delta)^{1-2\beta}}{(1-2\beta) \beta^2}\cdot \eta^{2\beta}, \qquad n \to \infty,   
	\end{align*}
	since $\EE(Y^{2\beta}) = 1 / (1 - 2 \beta) < \infty$ for all $\beta < 1/2$. Thus, analogously to  \eqref{eq:q1}, for $n$ 
	sufficiently large, we have that
	$$ \sum_{l=1}^n \EE\left[ \widetilde{Q}^{(1)}_{nl}(\eta)\right] \leq 2 \left\| \frac 1 {\bm \alpha} \right\|^2  \frac{d (1+\delta)^{1-2\beta}}{(1-2\beta) \beta^2}\cdot \eta^{2\beta}$$
	Combining this bound with the bound with the bounds for $\sum_{l=1}^n \EE\left[ Q^{(2)}_{nl}(\eta)\right]$ provided in the proof of Thm.~\ref{thm:fclt}, the equi-continuity condition and the entropy condition of Theorem 2.11.9 in \citet{vdv-wellner-1996} can be verified,
	following the lines of the proof of Thm.~\ref{thm:fclt}. Thus, we obtain weak convergence of
	of $\{H_n(\bm s): \, \bm s \in \bigcup_{i=1}^d A_{\delta,\{i\}} \}$ to $H$ in $\ell^\infty(\bigcup_{i=1}^d A_{\delta,\{i\}})$.
	
	Making use of \eqref{eq:bias-hill}, the bias can be neglected asymptotically, i.e.
	\begin{align*}
	\left\{\sqrt{\frac{n}{a^*(u_n)}} \left( a^*(u_n) \widehat L_{n,u_n}(\bm s) - \frac{s_i}{\alpha_i} \right); \ \bm s \in A_{\delta,\{i\}}, \, i=1,\ldots,d \right\} \to \left\{H(\bm s); \ \bm s \in \bigcup\nolimits_{i=1}^d A_{\delta,\{i\}}\right\}
	\end{align*}
	weakly in $\ell^\infty(A_\delta)$. By Thm.~\ref{thm:fclt}
	and \eqref{eq:bias-negligible} for $p=0$, we also obtain
	\begin{align*}
	\left\{\sqrt{\frac{n}{a^*(u_n)}} \left( a^*(u_n) \widehat P_{n,u_n}(\bm s) - s_i \right); \ \bm s \in A_{\delta,\{i\}}, \, i=1,\ldots,d \right\} \to \left\{G(\bm v, \bm s, \beta, 0); \ \bm s \in \bigcup\nolimits_{i=1}^d A_{\delta,\{i\}} \right\}
	\end{align*}
	weakly  $\ell^\infty(\bigcup_{i=1}^d A_{\delta,\{i\}})$ for any fixed $\bm v \in \partial B_1^+(\bm 0)$ and $\beta \in [(1+\delta)^{-1}, 1+ \delta]$ as the RHS does not depend on these variables.
	Note that both convergences hold jointly. Thus, analogously to the proof of Cor.~\ref{coro:fclt-ratio}, we can apply the
	functional Delta method to obtain the desired convergence
	$$ \left\{ \sqrt{\frac{n}{a(u_n)}} \left(\frac{\widehat L_{n,u}(\bm s)}{\widehat P_{n,u_n}(\bm s)} -\frac{s_i}{\alpha_i}\right); \ \bm s \in \times A_{\delta,\{i\}}, \, i=1,\ldots,d \right\} \to_w \left\{\widetilde H(\bm s); \ \bm s \in \bigcup\nolimits_{i=1}^d A_{\delta,\{i\}}\right\}$$ 
	for some centered Gaussian process $\widetilde H$. The covariance structure of $\widetilde H$ follows from classical computation.
\end{proof}	
	
Using this result, we are ready to prove Thm.~\ref{thm:an-hill}.	

 \begin{proof}[Proof of Thm.~\ref{thm:an-hill}]
	First, we note that 
	\begin{align*}
	\frac{1}{\widehat \alpha_{n,k_n,\{i\}}}
	={}& \frac{n^{-1} \sum_{l=1}^n \log(\|\bm 1_{\{i\}} \circ \bm X_l/\bm X_{k_n:n}\|) \bm 1 \{ \|\bm 1_{\{i\}} \circ \bm X_l / \bm X_{k_n:n} \|>1\}}{\widetilde P_{n,k_n.\{i\}}} \\
	={}& \frac{n^{-1} \sum_{l=1}^n \log(\|\bm 1_{\{i\}} \circ \bm X^*_l/\bm X^*_{k_n:n}\|^{1/\alpha_i}) \bm 1 \{ \|\bm 1_{\{i\}} \circ \bm X^*_l / \bm X^*_{k_n:n} \|>1\}}{\widetilde P_{n,k_n.\{i\}}} \\
	={}& \frac{\widehat L_{n,u_n}(u_n/\bm X_{k_n:n}^* \circ \bm 1_{\{i\}})}
	{\widehat P_{n,u_n}(u_n/\bm X_{k_n:n}^* \circ \bm 1_{\{i\}})},
	\end{align*}
	cf.~Eq.~\eqref{eq:rel-deterministic-random}. Thus, we have to show that
	$$
	(\widetilde H_n(u_n/\bm X_{k_n:n}^* \circ \bm 1_{\{i\}}) )_{1\le i\le d}\to (\widetilde H(\bm 1_{\{i\}}))_{1\le i\le d} =: \widetilde H.
	$$
	To this end, we use the consistency of the tail empirical measure to obtain
	\begin{equation} \label{eq:consistency-orderstats}
	\frac{u_n}{\bm X^*_{k_n:n}} \sim \left(\frac{(F_i^*)^{-1}(1-k_n/n)}{X^*_{k_n:n,i}}\right)_{i=1,\ldots,d} \to_p \mathbf{1}\,,\qquad n\to \infty\,,
	\end{equation}
	cf.\ Equation (4.17) in \citet{resnick-2007}. By Skohorod's representation theorem, without loss of
	generality, we may reformulate the statement in Prop.~\ref{thm:fclt-hill} as
	\begin{equation} \label{eq:as-conv-hn}
	\sup_{\bm s \in \bigcup_{i=1}^d A_{\delta,\{i\}}}
	\left| \widetilde H_n(\bm s) - \widetilde H(\bm s) \right| \to 0
	\qquad \text{a.s.}\,,\qquad n\to\infty.    
	\end{equation}
	Now, we estimate 
	\begin{align} \label{eq:bound-two-summands-h}
	& \left\|(\widetilde H_n(u_n/\bm X^*_{k_n:n} \circ \bm 1_{\{i\}}))_{1\le i\le d}
	-    \widetilde H\right\| \nonumber \\
	\leq{}& \phantom{+}
	\max_{1\le i\le d}\left|
	    \widetilde H_n(u_n/\bm X^*_{k_n:n} \circ \bm 1_{\{i\}})
	-   \widetilde H(u_n/\bm X^*_{k_n:n} \circ \bm 1_{\{i\}}) \right|
	+ \
	\left\|(\widetilde H(u_n/\bm X^*_{k_n:n} \circ \bm 1_{\{i\}}))_{1\le i\le d} - \widetilde H \right\|.
	\end{align}
	Eq.~\eqref{eq:consistency-orderstats} guarantees that, with probability tending to $1$,
	$\{ u_n/\bm X^*_{k_n:n} \circ \bm 1_{\{i\}} \in A_{\delta,\{i\}} \}$, and, in that case, we can bound the
	right-hand side of Eq.~\eqref{eq:bound-two-summands-h} by
	$$ \sup_{\bm s \in \bigcup_{i=1}^d A_{\delta,\{i\}}} 
	\left|\widetilde H_n(\bm s)
	-     \widetilde H(u_n/\bm X^*_{k_n:n} \circ \bm s) \right|
	+ \
	\left\|(\widetilde H(u_n/\bm X^*_{k_n:n} \circ \bm 1_{\{i\}}))_{1\le i\le d} - \widetilde H \right\|. $$
	Here, the first term vanishes asymptotically by Eq.~\eqref{eq:as-conv-hn} and the
	second term tends to zero because of Eq.~\eqref{eq:consistency-orderstats} and the
	uniform continuity of the sample paths of the tight process $\widetilde H$ (cf.~Thm.~\ref{thm:fclt-hill}). Thus, we obtain the desired convergence
	\begin{equation*}
	\left\|(\widetilde H_n(u_n/\bm X^*_{k_n:n} \circ \bm 1_{\{i\}}))_{1\le i\le d}
	-     \widetilde H \right\| \to_p 0.
	\end{equation*}
\end{proof}

\section{Proof of Thm.~\ref{thm:fclt-ratio-rank}}\label{app:th10}

Before showing Thm.~\ref{thm:fclt-ratio-rank}, we prove two auxiliary results.
We start by establishig a first (but biased) version of the functional central limit theorem for the sequence $\{\widetilde{M}_{n,k_n,I}(\bm v,p)/\widetilde{P}_{n,k_n,I}; \ \bm v \in \partial B_1^+(\bm 0) , \, p \in K \}$ of processes.

\begin{proposition} \label{prop:fclt-orderstats}
    Let $\bm X_l$,
	$l \in \NN$, be independent copies of a non-standard regularly varying
	$[0,\infty)^d$-valued random vector $\bm X$ satisfying the assumptions of Cor.~\ref{coro:fclt-ratio} and Thm.~\ref{thm:an-hill} for some $\delta > 0$ and some $K \subset \NN$. Then, for all non-empty $I \subset \{1,\ldots,d\}$, we have that
	\begin{align*}
	\left\{ \sqrt{k_n} \left( \frac{\widetilde{M}_{n,k_n,I}(\bm v,p)}{\widetilde{P}_{n,k_n,I}}
	- c\left(\bm v, \frac{u_n}{\bm X^*_{k_n:n}} \circ \bm 1_I, \frac{\bm \alpha}{\widehat {\bm \alpha}_{n,k_n}}, p\right)\right); \ \bm v \in \partial B_1^+(\bm 0) , \, p \in K \right\} \\
	\qquad \qquad
	\to \left\{\widetilde G(\bm v, \bm 1_I,\bm 1, p); \ \bm v \in \partial B_1^+(\bm 0), \, p \in K\right\}\,,\qquad n\to\infty\,,
	\end{align*}
	weakly in $\ell^\infty(\partial B_1^+(\bm 0) \times K)$	where ${\bm X^*_{k_n:n}}$ denotes the $k_n$-th order statistics of the transformed vector sample $\bm X^*_l$, $1\le l\le n$, and $\widetilde G$ is defined as in
	Cor.~\ref{coro:fclt-ratio}.
\end{proposition}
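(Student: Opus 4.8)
The plan is to obtain this functional central limit theorem from the one for deterministic thresholds, Corollary~\ref{coro:fclt-ratio}, by plugging in the random scaling vector $\bm s_n = (u_n/\bm X^*_{k_n:n}) \circ \bm 1_I$ and the random exponent $\beta_n = \alpha/\widehat\alpha_{n,k_n,I}$ via the identity \eqref{eq:rel-deterministic-random}. As a first step I would note that the two normalizations agree: since $u_n \sim (F_i^*)^{-1}(1-k_n/n)$ and $1-F_i^*$ is regularly varying with index $-1$, we have $a^*(u_n) \sim (1-F_i^*(u_n))^{-1} \sim n/k_n$, hence $\sqrt{n/a^*(u_n)}/\sqrt{k_n} \to 1$; consequently, by Slutsky, it suffices to prove the statement with $\sqrt{k_n}$ replaced by $\sqrt{n/a^*(u_n)}$.

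Next, by Corollary~\ref{coro:fclt-ratio} (whose hypotheses are assumed here) the process $\widetilde G_n(\bm v,\bm s,\beta,p) = \sqrt{n/a^*(u_n)}\big(\widehat M_{n,u_n}(\bm v,\bm s,\beta,p)/\widehat P_{n,u_n}(\bm s) - c(\bm v,\bm s,\beta,p)\big)$ converges weakly in $\ell^\infty(\partial B_1^+(\bm 0)\times A_\delta'\times K)$ to $\widetilde G$, and, by part~3 of Remark~\ref{rem:joint-conv-cont}, for each fixed $p$ the sequence $\widetilde G_n(\cdot,\cdot,\cdot,p)$ is asymptotically uniformly $\rho$-equicontinuous on $\partial B_1^+(\bm 0)\times A_\delta'$ with $\widetilde G(\cdot,\cdot,\cdot,p)$ having $\rho$-uniformly continuous sample paths. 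On the other hand, the consistency of the tail empirical quantiles used in \eqref{eq:consistency-orderstats} yields $\bm s_n \to_p \bm 1_I$, and Theorem~\ref{thm:an-hill} gives $1/\widehat\alpha_{n,k_n,I} \to_p 1/\alpha$, hence $\beta_n \to_p 1$. Since $(\bm 1_I,1)$ is an interior point of $A_\delta'$ for $\delta>0$, we get $\PP((\bm s_n,\beta_n)\in A_\delta')\to 1$.

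Then I would carry out the standard substitution-of-a-random-argument argument. Asymptotic uniform equicontinuity together with finiteness of $K$ implies that for every $\varepsilon,\eta>0$ there is $\delta'>0$ such that $\limsup_n \PP\big(\sup\{|\widetilde G_n(\bm v,\bm s,\beta,p)-\widetilde G_n(\bm v,\bm t,\gamma,p)| : \rho((\bm v,\bm s,\beta),(\bm v,\bm t,\gamma))<\delta',\ \bm v\in\partial B_1^+(\bm 0),\ p\in K\} > \varepsilon\big) < \eta$; combining this with $\PP(\rho((\bm v,\bm s_n,\beta_n),(\bm v,\bm 1_I,1)) \ge \delta') \to 0$ gives $\sup_{\bm v\in\partial B_1^+(\bm 0),\,p\in K}|\widetilde G_n(\bm v,\bm s_n,\beta_n,p)-\widetilde G_n(\bm v,\bm 1_I,1,p)| \to_p 0$. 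Since $\{\widetilde G_n(\bm v,\bm 1_I,1,p);\ \bm v\in\partial B_1^+(\bm 0),\,p\in K\}$ converges weakly to $\{\widetilde G(\bm v,\bm 1_I,1,p)\}$ in $\ell^\infty(\partial B_1^+(\bm 0)\times K)$ by Corollary~\ref{coro:fclt-ratio}, Slutsky's lemma in $\ell^\infty$ yields $\{\widetilde G_n(\bm v,\bm s_n,\beta_n,p)\} \to \{\widetilde G(\bm v,\bm 1_I,1,p)\}$ weakly. Finally, by \eqref{eq:rel-deterministic-random}, $\widehat M_{n,u_n}(\bm v_I,\bm s_n,\beta_n,p)/\widehat P_{n,u_n}(\bm s_n) = \widetilde M_{n,k_n,I}(\bm v,p)/\widetilde P_{n,k_n,I}$ while $c(\bm v,\bm s_n,\beta_n,p) = c\big(\bm v,(u_n/\bm X^*_{k_n:n})\circ\bm 1_I,\alpha/\widehat\alpha_{n,k_n,I},p\big)$ is exactly the centering appearing in the claim, so together with the normalization remark of the first step the proof is complete.

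The main obstacle is precisely the substitution of the random pair $(\bm s_n,\beta_n)$ into the weakly convergent functional process. Three points need care: (i) that $(\bm s_n,\beta_n)$ eventually lies in the domain $A_\delta'$ on which Corollary~\ref{coro:fclt-ratio} applies, which holds because $(\bm 1_I,1)$ is interior; (ii) that the equicontinuity of Remark~\ref{rem:joint-conv-cont}(3) can be taken uniform over $\bm v$ and over the finite set $K$, which it can since it is stated w.r.t.\ $\rho$ on the whole index set; and (iii) that $\widehat\alpha_{n,k_n,I}$ is built from the same sample as $\widetilde M_{n,k_n,I}$ — but this causes no difficulty, since only the convergence \emph{in probability} of $(\bm s_n,\beta_n)$ to the deterministic limit $(\bm 1_I,1)$ is used, so no joint distributional control of these randomizers against $\widetilde G_n$ is needed.
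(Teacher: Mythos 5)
Your proposal is correct and follows essentially the same route as the paper: rewrite the statistic via \eqref{eq:rel-deterministic-random}, establish $(\bm s_n,\beta_n)\to_p(\bm 1_I,1)$ from \eqref{eq:consistency-orderstats} and Theorem~\ref{thm:an-hill}, and then substitute the random argument into the functional limit of Corollary~\ref{coro:fclt-ratio}. The only (harmless) difference is in the last technical step — you use asymptotic uniform $\rho$-equicontinuity of $\widetilde G_n$ from Remark~\ref{rem:joint-conv-cont}(3) plus Slutsky, whereas the paper invokes Skorokhod's representation and the uniform continuity of the sample paths of $\widetilde G$; your explicit justification that $\sqrt{n/a^*(u_n)}/\sqrt{k_n}\to 1$ is in fact slightly more careful than the paper's.
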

\begin{proof}
	By Eq.~\eqref{eq:rel-deterministic-random}, we have that 
	$$ \sqrt{k_n} \left( \frac{\widetilde{M}_{n,k_n,I}(\bm v,p)}{\widetilde{P}_{n,k_n,I}}
	- c\left(\bm v, \frac{u_n}{\bm X^*_{k_n:n}} \circ \bm 1_I, \frac{\bm \alpha}{\widehat {\bm \alpha}_{n,k_n}}, p\right)\right) = \widetilde G_n\left(\bm v, \frac{u_n}{\bm X^*_{k_n:n}} \circ \bm 1_I, \frac{\bm \alpha}{\widehat {\bm \alpha}_{n,k_n}}, p\right), $$
	where $\widetilde G_N$ is defined as in Cor.~\ref{coro:fclt-ratio}.
	Furthermore, Eq.~\eqref{eq:consistency-orderstats} and Thm.~\ref{thm:an-hill} provide
	\begin{equation} \label{eq:consistency-orderstats-alpha}
	\left(\frac{u_n}{\bm X^*_{k_n:n}} \circ \bm 1_I, \frac{\bm \alpha}{\widehat {\bm \alpha}_{n,k_n}}\right)
	\to_p (\bm 1_I, \bm 1), \qquad n \to \infty\,.
	\end{equation}
	Again, Skohorod's representation theorem allows us to rewrite the result of Cor.~\ref{coro:fclt-ratio} as
	\begin{equation} \label{eq:as-conv-gn}
	\sup_{\bm v \in \partial B_1^{+}(\bm 0), (\bm s,\bm \beta) \in A_{\delta}', p \in K}
	\left| \widetilde G_n(\bm v, \bm s,\bm \beta, p)
	- \widetilde G(\bm v, \bm s,\bm \beta, p) \right| \to 0
	\qquad \text{a.s.}\,,\qquad n\to\infty.    
	\end{equation}
	Using Eq.~\eqref{eq:consistency-orderstats-alpha}, with probability tending to $1$, we can bound the right-hand side of
	\begin{align*}
	& \sup_{\bm v \in \partial B_1^{+}(\bm 0), p \in K}
	\left|\widetilde G_n(\bm v, u_n/\bm X^*_{k_n:n} \circ \bm 1_I,\bm \alpha/\widehat {\bm \alpha}_{n,k_n}, p)
	-     \widetilde G(\bm v, \bm 1_I, \bm 1, p) \right| \nonumber \\
	\leq{}& \phantom{+} \sup_{\bm v \in \partial B_1^{+}(\bm 0), p \in K}
	\left|\widetilde G_n(\bm v, u_n/\bm X^*_{k_n:n} \circ \bm 1_I, \bm \alpha/\widehat {\bm \alpha}_{n,k_n}, p)
	-     \widetilde G(\bm v, u_n/\bm X^*_{k_n:n} \circ \bm 1_I, \bm \alpha/\widehat {\bm \alpha}_{n,k_n}, p) \right|
	\nonumber \\
	& + \sup_{\bm v \in \partial B_1^{+}(\bm 0), p \in K}
	\left|\widetilde G(\bm v, u_n/\bm X^*_{k_n:n} \circ \bm 1_I, \bm \alpha/\widehat {\bm \alpha}_{n,k_n} p) 
	-     \widetilde G(\bm v, \bm 1_I, \bm 1, p) \right|.
	\end{align*}
	by
	\begin{align*}
	& \sup_{\bm v \in \partial B_1^{+}(\bm 0), (\bm s,\bm \beta) \in A_{\delta}', p \in K}
	\left|\widetilde G_n(\bm v, \bm s,\bm \beta, p) - \widetilde G(\bm v, \bm s,\bm \beta, p) \right| \\
	& \qquad \qquad + \sup_{\bm v \in \partial B_1^{+}(\bm 0), p \in K}
	\left|\widetilde G(\bm v,  u_n/\bm X^*_{k_n:n} \circ \bm 1_I, \bm \alpha/\widehat {\bm \alpha}_{n,k_n}, p)
	- \widetilde G(\bm v, \bm 1_I, \bm 1, p) \right|.
	\end{align*}
	
	While the first term tends to zero by Eq.~\eqref{eq:as-conv-gn},
	the second term vanishes asymptotically because of Eq.~\eqref{eq:consistency-orderstats-alpha}
	and the uniform continuity of $\widetilde G(\cdot,\cdot,p)$, cf.\ the third part of Rem.~\ref{rem:joint-conv-cont}. Consequently,
	$$\sup_{\bm v \in \partial B_1^{+}(\bm 0), p \in K}
	\left|\widetilde G_n(\bm v, u_n/\bm X^*_{k_n:n} \circ \bm 1_I, \bm \alpha/\widehat {\bm \alpha}_{n,k_n}, p)
	- \widetilde G(\bm v, \bm 1_I, \bm 1, p) \right| \to_p 0\,, \qquad n\to \infty.$$ 
\end{proof}

 Note that the centering term in Prop.~\ref{prop:fclt-orderstats},
$c(\bm v_I, u_n /\bm X^*_{k_n:n} \circ \bm 1_I,\bm \alpha/\widehat {\bm \alpha}_{n,k_n}, p)$ is random and needs 
to be replaced by the desired limit  $c(\bm v_I,\bm 1_I, \bm 1, p)$.
It turns out that, in general, the difference is not negligible, i.e.\
the replacement of the centering term will influence the asymptotic
distribution. The proof will rely on the following lemma.

\begin{lemma} \label{lem:orderstats}
	Let the assumptions of Prop.~\ref{prop:fclt-orderstats} hold
	and assume that there exists some $\delta > 0$ such that
	Eq.~\eqref{eq:one-dim-bias} holds  for all $i \in \{1,\ldots,d\}$. Then we have
	$$ \left\{\sqrt{k_n} \left( \frac{u_n}{X_{\lfloor sk_n\rfloor:n,i}^*} - s\right); 
	\ (1+ \delta)^{-1} \leq s \leq 1 + \delta\right\}
	\to_w \left\{ -G^0(s \bm 1_{\{i\}}) \right\}
	$$
	in $\ell^\infty([(1+\delta)^{-1}, 1+\delta])$.  
\end{lemma}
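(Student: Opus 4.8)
The plan is to write the (inverse of the) scaled order statistic as a generalised inverse of the one‑coordinate tail empirical process and then transport the functional central limit theorem of Theorem~\ref{thm:fclt} through an inverse‑map (Vervaat‑type) argument. Set $Q_n(s) = u_n/X^*_{\lfloor s k_n\rfloor:n,i}$ and, for $y>0$, $P_n(y) = a^*(u_n)\,\widehat P_{n,u_n}(y\bm 1_{\{i\}}) = a^*(u_n)n^{-1}\sum_{l=1}^n \bm 1\{X^*_{li} > u_n/y\}$. The function $y\mapsto P_n(y)$ is non‑decreasing, and counting exceedances gives $\inf\{y:P_n(y)\ge \tfrac{a^*(u_n)}{n}\lfloor s k_n\rfloor\} = u_n/X^*_{\lfloor s k_n\rfloor:n,i}$, i.e.\ $Q_n(s) = P_n^{\leftarrow}(t_n(s))$ with $t_n(s) := \tfrac{a^*(u_n)}{n}\lfloor s k_n\rfloor$. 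Since $a^*(u_n)\PP(X_i^*>u_n)\to 1$ and $n\PP(X_i^*>u_n)/k_n\to 1$ by the choice of $u_n$, we have $a^*(u_n)k_n/n\to 1$; combined with \eqref{eq:one-dim-bias} at $s=1$ this yields $\sup_s\sqrt{k_n}\,|t_n(s)-s|\to 0$ and also shows that $\sqrt{k_n}$ and $\sqrt{n/a^*(u_n)}$ are interchangeable norming sequences.

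First I would record the functional limit of $P_n$. By homogeneity of $\mu^*$ of order $-1$, $a^*(u_n)\EE[\widehat P_{n,u_n}(s\bm 1_{\{i\}})] = a^*(u_n)\PP(sX_i^*>u_n)\to s$, and \eqref{eq:one-dim-bias} makes this convergence $\sqrt{k_n}$‑negligible uniformly over $s\in[(1+\delta)^{-1},1+\delta]$. Applying Theorem~\ref{thm:fclt} with $p=0$ and $\bm s = s\bm 1_{\{i\}}\in A_{\delta,\{i\}}$ — where $G_n(\cdot,\bm s,\cdot,0)$, and hence its limit $G^0(\bm s)$, depends on neither $\bm v$ nor $\beta$ — then gives
$$\bigl\{\sqrt{k_n}(P_n(s)-s);\ s\in[(1+\delta)^{-1},1+\delta]\bigr\}\ \to_w\ \bigl\{G^0(s\bm 1_{\{i\}})\bigr\}$$
in $\ell^\infty([(1+\delta)^{-1},1+\delta])$, with continuous limit sample paths and with $\sqrt{k_n}(P_n(\cdot)-\cdot)$ asymptotically uniformly equicontinuous, by the third part of Remark~\ref{rem:joint-conv-cont} and the estimates in the proof of Theorem~\ref{thm:fclt}. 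Tightness of $\sqrt{k_n}(P_n-\mathrm{id})$ moreover forces $P_n\to\mathrm{id}$ and $Q_n(s)=P_n^{\leftarrow}(t_n(s))\to_p s$ uniformly, so the inverse relations below are eventually valid on a fixed neighbourhood of $[(1+\delta)^{-1},1+\delta]$ with probability tending to one.

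Finally I would invoke Vervaat's lemma. As $P_n$ is non‑decreasing, converges uniformly to the identity, and $\sqrt{k_n}(P_n-\mathrm{id})$ is asymptotically tight, the functional form of Vervaat's lemma (the inverse‑map results of \citet{vdv-wellner-1996}, applied via a Skorokhod representation) gives $\sup_t\bigl|\sqrt{k_n}(P_n^{\leftarrow}(t)-t)+\sqrt{k_n}(P_n(t)-t)\bigr|\to_p 0$ on compact subintervals. Plugging $t=t_n(s)$, using $\sup_s\sqrt{k_n}|t_n(s)-s|\to 0$ together with the asymptotic equicontinuity of $\sqrt{k_n}(P_n-\mathrm{id})$ to replace $t_n(s)$ by $s$ in the two remaining terms, I obtain $\sqrt{k_n}(Q_n(s)-s) = -\sqrt{k_n}(P_n(s)-s)+o_p(1)$ uniformly in $s\in[(1+\delta)^{-1},1+\delta]$, and the claim follows from the weak convergence recorded above together with $\sqrt{k_n}/\sqrt{n/a^*(u_n)}\to 1$. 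The main obstacle is precisely this last step: carrying the argument through the generalised inverse \emph{uniformly} in $s$, in particular controlling the non‑deterministic argument $t_n(s)$ and the jumps of the step function $P_n$, which is where the asymptotic equicontinuity inherited from the proof of Theorem~\ref{thm:fclt} and Vervaat's lemma do the essential work.
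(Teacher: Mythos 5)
Your argument is essentially the paper's own proof: both establish the functional CLT for the one-coordinate tail empirical process from Theorem \ref{thm:fclt} with $p=0$ and $\bm s = s\bm 1_{\{i\}}$, kill the bias with \eqref{eq:one-dim-bias}, pass to almost sure convergence by Skorokhod representation, and invert via Vervaat's lemma. The only difference is bookkeeping: the paper normalizes the exceedance count by $k_n$ (so the generalized inverse is exactly $u_n/X^*_{\lfloor sk_n\rfloor:n,i}$ and no argument shift arises), whereas your normalization by $a^*(u_n)/n$ forces you to carry the random level $t_n(s)$ and justify $\sqrt{k_n}\sup_s|t_n(s)-s|\to 0$ — a step that rests on the same implicit identification of $k_n$ with $n\,\PP(X_i^*>u_n)$ that the paper also uses.
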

\begin{proof}
	Applying Thm.~\ref{thm:fclt} to the case $p=0$ and $\bm s = s \bm 1_{\{i\}}$ and making use of Eq.~\eqref{eq:one-dim-bias}, we obtain that
	$$ \left\{ \sqrt{k_n} \left( \frac 1 {k_n} \sum\nolimits_{l=1}^n \mathbf{1}\{s X^*_{li} > u_n\} - s\right); \ (1+ \delta)^{-1} \leq s \leq 1 + \delta \right\} \to_w
	\left\{G^0(s \bm 1_{\{i\}}) \right\} $$
	in $\ell^\infty([(1+\delta)^{-1}, 1+\delta])$   as $n\to\infty$. Using the notation $\widehat F_{n,i}$ for the empirical cumulative distribution function of the sample $X^*_{1i},\ldots,X^*_{ni}$ and Skohorod's representation theorem, we may assume that
	$$ \sup_{(1+\delta)^{-1} \leq s \leq 1 + \delta} 
	\left| \sqrt{k_n} \left( \frac{n}{k_n} (1-\widehat F_{n,i}(u_n/s)) - s\right) - G^0(s \bm 1_{\{i\}})\right| \to 0 \qquad \text{a.s.}\,,\qquad n\to \infty.$$
	Applying Vervaat's Lemma \citep[cf.\ Lemma A.0.2 in][for instance]{dehaan-ferreira-2006} to the nondecreasing functions 
	$x_n(s) = n/k_n (1-\widehat F_{n,i}(u_n/s))$, we obtain   
	$$ \sup_{(1+\delta)^{-1} \leq s \leq 1 + \delta} 
	\left| \sqrt{k_n} \left( u_n / X_{\lfloor sk_n\rfloor:n,i}^* - s\right) + G^0(s \bm 1_{\{i\}})\right| \to 0 \qquad \text{a.s.},$$
	where we used that $x_n^\inv(s) = u_n /  X_{\lfloor sk_n\rfloor:n,i}^*$.
\end{proof}	

Imposing some additional conditions on the function $c$ and using our previous results, we can follow the same lines as the proof of Theorem
4.6 in \citet{einmahl-krajina-segers-2012} in order to establish the desired convergence result.

\begin{proof}[Proof of Thm.~\ref{thm:fclt-ratio-rank}]
	By Skohorod's representation theorem, we may assume that the weak convergences
	in Thm.~\ref{thm:an-hill}, Prop.~\ref{prop:fclt-orderstats} and in Lemma \ref{lem:orderstats} take place a.s. Then, the triangular inequality yields the bound
	\begin{align*}
	& \sup_{\bm v \in B_1^+(\bm 0), p \in K} \bigg|
	\sqrt{k} \left( \frac{\widetilde{M}_{n,k_n}(\bm v_I,p)}{\widetilde{P}_{n,k_n}} 
	- c(\bm v_I, \bm 1_I, p)\right) 
	- \widetilde G(\bm v_I,\bm 1_I,\bm 1 ,p) \\
	& \qquad \qquad \qquad
	+ \sum_{i \in I}\left( c_{s_i}(\bm v_I,\bm 1_I,\bm 1,p) \cdot G^0(\bm 1_{\{i\}}) - c_{\beta_i}(\bm v_I,\bm 1_I,\bm 1,p)
	\cdot \bm \alpha_i \cdot \widetilde H_i\right) \bigg| \\
	&{}\leq\sup_{\bm v \in \partial B_1^+(\bm 0), p \in K}
	\left| \widetilde G_n\left(\bm v_I, \frac{u_n}{\bm X^*_{k_n:n}} \circ \bm 1_I, \frac{\bm \alpha}{\widehat {\bm \alpha}_{n,k_n}}, p\right) 
	- \widetilde G(\bm v_I, \bm 1_I, \bm 1, p) \right| \\
	& \quad  +   \sup_{\bm v \in \partial B_1^+(\bm 0), p \in K} \bigg| \sqrt{k} 
	\left(  c\left(\bm v_I, \frac{u_n}{\bm X^*_{k_n:n}} \circ \bm 1_I, \frac{\bm \alpha}{\widehat {\bm \alpha}_{n,k_n}}, p\right)
	- c(\bm v_I, \bm 1_I,\bm 1, p) \right) \\
	& \qquad \qquad \qquad \qquad \qquad
	+ \sum_{i \in I} \left(c_{s_i}(\bm v_I, \bm 1_I, \bm 1, p) \cdot G^0(\bm 1_{\{i\}}) 
	-  c_{\beta_i}(\bm v_I,\bm 1_I,\bm 1,p) \cdot \bm \alpha_i \cdot\widetilde H_i\right)\bigg| \\
	&{}=: D_1 + D_2.    
	\end{align*}
	We will show that $D_1 \to_p 0$ and $D_2 \to_p 0$.
	By Prop.~\ref{prop:fclt-orderstats}, we directly obtain that $D_1 \to 0$ a.s.
	In order to show that $D_2 \to_p 0$, we first
	define
	$$ C_n(\bm v_I, p) 
	:= \sqrt{k} 
	\left(c\left(\bm v_I, \frac{u_n}{\bm X^*_{k:n}} \circ \bm 1_I, 
	\frac{\bm \alpha}{\widehat {\bm \alpha}_{n,k_n}}, p\right) - c(\bm v_I, \bm 1_I,\bm 1, p) \right),    \quad \bm v_I \in \partial B_1^+(\bm 0), \ p \in K. $$
	The mean value theorem yields
	\begin{equation} \label{eq:mvt}
	\frac 1 {\sqrt{k}} C_n(\bm v_I, p) = \sum\nolimits_{i \in I} \Big(\Big( \frac{u_n}{\bm X^*_{k:n,i}} - 1 \Big) \cdot c_{s_i}(\bm v_I, \bm \xi_n, \bm \psi_n, p)
	+ \Big( \frac{\bm \alpha_i}{\widehat {\bm \alpha}_{n,k_n,i}}- 1 \Big) \cdot c_{\beta_i}(\bm v_I, \bm \xi_n, \bm \psi_n, p)\Big)
	\end{equation}
	for some $\bm \xi_n = \bm \xi_n(\bm v_I,p)$ between $u_n / \bm X^*_{k:n} \circ \bm 1_I$ and $\bm  1_I$
	and $\bm \psi_n = \bm \psi_n(\bm v_I, p)$ between $\bm \alpha/\widehat {\bm \alpha}_{n,k_n}$ and $\bm 1$. 
	If $(u_n / \bm X^*_{k:n} \circ \bm 1_I, \bm \alpha/\widehat {\bm \alpha}_{n,k_n}) \in A_{\delta,I}'$, which happens with probability tending to 1 by Eq.~\eqref{eq:consistency-orderstats-alpha}, we obtain from \eqref{eq:mvt} that
	\begin{align*}
	D_2 ={}&  \sup_{\bm v \in \partial B_1^+(\bm 0), p \in K} \left| C_n(\bm v_I,p) 
	+ \sum\nolimits_{i \in I} c_{s_i}(\bm v_I, \bm 1_I,\bm  1, p) G^0(\bm 1_{\{i\}})  -
	c_{\beta_i}(\bm v_I, \bm 1_I, \bm 1, p)\bm \alpha_i\widetilde H_i \right| \\
	\leq{}& \sum_{i \in I} \sup_{\bm v \in \partial B_1^+(\bm 0), p \in K}   
	\left| c_{s_i}(\bm v_I, \bm \xi_n,\bm \psi_n, p) \sqrt{k} \left( \frac{u_n}{\bm X^*_{k_n:n,i}} - 1 \right) + c_{s_i}(\bm v_I, \bm 1_I, \bm 1, p) G^0(\bm 1_{\{i\}}) \right|  \\
	& \ + \,\sum_{i \in I} \sup_{\bm v \in \partial B_1^+(\bm 0), p \in K} \left| c_{\beta_i}(\bm v_I, \bm \xi_n,\bm  \psi_n, p) \sqrt{k} \left( \frac{\bm\alpha_i}{\widehat{\bm \alpha}_{n,k_n,i}} - 1 \right) - c_{\beta_i}(\bm v_I, \bm 1_I, \bm 1, p) \bm \alpha_i\widetilde H_i \right| \\
	\leq{}& \sum_{i \in I} \Bigg( \sup_{\bm v \in B_1^+(\bm 0), (\bm s,\bm \beta) \in A_{\delta,I}', p \in K}
	|c_{s_i}(\bm v_I, \bm s,\bm \beta, p)| \cdot \left| \sqrt{k} \left( \frac{u_n}{\bm X^*_{k:n,i}} - 1 \right) + G^0(\bm 1_{\{i\}}) \right| \\
	& \qquad \quad + \sup_{\bm v \in B_1^+(\bm 0), p \in K} 
	\left| c_{s_i}\left(\bm v_I, \bm \xi_n, p\right)
	-  c_{s_i}(\bm v_I, \bm 1_I, p) \right| \cdot |G^0(\bm 1_{\{i\}})|\\
	& + \sup_{\bm v \in B_1^+(\bm 0), (\bm s,\beta) \in A_{\delta,I}', p \in K}
	|c_{\beta_i}(\bm v_I, \bm s,\bm \beta, p)| \cdot \left| \sqrt{k} \left( \frac{\bm\alpha_i}{\widehat{\bm \alpha}_{n,k_n,i}} -  1 \right)  -\bm \alpha_i \widetilde H_{I} \right| \\
	& + \sup_{\bm v \in B_1^+(\bm 0), p \in K} 
	\left| c_{\beta_i}\left(\bm v_I, \bm \xi_n, \bm \psi_n, p\right)
	-  c_{\beta_i}(\bm v_I, \bm 1_I, \bm \beta, p) \right| \cdot |\bm \alpha_i \widetilde H_{i}|  \Bigg) \\
	=:{}& \sum\nolimits_{i \in I} (D_{3,i} \cdot D_{4,i} + D_{5,i} \cdot D_{6,i}
	+ D_{3,\beta,i} \cdot D_{4,\beta,i} +  D_{5,\beta,i} \cdot D_{6,\beta,i}).         
	\end{align*}	
	Since $c_{s_i}$ and $c_{\beta_i}$, $i \in I$,  are continuous on the compact set $\partial B_1^*(\bm 0) \times A_{\delta,I}' \times K$, we have $D_{3,i} < \infty$, $i \in I$, and $D_{3,\beta,i} < \infty$ while
	$D_{4,i} \to_p 0$, $i \in I$ and $D_{4,\beta,i} \to_p 0$ by Lemma \ref{lem:orderstats} and Thm.~\ref{thm:an-hill}, respectively. Furthermore, we have 
	$$ \sup_{\bm v \in B_1^+(\bm 0), p \in K} 
	\left\| \left( \begin{array}{c} \bm \xi_n\\ \bm \psi_n \end{array}\right) -
	\left( \begin{array}{c}	\bm 1_I  \\ \bm 1_I     \end{array}\right)\right\| 
	\leq {}
	\left\| \left( \begin{array}{c} \frac{u_n}{\bm X^*_{k_n:n}} \circ \bm 1_I\\ \frac{\bm\alpha}{\widehat{\bm \alpha}_{n,k_n}} \circ \bm 1_I\end{array}\right) - \left( \begin{array}{c}
	\bm 1_I \\ \bm 1_I \end{array} \right)\right\| \to_p 0$$
	by \eqref{eq:consistency-orderstats-alpha}. Consequently, using the uniform continuity of $c_{s_i}$ and $c_{\beta_i}$, $i \in I$, we obtain that $D_{5,i} \to_p 0$ and $D_{5,\beta,i} \to_p 0$.
	The facts that $D_{6,i} < \infty$ a.s., $i \in I$, and $D_{6,\beta,i} < \infty$ a.s.\ finally yield
	$$ D_2 =\sup_{\bm v \in \partial B_1^+(\bm 0), p \in K} \left| C_n(\bm v_I,p) 
	+ \sum_{i \in I} c_{s_i}(\bm v_I, \bm 1_I, \bm 1, p) G^0(\bm 1_{\{i\}})  -
	c_{\beta_i}(\bm v_I, \bm 1_I, \bm 1, p) \alpha_i\widetilde H_i \right|  \to_p 0.$$
\end{proof}

\section{Further Analysis of the Precipitation Data Considered in Subsection \ref{subsec:precip}}\label{subsec:precipE}
In this section, we further analyze the precipitation data in the fall season in different regions in France as considered in Subsection \ref{subsec:precip}. First, we estimate the pairwise upper tail dependence coefficients
$$ \chi_{ij} = \lim_{u \to \infty} \PP(X_i > u \mid X_j > u), \quad 1 \leq i,j \leq 9 $$
empirically via
$$\widehat \chi_{ij}(u_i,u_j) = \frac{\sum_{l=1}^n \mathds 1 \{X_{li} > u_i, X_{lj} > u_j\}}{\sum_{l=1}^n \mathds 1 \{X_{lj} > u_j\}}, \quad 1 \leq i,j \leq 9 $$
for large thresholds $u_1,\ldots,u_9$. Here, we choose the componentwise empirical $95\,\%$-quantiles as thresholds. The results are displayed in Figure \ref{fig:etdc}. Here, the stations within the same region exhibit moderate dependence with empirical tail dependence coefficients approximately between 0.4 and 0.6, while all pairs of stations in different regions have empirical tail dependence less than 0.2, i.e., there is only weak extremal dependence across different regions. However, the three regions cannot be assumed to be asymptotically independent. In particular, there is still some weak dependence between the two regions in the northwest and the northeast. 

\begin{figure}
	\centering \includegraphics[width=0.5\textwidth]{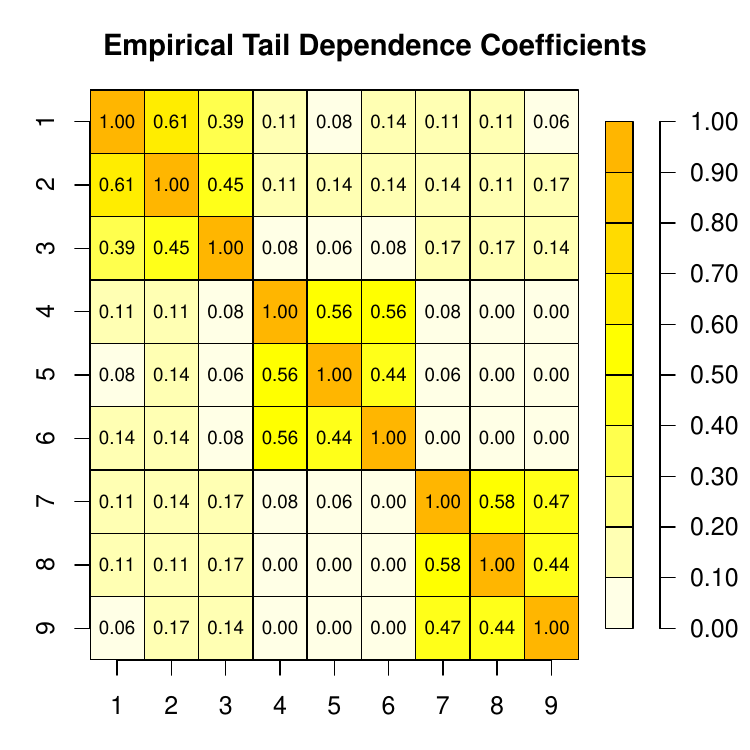}
	\caption{The empirical pairwise tail dependence coefficients $\widehat \chi_{ij}(u_i,u_j)$, $1 \leq i,j \leq j$ based on the empirical $95\,\%$-quantiles as thresholds. Here stations 1--3 belong to the region in the northwest, stations 4--6 to the region in the south and stations 7--9 to the region in the northeast of France.} \label{fig:etdc}
\end{figure}
Furthermore, as detailed in Subsection \ref{subsec:precip}, we apply our estimator for unknown margins $\widehat{\tau}_{I,n}^{MU}$ separately to each of the three regions. While the results in Table \ref{tab:application_regions} are obtained for fixed $k=40$, we also analyzed the effect of the choice of $k$. For different $k$ from $k=8$ to $k=180$ (corresponding marginal to $1\,\%$--$25\,\%$ of the observations), the results are shown in Figure \ref{fig:application-k}.
In all cases, the estimated extremal coefficients grow slightly, i.e.\ dependence weakens, as the threshold grows (which means that $k$ gets smaller), a phenomenon that is often observed in environmental data. This effect is even a bit more pronounced for the region in the south of France.
\begin{figure}
	\centering \includegraphics[width=0.9\textwidth]{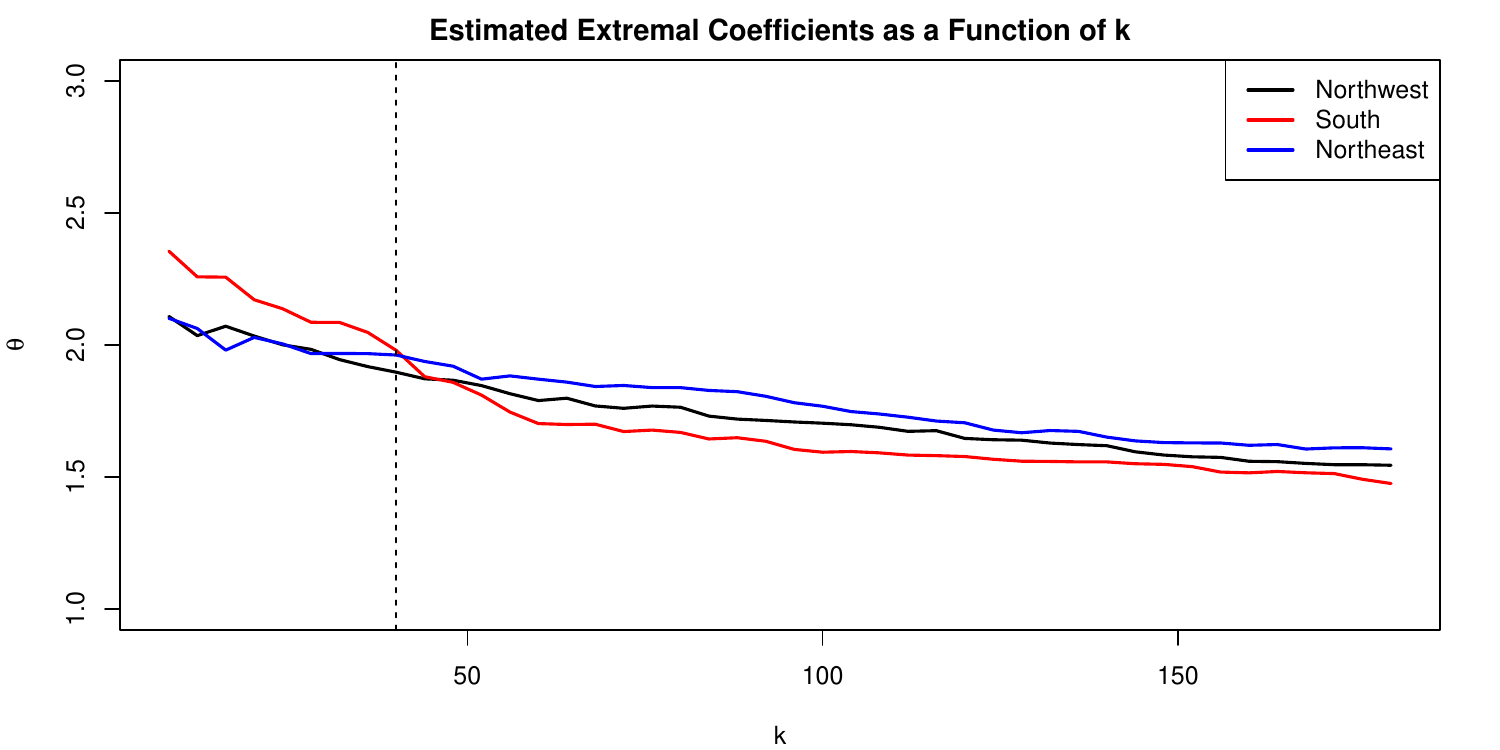}
	\caption{The estimated extremal coefficients $1/\widehat{\tau}_{I,n}^{MU}$ for the three regions in the northwest, the south and the northeast of France and different choices of $k$.} \label{fig:application-k}
\end{figure}

\end{document}